\DeclareSymbolFont{cyrletters}{OT2}{wncyr}{m}{n}
\DeclareMathSymbol{\Sha}{\mathalpha}{cyrletters}{"58}
\newtheorem{thm}{Theorem}[section]
\newtheorem{lemma}[thm]{Lemma}
\newtheorem{cor}[thm]{Corollary}
\newcommand{\Selp}{\mathrm{Sel}_{p^\infty}}
\newcommand{\Sel}{\mathrm{Sel}}
\newcommand{\QQ}{\mathbb{Q}}
\newcommand{\Kc}{K^{\mathrm{cyc}}}
\newcommand{\Lc}{L^{\mathrm{cyc}}}
\newcommand{\vc}{{v_{\mathrm{cyc}}}}
\newcommand{\vcc}{{v_\infty}}
\newcommand{\vlc}{{v_{L^{\mathrm{cyc}}}}}
\newcommand{\Kcc}{K_\infty}
\newcommand{\Fp}{\mathbb{F}_p}
\newcommand{\MHG}{\mathfrak{M}_H(G)}
\newcommand{\NHG}{\mathfrak{N}_H(G)}
\newcommand{\ZZ}{\mathbb{Z}}
\newcommand{\Zp}{\mathbb{Z}_p}
\newcommand{\Qp}{\mathbb{Q}_p}
\newcommand{\Gal}{\operatorname{Gal}}
\newcommand{\rk}{\mathrm{rk}}
\newcommand{\GL}{\mathrm{GL}}
\newcommand{\Ker}{\mathrm{Ker}}
\newcommand{\Tor}{\mathrm{Tor}}
\newcommand{\X}{X(E/K_{\infty})}
\newcommand{\Ext}{\mathrm{Ext}}
\newcommand{\Hom}{\mathrm{Hom}}
\newcommand{\Lie}{\mathrm{Lie}}
\newcommand{\vi}{\mathrm{v}}
\newcommand{\Coker}{\mathrm{Coker}}
\newcommand{\bs}{[\hspace{-0.04cm}[}
\newcommand{\js}{]\hspace{-0.04cm}]}
\newtheorem{pro}[thm]{Proposition}
\newtheorem{lem}[thm]{Lemma}
\newtheorem{con}[thm]{Conjecture}
\newtheorem{hyp}{Hypothesis}
\newtheorem{df}[thm]{Definition}
\newtheorem*{example}{Example}
\newtheorem*{remark}{Remark}
\newif\ifapx
\newif\ifkron
\newif\ifexample
\title{Algebraic functional equations and completely faithful Selmer groups}
\author{T.\ Backhausz and G.\ Z\'abr\'adi\footnote{The second author was partially supported by OTKA Research grant no.\ K-101291.}}
\begin{document}
\maketitle

\begin{abstract}
Let $E$ be an elliptic curve---defined over a number field $K$---without complex multiplication and with good ordinary reduction at all the primes above a rational prime $p\geq 5$. We construct a pairing on the dual $p^\infty$-Selmer group of $E$ over any strongly admissible $p$-adic Lie extension $\Kcc/K$ under the assumption that it is a torsion module over the Iwasawa algebra of the Galois group $G=\Gal(\Kcc/K)$. Under some mild additional hypotheses this gives an algebraic functional equation of the conjectured $p$-adic $L$-function. As an application we construct completely faithful Selmer groups in case the $p$-adic Lie extension is obtained by adjoining the $p$-power division points of another non-CM elliptic curve $A$.
\end{abstract}

\section{Introduction}

%The following is an extension of Theorem 5.2 from \cite{Z} which covers the special case when $A=E$.

%Let $P_0$ be the set of primes in $\Kc$ not above $p$ where the extension $\Kcc/\Kc$ is ramified. Note that this is the same as the set of primes in $\Kc$ not above $p$  where $A$ has potentially multiplicative reduction.

The main conjectures of Iwasawa theory usually state that $(i)$ there
exists a $p$-adic $L$-function attached to the elliptic curve over
a $p$-adic Lie extension of $\mathbb{Q}$ which interpolates the
special values of the complex $L$-functions of the elliptic curve
twisted by Artin representations of the Galois group, and $(ii)$, this $p$-adic $L$-function
is a characteristic element for the dual of the Selmer group.
These are the only tools known at present for studying the
mysterious relationship between the arithmetic
properties of elliptic curves and the special values of their
complex $L$-functions, especially for attacking the conjectures of
Birch and Swinnerton-Dyer. In the noncommutative setting of \cite{CFKSV} the $p$-adic $L$-function lies in the
algebraic $K_1$-group of $\Lambda(G)_{S^*}$, the Iwasawa algebra
of the Galois group localized by a canonical Ore set.

Let $E$ be an elliptic curve without complex multiplication defined over a number field $K$ and $p\geq 5$ be a prime such that $E$ has good ordinary reduction at all the primes above $p$ in $K$. The aim of this paper is to prove an algebraic functional equation for the conjectured $p$-adic $L$-function over arbitrary strongly admissible $p$-adic Lie extensions $\Kcc$. This generalizes earlier results of the second author in case $K=\mathbb{Q}$ and $\Kcc$ being the false Tate curve extension \cite{ZfT} and the $\GL_2$-extension associated to $E$ \cite{Z}.

This ``algebraic functional equation'' is on the level of the class of dual Selmer groups in the Grothendieck group $K_0(\MHG)$ of a certain category $\MHG$ of (finitely generated left) $\Lambda(G)$-modules where $\Lambda(G)$ is the Iwasawa algebra of the Galois group $G=\Gal(\Kcc/K)$. Here the objects of the category $\MHG$ are those $\Lambda(G)$ modules $M$ for which $M/M(p)$ is finitely generated over the smaller Iwasawa algebra $\Lambda(H)$ where $H=\Gal(\Kcc/\Kc)$. Let us denote by $\#$ the anti-involution on $\Lambda(G)$ induced by the inversion map $\#\colon G\ni g\mapsto g^{-1}\in G$. For an left $\Lambda(G)$-module $M$ we denote by $M^\#$ the right $\Lambda(G)$-module on the same underlying abelian group as $M$ obtained by the rule $m\lambda:=\lambda^\# m$ for $m\in M$ and $\lambda\in\Lambda(G)$. Now if $M$ lies in the category $\MHG$ then 
\begin{eqnarray*}
\#\colon K_0(\MHG)&\to& K_0(\MHG)\\ 
{[M]}&\mapsto& \sum_{i=1}^{\dim G+1}[\Ext^i_{\Lambda(G)}(M^{\#},\Lambda(G))]
\end{eqnarray*}
is an involution on the Grothendieck group $K_0(\MHG)$. Here we regard $\Ext^i_{\Lambda(G)}(M^{\#},\Lambda(G))$ as a left module over $\Lambda(G)$ via the left multiplication of $\Lambda(G)$ on itself. By an algebraic functional equation for a module $M$ we mean a relation involving the classes $[M]$ and $\#([M])$.

In the classical case when $\Kcc$ is the cyclotomic $\Zp$-extension of a number field $L$ Perrin-Riou \cite{P} constructed a $\Lambda(\Gamma_L)$-homomorphism $X(E/\Lc)\to\Ext^1_{\Lambda(\Gamma_L)}(X(E/\Lc)^{\#},\Lambda(\Gamma_L))$ as the projective limit of the Cassels-Tate-Flach pairing (here $\Gamma_L=\Gal(\Lc/L)$). By taking projective limit of these maps for varying $K\leq L\leq \Kcc$ we obtain a map $$\varphi : X(E/\Kcc) \to \Ext^1(X(E/\Kcc)^\#,\Lambda(G))\ .$$ Our first Theorem (Thm.\ \ref{altalanostetel}) is a careful analysis of the kernel and cokernel of this map. We show that the kernel of $\varphi$ has trivial class in the Grothendieck group $K_0(\MHG)$. On the other hand, the cokernel equals---up to error terms with vanishing class in $K_0(\MHG)$---the direct sum of certain local factors $\Lambda(G)\otimes_{\Lambda(G_{v_\infty})}T_p(E)^*$ for primes $v\nmid p$ in $K$ ramifying infinitely in $\Kcc$ such that the Tate module $T_p(E)$ is defined over the completion $K_{\infty,\vcc}$ of $K_\infty$ at a (once and for all) fixed prime $v_\infty$ in $K_\infty$ above $v$. We denote by $P_1$ (resp.\ $P_2$) the set of those primes $v$ having the above property that are potentially multiplicative (resp.\ potentially good) for the curve $E$. 

In order to obtain a functional equation for $\X$ we also have to deal with the higher extension groups $\Ext^i_{\Lambda(G)}(\X^{\#},\Lambda(G))$ ($i\geq 2$). In section \ref{higherext} we show that the class of these extension groups vanishes in $K_0(\MHG)$ under certain local conditions at primes above $p$ only. Moreover, if $G$ is isomorphic to an open subgroup of $\GL_2(\Zp)$ then we do not need to assume these local conditions. So under these assumptions we obtain a functional equation of the characteristic element of $\X$---the conjectured $p$-adic $L$-function---in the $K_1$ of the localized Iwasawa algebra.

In section \ref{compatible} we show the compatibility of this functional equation with the conjectured interpolation property of the $p$-adic $L$-function. Note that the local factor $\alpha_v\in K_1(\Lambda(G)_S)$ appearing in the functional equation is the characteristic element of $T_p(E)^*$ as a module over the local Iwasawa algebra $\Lambda(G_{\vcc})$. When one substitutes an Artin representation $\rho$ of $G$ into $\alpha_v$ the value we get is the quotient $\frac{L_v(E,\rho^*,1)}{L_v(E,\rho,1)}$ of local $L$-values up to an epsilon factor.  Here the difficulty is that while in the interpolation property all the local $L$-factors at primes ramifying infinitely in $\Kcc$ are removed, in the algebraic functional equation only those show up for which the Tate module $T_p(E)$ is defined over the completed field $K_{\infty,\vcc}$. However, one can resolve this seeming contradiction can be resolved easily if one passes to a finite prime-to-$p$ extension $F_\infty$ of $\Kcc$ over which $T_p(E)$ is defined after completion at a prime $w_\infty$ dividing $\vcc$. In the functional equation of the characteristic element of $X(E/F_\infty)$ the local factors $\alpha_v$ do appear, however these elements map to $0$ under the composite map $$K_1(\Lambda(\Gal(F_\infty/K))_{S_F^*})\overset{\pi}{\to} K_1(\Lambda(G)_{S^*})\overset{\partial_G}{\to}K_0(\MHG)$$
therefore they do not appear in the functional equation over $\Kcc$ even though $\pi(\alpha_v)$ still interpolates quotients of local $L$-values at $v$ of twists of $E$ by Artin representations factoring through $G$.

The rest of the paper is devoted to the case when $G\cong H\times Z$ is a pro-$p$ group such that the Lie algebra of $H$ is split semisimple over $\mathbb{Q}_p$ and $Z=Z(G)\cong\Zp$ is the centre of $G$. For instance the group $\GL_2(\Zp)$ has a system of open neighbourhoods of the identity consisting of subgroups $G$ having this property. In this case we show that whenever the set of primes $P_1\cup P_2$ is nonempty then the dual Selmer $\X$ is not annihilated by any element in the centre of $\Lambda(G)$. Therefore by a result of Ardakov \cite{A} its global annihilator in $\Lambda(G)$ also vanishes. Moreover, if $\X$ is further assumed to have $\Lambda(H)$-rank $1$ then its image $q(\X)$ in the quotient category by the full subcategory of pseudonull modules is completely faithful (that is all the subquotients of $\X$ are pseudonull or have trivial global annihilator). Finally we give an example of two curves $E$ and $A$ such that in case $\Kcc=\mathbb{Q}(A[5^{\infty}])$ all these assumptions are satisfied therefore $q(\X)$ is completely faithful. As previous work of the first author \cite{B} shows we cannot take $E=A$ since in this case $\X$ always has $\Lambda(H)$-rank bigger than $1$.

\subsection{Notation}\label{not}

$\Kcc$ will always mean a strongly admissible extension of a number field $K$, in the sense that
\begin{enumerate}
\item $\Kcc$ contains $K(\mu_{p^\infty})$.
\item $\Kcc/K$ is unramified outside a finite set of places of $K$.
\item $\Gal(\Kcc/K)$ is a $p$-adic Lie group of dimension at least $2$
\item $\Gal(\Kcc/K)$ has no elements of order $p$
\end{enumerate}
We make the following definitions.\\
\begin{tabular}{c c}
\begin{tabular}{r c p{5cm}}
$G_L$ & $=$ & $\Gal(\Kcc/L)$ \\
$H_L$ & $=$ &  $\Gal(\Kcc/\Lc)$ \\
$\Gamma_L$& $=$ & $\Gal(\Lc/L)$ \\
$\Gamma_L^\ast$& $=$ & $\Gal(\Lc/K)$ \\
~&~&~\\
$M^\vi$ & $=$ & $\Hom_\text{cont} (M,\Qp/\Zp)$\\
$M^\ast$ & $=$ & $\Hom_\text{cont}(M,\Zp)$\\
\end{tabular}
\begin{tabular}{r c p{5cm}}
$G$ & $=$ & $\Gal(\Kcc/K)$ \\
$H$ & $=$ &  $\Gal(\Kcc/\Kc)$ \\
$\Gamma$& $=$ & $\Gal(\Kc/K)$ \\
~&~&~\\
~&~&~\\
$M^\#$ & $=$ & opposite module of $M$\\
$a_R^i(M)$ & $=$ & $\Ext^i_R(M,R)$\\
\end{tabular}
\end{tabular}\\

Moreover, we are going to denote by $v$ the primes in $K$ and by $q_v:=N_{K/Q}(v)$ their absolute norm. Further, $\vc,v_L,\vlc,\vcc$ will denote primes above $v$ in $\Kc$, $L$, $\Lc$, and $\Kcc$, respectively. Moreover, if $F_1\leq F_2$ is a Galois extension and $w$ is a prime in the field $F_2$ then we denote by $\Gal(F_2/F_1)_w$ the decomposition subgroup of the Galois group $\Gal(F_2/F_1)$.

Let $\mathcal{G}$ be any $p$-adic Lie group without elements of order $p$ and with a closed normal subgroup $\mathcal{H}\lhd \mathcal{G}$ such that $\Gamma:=\mathcal{G}/\mathcal{H}\cong\mathbb{Z}_p$. We are going to need the special case when $\mathcal{G}$ is a finite index subgroup of $\Gal(\mathbb{Q}(E[p^\infty])/\mathbb{Q})$ and also in the case when $\mathcal{G}\cong\mathbb{Z}_p$. The former embeds into $\GL_2(\mathbb{Z}_p)$ once we choose a $\mathbb{Z}_p$-basis of $T_p(E)$. We denote by $\Lambda(\mathcal{G})$ the Iwasawa $\mathbb{Z}_p$-algebra of $\mathcal{G}$ and by $\Omega(\mathcal{G})$ its $\mathbb{F}_p$-version.

Let $S$ be the set of all $f$ in $\Lambda(\mathcal{G})$ such that $\Lambda(\mathcal{G})/\Lambda(\mathcal{G})f$ is a finitely generated $\Lambda(\mathcal{H})$-module and
\begin{equation*}
S^*=\bigcup_{n\geq 0}p^nS.
\end{equation*}
These are multiplicatively closed (left and right) Ore sets of $\Lambda(\mathcal{G})$ \cite{CFKSV}, so we can define $\Lambda(\mathcal{G})_S$, $\Lambda(\mathcal{G})_{S^*}$ as the localizations of $\Lambda(\mathcal{G})$ at $S$ and $S^*$. We write $\mathfrak{M}_{\mathcal{H}}(\mathcal{G})$ for the category of all finitely generated $\Lambda(\mathcal{G})$-modules, which are $S^*$-torsion. A finitely generated left module $M$ is in $\mathfrak{M}_{\mathcal{H}}(\mathcal{G})$ if and only if $M/M(p)$ is finitely generated over $\Lambda(\mathcal{H})$ \cite{CFKSV}. We write $K_0(\mathfrak{M}_{\mathcal{H}}(\mathcal{G}))$ for the Grothendieck group of the category $\mathfrak{M}_{\mathcal{H}}(\mathcal{G})$. Similarly, let $\mathfrak{M}(\mathcal{G},p)$ denote the category of $p$-power-torsion finitely generated $\Lambda(\mathcal{G})$-modules and $\mathfrak{N}_{\mathcal{H}}(\mathcal{G})$ the category of $\Lambda(\mathcal{G})$-modules that are finitely generated over $\Lambda(\mathcal{H})$. 

\begin{lem}\label{ptors}
Assume in addition that $\mathcal{G}$ is a pro-$p$ group. Then we have $K_0(\mathfrak{M}_{\mathcal{H}}(\mathcal{G}))=K_0(\mathfrak{M}(\mathcal{G},p))\oplus K_0(\mathfrak{N}_{\mathcal{H}}(\mathcal{G}))$.
\end{lem}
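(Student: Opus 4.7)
The plan is to realize the asserted decomposition through the natural sum-of-inclusions map
\[
\alpha\colon K_0(\mathfrak{M}(\mathcal{G},p))\oplus K_0(\mathfrak{N}_{\mathcal{H}}(\mathcal{G}))\to K_0(\mathfrak{M}_{\mathcal{H}}(\mathcal{G})),\quad([A],[B])\mapsto[A]+[B],
\]
and to construct an explicit two-sided inverse $\beta([M]):=([M(p)],[M/M(p)])$. Surjectivity of $\alpha$ is essentially immediate: for $M\in\mathfrak{M}_{\mathcal{H}}(\mathcal{G})$, Noetherianness of $\Lambda(\mathcal{G})$ forces the ascending union $M(p)=\bigcup_n M[p^n]$ to stabilise, so $M(p)=M[p^N]$ for some $N$ and lies in $\mathfrak{M}(\mathcal{G},p)$; the characterisation of $\mathfrak{M}_{\mathcal{H}}(\mathcal{G})$ recalled just before the lemma then gives $M/M(p)\in\mathfrak{N}_{\mathcal{H}}(\mathcal{G})$, and the short exact sequence $0\to M(p)\to M\to M/M(p)\to 0$ writes $[M]=\alpha([M(p)],[M/M(p)])$.

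To show $\beta$ is well-defined, I would apply the snake lemma for multiplication by powers of $p$ to a short exact sequence $0\to M'\to M\to M''\to 0$ in $\mathfrak{M}_{\mathcal{H}}(\mathcal{G})$ and pass to the limit to produce
\[
0\to M'(p)\to M(p)\to M''(p)\to M'/M'(p)\to M/M(p)\to M''/M''(p)\to 0.
\]
The single connecting object $K:=\mathrm{coker}(M(p)\to M''(p))=\ker(M'/M'(p)\to M/M(p))$ is both $p$-power torsion and finitely generated over $\Lambda(\mathcal{H})$, so it lies in the intersection $\mathfrak{M}(\mathcal{G},p)\cap\mathfrak{N}_{\mathcal{H}}(\mathcal{G})$. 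Splitting the six-term sequence into two genuine short exact sequences, one inside each Serre subcategory, yields
\[
\beta([M])-\beta([M'])-\beta([M''])=\bigl(-[K],\,[K]\bigr),
\]
so $\beta$ is well-defined precisely when $[K]$ vanishes in both $K_0(\mathfrak{M}(\mathcal{G},p))$ and $K_0(\mathfrak{N}_{\mathcal{H}}(\mathcal{G}))$ for every such $K$.

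This cancellation is the point at which the pro-$p$ hypothesis is essential. Under it, both $\Omega(\mathcal{G})$ and $\Lambda(\mathcal{H})$ are local Noetherian rings of finite global dimension, so every finitely generated module admits a finite resolution by finite free modules and the corresponding $K_0$'s are $\mathbb{Z}$, detected by the Euler characteristic. A dévissage along the $p$-filtration identifies $K_0(\mathfrak{M}(\mathcal{G},p))$ with $K_0$ of finitely generated $\Omega(\mathcal{G})$-modules; a module in the intersection is finitely generated over the strictly smaller subring $\Omega(\mathcal{H})\subset\Omega(\mathcal{G})$, is therefore $\Omega(\mathcal{G})$-torsion, and hence has vanishing Euler characteristic. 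The corresponding vanishing in $K_0(\mathfrak{N}_{\mathcal{H}}(\mathcal{G}))$ uses that such a $K$ is $\Lambda(\mathcal{H})$-torsion (since $p$ is regular in $\Lambda(\mathcal{H})$) combined with finite free $\Lambda(\mathcal{H})$-resolutions; this second half is the delicate step I expect to be the main obstacle, because the $\Lambda(\mathcal{H})$-free resolutions do not immediately live inside $\mathfrak{N}_{\mathcal{H}}(\mathcal{G})$ and must be combined with a separate devissage for the $p$-filtration and with the $\Lambda(\mathcal{H})$-rank invariant in order to transport the Euler-characteristic argument back into the ambient Grothendieck group.
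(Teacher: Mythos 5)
Your overall scheme --- write down the sum-of-inclusions map $\alpha$, exhibit a candidate inverse $\beta([M])=([M(p)],[M/M(p)])$, and reduce well-definedness of $\beta$ to the vanishing of the class of a defect module $K\in\mathfrak{M}(\mathcal{G},p)\cap\mathfrak{N}_{\mathcal{H}}(\mathcal{G})$ --- is close in spirit to the paper's, and your dévissage/Euler-characteristic argument for the vanishing of $[K]$ in $K_0(\mathfrak{M}(\mathcal{G},p))$ is exactly the paper's observation phrased differently (the paper says it via the $\mu$-invariant and the Ardakov--Wadsley theorem). But there are two real problems. The first is a technical error: the six-term sequence
\begin{equation*}
0\to M'(p)\to M(p)\to M''(p)\to M'/M'(p)\to M/M(p)\to M''/M''(p)\to 0
\end{equation*}
is not exact, and there is no connecting map of the kind you assert; for $0\to\mathbb{Z}_p\xrightarrow{p}\mathbb{Z}_p\to\mathbb{Z}/p\to 0$ it would read $0\to 0\to 0\to\mathbb{Z}/p\to\mathbb{Z}_p\xrightarrow{p}\mathbb{Z}_p\to 0\to 0$, which fails at both middle spots. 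In particular your identification $\mathrm{coker}(M(p)\to M''(p))=\ker\bigl(M'/M'(p)\to M/M(p)\bigr)$ is false in general --- that kernel is always zero, since $M'(p)=M'\cap M(p)$. (The snake lemma for $\times p^n$ with $n$ large gives a genuine six-term sequence, but its last three terms are $M'/p^nM'$, $M/p^nM$, $M''/p^nM''$, not the torsion-free quotients.) The formula $\beta([M])-\beta([M'])-\beta([M''])=(-[K],[K])$ with $K:=\mathrm{coker}(M(p)\to M''(p))$ \emph{is} correct, but to obtain it you should instead set $I=\mathrm{image}(M(p)\to M''(p))$ and use the three honest short exact sequences $0\to M'(p)\to M(p)\to I\to 0$, $0\to M'/M'(p)\to M/M(p)\to M''/I\to 0$, and $0\to K\to M''/I\to M''/M''(p)\to 0$.

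The second problem is the gap you yourself flag: you never establish $[K]=0$ in $K_0(\mathfrak{N}_{\mathcal{H}}(\mathcal{G}))$, and the route you sketch cannot work as stated, since a finite free $\Lambda(\mathcal{H})$-resolution of $K$ carries no $\Lambda(\mathcal{G})$-action, hence does not live in $\mathfrak{N}_{\mathcal{H}}(\mathcal{G})$, and the Euler-characteristic argument has no place to run. The paper avoids this entirely: it constructs only the one retraction $\beta_1\colon[M]\mapsto[M(p)]$, whose well-definedness needs only the vanishing in $K_0(\mathfrak{M}(\mathcal{G},p))$ that you do prove. Because $\beta_1$ is the identity on the image of $K_0(\mathfrak{M}(\mathcal{G},p))$ and annihilates the image of $K_0(\mathfrak{N}_{\mathcal{H}}(\mathcal{G}))$ (modules finitely generated over $\Lambda(\mathcal{H})$ also have $\mu=0$), this single retraction, combined with the surjectivity of $\alpha$ which you already have, forces the direct sum decomposition. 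So the step you correctly identify as ``the main obstacle'' is in fact unnecessary; you should not try to make $\beta_2$ well-defined, but instead argue with $\beta_1$ alone as the paper does.
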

\begin{proof}
By definition any module in $\mathfrak{M}_{\mathcal{H}}(\mathcal{G})$ is an extension of a module in $\mathfrak{M}(\mathcal{G},p)$ and a module in $\mathfrak{N}_{\mathcal{H}}(\mathcal{G})$. Hence we have $K_0(\mathfrak{M}_{\mathcal{H}}(\mathcal{G}))=K_0(\mathfrak{M}(\mathcal{G},p)) + K_0(\mathfrak{N}_{\mathcal{H}}(\mathcal{G}))$. Let $M$ and $N$ be $\Lambda(G)$-modules as above. Now we claim that the map $[M]\mapsto [M(p)]$ is well defined and extends to a homomorphism $K_0(\mathfrak{M}_{\mathcal{H}}(\mathcal{G}))\to K_0(\mathfrak{M}(\mathcal{G},p))$. For this let 
\begin{equation*}
0\to A\to B\to C\to 0
\end{equation*}
be a short exact sequence in $\mathfrak{M}_{\mathcal{H}}(\mathcal{G})$. Then we have $\mu(B)=\mu(A)+\mu(C)$ for their $\mu$-invariants (as $\Lambda(G)$-modules) since $p$-power-torsion $\Lambda(G)$-modules that are finitely generated over $\Lambda(H)$ (ie.\ modules in $\mathfrak{M}(\mathcal{G},p)\cap \mathfrak{N}_{\mathcal{H}}(\mathcal{G})$) clearly have trivial $\mu$-invariant. Here the $\mu$-invariant $\mu(M)$ of a finitely generated $\Lambda(G)$-module is defined by $\sum_{j=0}^{\infty}\rk_{\Omega(\mathcal{G})}(p^jM(p)/p^{j+1}M(p))$. Hence we have $[B(p)]=[A(p)]+[C(p)]$ in $K_0(\mathfrak{M}(\mathcal{G},p))$ by the main theorem of \cite{AW} applied to pro-$p$ groups. The statement follows using again that modules in $\mathfrak{N}_{\mathcal{H}}(\mathcal{G})$ have trivial $\mu$-invariant hence the homomorphism constructed is zero on $K_0(\mathfrak{N}_{\mathcal{H}}(\mathcal{G}))$.
\end{proof}

Further, if $M$ is a left $\Lambda(\mathcal{G})$-module, then by $M^{\#}$ we denote the right module defined on the same underlying set with the action of $\Lambda(\mathcal{G})$ via the anti-involution $\#=(\cdot)^{-1}$ on $\mathcal{G}$, i. e. for an $m$ element in $M$ and $g$ in $G$, and the right action is defined by $mg:=g^{-1}m$. By extending the right multiplication linearly to the whole Iwasawa algebra we get $mx=x^{\#}m$.

\subsection{Elliptic curves over strongly admissible extensions}
Let $E$ be an elliptic curve without complex multiplication. 
We are interested in the $\Lambda(G)$-module $X(E/\Kcc)=\Selp(E/\Kcc)^\vi$, which is the Pontryagin dual of the $p^\infty$-Selmer group of $E$ over $\Kcc$. This is the subject of the non-commutative main conjecture in the Iwasawa theory of elliptic curves \cite{CFKSV}. It is conjectured that $X(E/K_{\infty})$ always lies in $\MHG$ provided that $E$ has good ordinary reduction at $p$.  The following is a positive result in this direction.

\begin{df}
For a finite or infinite field extension $L \ge K$, let $R(L)$ be the set of primes in $L$ where the extension $\Kcc/L$ is infinitely ramified and $P_0(L)$ the subset of $R(L)$ consisting of primes not dividing $p$. For each prime $v\in L$ choose a prime $v_{\mathrm{cyc}}$ (resp.\ $v_\infty$) above $v$ in $\Lc$ (resp.\ $\Kcc$).
\begin{align*}
P_1(L)&=\left\{v \in P_0(L) | E \text{~has~split~multiplicative~reduction~at~}v_\infty\right\}\\
P_2(L)&=\left\{v \in P_0(L) | E \text{~has~good~reduction~at~} v_\infty \mathrm{~and~} E(\Lc_{\vc})[p] \neq 0\right\}
\end{align*}
$P_i(K)$ (resp.\ $R(K)$) will be denoted by $P_i$ (resp.\ $R$) for convenience.
\end{df}

\begin{thm}[{\cite[Theorem 5.4]{HS}}]\label{hachimoriformula}
Let $\Kcc/K$ be a strongly admissible pro-$p$ $p$-adic Lie extension. Let $E$ be an elliptic curves defined over $K$ that has good ordinary reduction at $p$. Assume that $X(E/\Kc)$ is finitely generated over $\Zp$. Then $X(E/\Kcc)$ is finitely generated over $\Lambda(H)$ (i.e. it is in $\NHG$) and 
$$\rk_{\Lambda(H)} X(E/\Kcc) = \rk_{\Zp} X(E/\Kc) + |P_1(\Kc)|+2|P_2(\Kc)|\ .$$
\end{thm}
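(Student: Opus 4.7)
The plan is to apply a Mazur-style control theorem relating the Selmer groups over $\Kc$ and $\Kcc$. Writing $H=\Gal(\Kcc/\Kc)$, I would first study the restriction map
\begin{equation*}
r\colon \Selp(E/\Kc) \to \Selp(E/\Kcc)^{H},
\end{equation*}
placing it in a four-term exact sequence whose kernel lies in $H^1(H,E(\Kcc)[p^\infty])$ and whose cokernel is a direct sum over $v\in R(\Kc)$ of local terms. The global term is finite since $E$ is non-CM, so $E(\Kcc)[p^\infty]$ is finite and $H^1$ of the pro-$p$ group $H$ with finite coefficients is finite; in particular this term does not affect $\Zp$-ranks.

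The heart of the argument is the local analysis of the cokernel. At primes $v\mid p$ the good ordinary reduction hypothesis together with $\mu_{p^\infty}\subset \Kcc$ (part of strong admissibility) gives, via the formal-group filtration of $E[p^\infty]$, a local term of $\Zp$-corank zero. At primes $v\nmid p$ outside $P_1(\Kc)\cup P_2(\Kc)$ the local $H^1$ is finite, because either the inertia has no fixed $p$-torsion, the reduction is additive, or the ramification in $\Kcc/\Kc$ is finite. The nontrivial contributions come precisely from $P_1(\Kc)$ and $P_2(\Kc)$: at $v\in P_1(\Kc)$ the Tate uniformisation provides an $H$-stable exact sequence $0\to\mu_{p^\infty}\to E[p^\infty]\to \Qp/\Zp\to 0$ over $\Kcc_{\vcc}$, and a direct computation of $H^1$ of the decomposition subgroup gives local $\Zp$-corank $1$; at $v\in P_2(\Kc)$ the condition $E(\Lc_{\vc})[p]\neq 0$ combined with good reduction forces the pro-$p$ decomposition group to act trivially modulo $p$ on the full Tate module, yielding local corank $2$.

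Pontryagin-dualising the resulting exact sequence and using the hypothesis that $X(E/\Kc)$ is finitely generated over $\Zp$, one sees that $X(E/\Kcc)_{H}$ is finitely generated over $\Zp$ of rank $\rk_{\Zp}X(E/\Kc)+|P_1(\Kc)|+2|P_2(\Kc)|$. By the topological Nakayama lemma (applicable since $H$ is pro-$p$ with no element of order $p$) $X(E/\Kcc)$ is then finitely generated over $\Lambda(H)$, i.e.\ lies in $\NHG$. To pass from the rank of the coinvariants to the $\Lambda(H)$-rank itself, I would invoke the generalised Euler characteristic
\begin{equation*}
\rk_{\Lambda(H)}X(E/\Kcc)=\sum_{i=0}^{\dim H}(-1)^{i}\rk_{\Zp}H_i\bigl(H,X(E/\Kcc)\bigr),
\end{equation*}
which is additive on short exact sequences and agrees with the usual rank on free modules; showing that only the $H_0$ term survives would give the stated formula.

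The main obstacle is the final step: verifying that $H_i(H,X(E/\Kcc))$ for $i\geq 1$ contributes zero to the alternating sum. This requires pushing the Hochschild--Serre spectral sequence through the defining Selmer complex so that the higher cohomology of the local terms at primes in $R(\Kc)$ (especially those in $P_1(\Kc)\cup P_2(\Kc)$) cancels against the higher global cohomology. The key tools here are Tate local duality at each prime and a careful Euler-characteristic computation on the ordinary filtration at primes above $p$, as in the original Hachimori--Sharifi argument; once this is in place the formula $\rk_{\Lambda(H)}X(E/\Kcc)=\rk_{\Zp}X(E/\Kcc)_H$ follows and the theorem is proved.
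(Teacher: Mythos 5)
This theorem is not proved in the paper: it is quoted verbatim from Hachimori--Sharifi \cite[Theorem 5.4]{HS}, so there is no in-paper proof to compare against. Your proposal does outline the broad strategy that \cite{HS} (following Coates--Howson and Howson) actually uses, namely control-theorem descent through the fundamental diagram combined with an $H$-Euler characteristic computation, so the skeleton is on the right track.

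That said, there are two genuine problems. First, the claim that ``the global term is finite since $E$ is non-CM, so $E(\Kcc)[p^\infty]$ is finite'' is wrong: non-CM in no way forces $E(\Kcc)[p^\infty]$ to be finite. For instance, when $\Kcc=K(E[p^\infty])$ (precisely the $\GL_2$ case the paper is most interested in), $E(\Kcc)[p^\infty]=E[p^\infty]$ is infinite. The groups $H^i(H,E(\Kcc)[p^\infty])$ are then only cofinitely generated over $\Zp$, and their $\Zp$-coranks \emph{do} enter the Euler characteristic; one must track them through Poitou--Tate duality rather than dismissing them as finite. Second, the framing of the last step as ``showing that only the $H_0$ term survives'' is not how the rank formula is obtained. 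In general $H_i(H,X(E/\Kcc))$ does not vanish for $i\geq1$; rather, the full alternating sum $\sum_{i}(-1)^i\rk_{\Zp}H_i(H,\cdot)$ is computed term by term by pushing $H$-cohomology through the defining exact sequence of the Selmer group, using Shapiro's lemma and the Coates--Greenberg theory at $v\mid p$, and the local and global contributions balance to give $\rk_{\Zp}X(E/\Kc)+|P_1(\Kc)|+2|P_2(\Kc)|$. Your proposal acknowledges this is the hard part but does not close the gap, and the erroneous finiteness assumption would, if relied on, make the bookkeeping come out wrong in exactly the cases where $E[p^\infty]\subseteq\Kcc$.
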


\begin{remark}
The primes in $P_1\cup P_2$ are exactly those primes $v\in P_0$ for which all the $p$-power division points $E[p^\infty]$ are contained in the local tower extension $K_{\infty,\vcc}$ (Prop.\ 5.1 in \cite{HM}).
\end{remark}

\section{A pairing over strongly admissible extensions}

The following theorem is the key for establishing the functional equation of the characteristic element of $X(E/\Kcc)$.
\begin{thm}
\label{altalanostetel}
Let $E$ be an elliptic curve without complex multiplication having good ordinary reduction at all the primes above $p \ge 5$. 
Let $K$ be a number field and $\Kcc$ a strongly admissible extension. 
Assume that $X(E/\Kcc)$ has rank $0$ as a $\Lambda(G)$-module.

Then there is a map 
$$\varphi : X(E/\Kcc) \to \Ext^1(X(E/\Kcc)^\#,\Lambda(G))$$
with kernel representing $0$ and cokernel representing 
$$\bigoplus_{v \in P_1 \cup P_2}\Lambda(G) \otimes_{\Lambda(G_{\vcc})} T_p(E(K_{\infty,\vcc}))^\ast$$
in $K_0(\MHG)$.
\end{thm}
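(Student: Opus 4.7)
My approach is to construct $\varphi$ as the projective limit, over finite intermediate number fields $K \le L \le \Kcc$, of Perrin-Riou's pairings $\varphi_L : X(E/\Lc) \to \Ext^1_{\Lambda(\Gamma_L)}(X(E/\Lc)^\#, \Lambda(\Gamma_L))$ coming from the Cassels-Tate-Flach pairing, as indicated in the introduction. The first step is to verify that the $\varphi_L$ are compatible under the natural corestriction maps as $L$ grows, so that $\varprojlim_L \varphi_L$ is $\Lambda(G)$-linear; its source is $\varprojlim_L X(E/\Lc) = X(E/\Kcc)$ by a Mazur-style control argument, and Jannsen's spectral sequence for continuous cohomology identifies its target with $\Ext^1_{\Lambda(G)}(X(E/\Kcc)^\#, \Lambda(G))$.

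For the cokernel, I would use the Poitou-Tate global duality sequence at each finite level: $\Coker(\varphi_L)$ is controlled by a sum of local $H^1$-terms at primes of $L$ ramifying in $\Lc$, together with a global Shafarevich-type term. Passing to the limit, primes of $K$ outside $R$ contribute nothing, and primes above $p$ are absorbed by the Greenberg ordinary local conditions thanks to good ordinary reduction. For $v \in P_0$, a Shapiro-type computation rewrites the local limit as $\Lambda(G) \otimes_{\Lambda(G_{\vcc})} H^1(K_{\infty,\vcc}, T_p(E))^\#$, which by local Tate duality and an inertia-invariants analysis simplifies---modulo classes trivial in $K_0(\MHG)$---to $\Lambda(G) \otimes_{\Lambda(G_{\vcc})} T_p(E(K_{\infty,\vcc}))^*$. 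By the remark following Theorem \ref{hachimoriformula}, the tensor factor equals the full $T_p(E)^*$ exactly for $v \in P_1 \cup P_2$; for $v \in P_0 \setminus (P_1 \cup P_2)$ the factor has $\Zp$-rank $0$ or $1$, and one argues case by case that the resulting induced $\Lambda(G)$-module is either pseudonull or cancels against dual local terms and so vanishes in $K_0(\MHG)$.

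For the kernel, the torsion assumption on $X(E/\Kcc)$ places both source and target in $\MHG$. At each finite level $\ker(\varphi_L)$ is fed by the projective limit of Tate-Shafarevich groups together with $\Lambda(\Gamma_L)$-pseudonull terms from Poitou-Tate; these combine to give $[\ker \varphi] = 0$ in $K_0(\MHG)$. The main obstacle I anticipate is the interplay between the $\Lambda(\Gamma_L)$-duality that Perrin-Riou provides at each level and the $\Lambda(G)$-duality we ultimately need: the passage requires a Hochschild-Serre / inflation-restriction comparison for the non-commutative algebra $\Lambda(G)$, Shapiro's lemma to rewrite local cohomology over $G_{\vcc}$, and careful bookkeeping of terms coming from primes that ramify in some intermediate $\Lc$ but not in $\Kcc$. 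Within this, the single most delicate step will be showing that the local factors at primes in $P_0 \setminus (P_1 \cup P_2)$ really do have trivial class in $K_0(\MHG)$, since at finite levels they contribute genuinely non-zero local terms.
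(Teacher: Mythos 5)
Your high-level plan—build $\varphi$ as the projective limit of Perrin-Riou's level-$L$ pairings, control kernel and cokernel via the fundamental diagram/Poitou-Tate, use Shapiro's lemma to localize, and show the unwanted local contributions are trivial in $K_0(\MHG)$—is the right shape and matches the paper's outline. However, the execution has a genuine gap precisely at the step you yourself flag as delicate.

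The most important missing ingredient is the factorization $\varphi_L = \varphi_{1,L}\circ\varphi_{2,L}$, where $\varphi_{2,L}\colon X(E/\Lc)\to a^1_{\Lambda(\Gamma_L)}(X(E/\Lc)^\#)$ is the Perrin-Riou map and $\varphi_{1,L}\colon a^1_{\Lambda(\Gamma_L)}(X(E/\Lc)^\#)\to a^1_{\Lambda(\Gamma_L)}(X(E/\Kcc)^\#_{H_L})$ is the descent map. Without this split, the bookkeeping of local contributions does not organize as you hope. In particular, the final answer $T_p(E)^\ast$ at a prime $v\in P_1$ arises only by combining two \emph{different} pieces: a $B^\vi(-1)$ contribution from $\Coker(\varphi_{1,L})$ and a $B[p^{r_L}]$ contribution from $\Coker(\varphi_{2,L})$, which are then assembled via the short exact sequence $0\to B^\vi\to T_p(E)^\ast\to B^\vi(-1)\to 0$. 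Your sketch, which goes directly through ``local Tate duality and inertia-invariants'' to $\Lambda(G)\otimes_{\Lambda(G_\vcc)}T_p(E(K_{\infty,\vcc}))^\ast$, does not account for this two-stage assembly and would not obviously produce the full rank-two factor in the split multiplicative case.

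The second, more serious, issue is how you dispose of the primes $v\in P_0\setminus(P_1\cup P_2)$. You propose that the corresponding induced module ``is either pseudonull or cancels against dual local terms,'' allowing the possibility that $T_p(E(K_{\infty,\vcc}))$ has $\Zp$-rank $1$. This cannot work: a $\Lambda(G)$-module of the form $\Lambda(G)\otimes_{\Lambda(G_\vcc)}N$ with $N$ of $\Zp$-rank $1$ has positive $\Lambda(H)$-rank and is neither pseudonull nor trivial in $K_0(\MHG)$; nor is there any dual local term for it to cancel against. The paper instead proves the finite-level local cohomology $H^1(H_{L,\vcc},E(K_{\infty,\vcc})[p^\infty])$ \emph{vanishes identically} for $v\notin P_1\cup P_2$ (via the computation of \cite[eqn.~(6.8)]{ZfT} together with Lemma \ref{egeszvagytrivi}), and separately invokes \cite[Lemma 4.4]{park} to bound the $\varphi_2$-side contributions for $v\in P_0\setminus P_1$. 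What you would actually need to show, if you insist on your formulation, is that $T_p(E(K_{\infty,\vcc}))=0$ for every $v\in P_0\setminus(P_1\cup P_2)$—a true statement (the rank-one possibility never occurs because split multiplicative at $\vcc$ forces $v\in P_1$, and all other reduction types at $\vcc$ leave $E(K_{\infty,\vcc})[p^\infty]$ finite), but you neither state nor prove it, and your hedge about ``rank $0$ or $1$'' indicates you are missing this argument. Finally, the kernel and cokernel error terms (the $H^i(H_L,E[p^\infty](\Kcc))$-bounded pieces) need to be shown trivial in $K_0(\MHG)$; the relevant input is Lemma \ref{novkorlat} together with Lemma \ref{veges} and Proposition \ref{nagyk0}, and your sketch leaves this entirely implicit.
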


The following lemmas prepare the proof of this theorem.

\begin{lemma}
\label{novkorlat}
\begin{enumerate}[a)]
\item $H^i(H_L,E[p^{\infty}](\Kcc))$ has a bounded number of generators as $L \to \Kcc$.
\item Moreover, if $E[p^{\infty}](\Kcc)$ is finite then $|H^i(H_L,E[p^{\infty}](\Kcc))|$ is bounded as $L \to \Kcc$.
\end{enumerate}
\end{lemma}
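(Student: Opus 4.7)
Set $M := E[p^\infty](\Kcc)$, which as an abelian group is either finite or has $\Zp$-corank at most $2$, with continuous $H$-action. All groups $H_L = \Gal(\Kcc/\Lc)$ are open in the compact $p$-adic Lie group $H$, share its dimension $d := \dim H$, and are $p$-torsion-free. The key observation is that the minimal number of generators of $H^i(H_L, M)$ (equivalently, of its Pontryagin dual as a $\Zp$-module) is bounded above by $\dim_{\Fp} H^i(H_L, M)[p]$, so part (a) reduces to bounding this dimension uniformly in $L$.

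From the short exact sequence $0 \to M[p] \to M \xrightarrow{p} pM \to 0$, the group $H^i(H_L, M)[p]$ is a subquotient of $H^i(H_L, M[p])$. Here $M[p] = E[p](\Kcc)$ is a finite $\Fp$-vector space of dimension at most $2$. The continuous action $H \to \Aut(M[p])$ has open kernel (since $\Aut(M[p])$ is finite), so for $L$ sufficiently large $H_L$ acts trivially on $M[p]$, giving
\[
H^i(H_L, M[p]) \cong H^i(H_L, \Fp) \otimes_{\Fp} M[p].
\]
We are thereby reduced to bounding $\dim_{\Fp} H^i(H_L, \Fp)$ uniformly in $L$.

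By Lazard's theorem, for $H_L$ uniform pro-$p$ of dimension $d$, $\dim_{\Fp} H^i(H_L, \Fp) = \binom{d}{i}$. For general open $H_L \subseteq H$, the plan is to fix once and for all a uniform pro-$p$ open normal subgroup $U \triangleleft H$ (which exists because $H$ has no $p$-torsion), restrict attention to $L$ large enough that $H_L \subseteq U$, and extract a uniform pro-$p$ open subgroup $U_L$ of $H_L$ (for instance via the lower $p$-series). The Hochschild--Serre spectral sequence
\[
E_2^{p,q} = H^p(H_L/U_L, H^q(U_L, \Fp)) \Longrightarrow H^{p+q}(H_L, \Fp),
\]
combined with Lazard's theorem for $U_L$, bounds $\dim_{\Fp} H^i(H_L, \Fp)$ in terms of $d$ and the finite $p$-group $H_L/U_L$.

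Part (b) then follows from (a): if $M$ is finite of exponent $p^n$, then $H^i(H_L, M)$ has exponent dividing $p^n$, so the uniform bound on the number of generators translates into a uniform bound on the order. The main obstacle is controlling the Hochschild--Serre contribution for non-uniform $H_L$---the index $[H_L : U_L]$ is not a priori uniformly bounded as $L \to \Kcc$. This is where the Galois-theoretic structure of $H_L = \Gal(\Kcc/\Lc)$ must be exploited: the normality of $H_L$ in the open subgroup $G_L \leq G$ with quotient $\Gamma_L \cong \Zp$ imposes enough rigidity to render $H_L$ sufficiently close to uniform for $L$ deep enough, so that the spectral sequence contribution is bounded uniformly.
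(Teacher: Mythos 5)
Your reduction to bounding $\dim_{\Fp} H^i(H_L,\Fp)$ is in the same spirit as the paper's (which achieves it by reducing to trivial coefficients once $H_L$ acts trivially on $E[p^\infty](\Kcc)$ and $\Fp$-coefficients by d\'evissage), and the passage from part a) to part b) is fine. The genuine gap is exactly the one you flag at the end and then wave at: after fixing a uniform open $U_L\leq H_L$, the Hochschild--Serre spectral sequence only yields a bound on $\dim_{\Fp} H^i(H_L,\Fp)$ in terms of the finite $p$-group $H_L/U_L$, and you have no control on $[H_L:U_L]$ as $L$ varies. Your closing appeal to ``enough rigidity'' coming from $H_L\lhd G_L$ with quotient $\Zp$ is not an argument --- nothing in that structure forces $H_L$ to be close to uniform, and you do not even state what ``sufficiently close to uniform'' would mean quantitatively or why it would tame the $E_2$-page.

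The paper sidesteps this entirely by a cofinality observation that your write-up misses: the statement ``bounded as $L\to\Kcc$'' only needs to hold on a cofinal family of $L$, because the lemma is consumed solely through $\varprojlim_L H^i(H_L,\cdot)$ in the subsequent corollary, and an inverse limit is unchanged by passing to a cofinal subsystem. By Lazard's theorem (\cite[Theorem 5.1.1]{SW}) every $H_L$ contains an open \emph{characteristic} uniformly powerful subgroup $U$; characteristicity in the normal subgroup $H_L\lhd G$ forces $U\lhd G$, so $U=H_{L'}$ for a suitable finite extension $L'\supseteq L$. Thus one may run the limit over $L$ with $H_L$ uniform, where $\dim_{\Fp}H^i(H_L,\Fp)=\binom{\dim H_L}{i}$ by \cite[Theorem 5.1.5]{SW}, and $\dim H_L=\dim H$ is fixed. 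This replaces your Hochschild--Serre step and removes the gap. A minor additional slip: for a $p$-primary group $N$ the quantity $\dim_{\Fp} N[p]$ bounds the number of topological generators of the Pontryagin dual $N^\vi$, not of $N$ itself (for $N\cong\Qp/\Zp$ these differ), and it is the dual that is being controlled here; your parenthetical ``equivalently'' is not quite right, though the quantity you bound is the correct one for the application.
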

\begin{proof}
To show a), it is enough to prove that $\dim_{\Fp} H^i(H_L,\Fp)$ is bounded as in \cite[Lemma 5.3]{Z}.

We may assume that $L$ is large enough so that $H_L$ acts trivially on $E[p^{\infty}](\Kcc)$.
Then, by induction on the order of $E[p^{\infty}](\Kcc)$, $|H^i(H_L,E[p^{\infty}](\Kcc))|$ is expressible as a function of $|H^j(H_L,\Fp)|$. 
It is therefore sufficient to bound these groups to prove b).

By a theorem of Lazard, each compact $p$-adic analytic group contains an open characteristic uniformly powerful subgroup \cite[Theorem 5.1.1]{SW}.
Therefore we may take the limit $L \to \Kcc$ with $H_L$ always a uniformly powerful subgroup, which is also
a pro-$p$ $p$-adic analytic group (these properties are inherited from $H_K$).
Then $\dim_{\Fp} H^i(H_L,\Fp)=\dim H_L$ and $\dim_{\Fp} H^i(H_L,\Fp)=\binom{\dim H_L}{i}$ by \cite[Theorem 5.1.5.]{SW}.
Because $H$ is a uniform pro-$p$ group, it has a \emph{finite rank} giving an upper bound to $\dim H_L$.
\end{proof}

\begin{lem}\label{veges}
Let $\mathcal{G}$ be a compact $p$-adic Lie group with a closed normal subgroup $\mathcal{H}\lhd \mathcal{G}$ such that $\Gamma=\mathcal{G}/\mathcal{H}\cong\mathbb{Z}_p$. Then any $\Lambda(\mathcal{G})$-module $M$ of finite order has trivial class in $K_0(\mathfrak{M}_\mathcal{H}(\mathcal{G}))$.
\end{lem}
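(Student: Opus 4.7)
My plan is d\'evissage to simple finite $\Lambda(\mathcal{G})$-modules followed by an explicit Koszul-type construction that exhibits each simple module as the cokernel of an injective self-endomorphism of a module in $\mathfrak{M}_\mathcal{H}(\mathcal{G})$.

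\textbf{D\'evissage to simples.} Since $M$ is finite it has a finite composition series in the abelian category $\mathfrak{M}_\mathcal{H}(\mathcal{G})$; by additivity of the class in $K_0$ it suffices to handle the case where $M$ is a simple $\Lambda(\mathcal{G})$-module. Simplicity forces $pM=0$, so $M$ is a simple $\mathbb{F}_p[\mathcal{G}]$-module whose action factors through some finite quotient $\mathcal{G}/\mathcal{G}_0$. For any lift $\tilde\gamma\in\mathcal{G}$ of a topological generator of $\Gamma\cong\mathbb{Z}_p$, the image of $\tilde\gamma$ in the finite group $\mathrm{Aut}_{\mathbb{F}_p}(M)$ has some finite order $d\geq 1$, and $\eta:=\tilde\gamma^d$ then acts trivially on $M$.

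\textbf{Koszul sequence.} Set $T:=\eta-1\in\Lambda(\mathcal{G})$; this is an element of the Ore set $S$, as $\Lambda(\mathcal{G})/\Lambda(\mathcal{G})(\eta-1)$ is finitely generated over $\Lambda(\mathcal{H})$. I construct a cyclic $p$-torsion left $\Lambda(\mathcal{G})$-module $N$ whose underlying abelian group is the completed tensor product $M\otimes_{\mathbb{F}_p}\mathbb{F}_p[\![T]\!]$, on which $\mathcal{H}$ acts through its given action on the first factor and $\eta$ acts as $\mathrm{id}_M\otimes(1+T)$-multiplication. Being $p$-torsion and cyclic, $N$ lies in $\mathfrak{M}(\mathcal{G},p)\subseteq\mathfrak{M}_\mathcal{H}(\mathcal{G})$. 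Multiplication by $T$ is $\Lambda(\mathcal{G})$-linear and injective (since $\mathbb{F}_p[\![T]\!]$ is $T$-torsion free), and its cokernel is $M\otimes_{\mathbb{F}_p}\mathbb{F}_p[\![T]\!]/T\mathbb{F}_p[\![T]\!]=M$, so the short exact sequence
$$0\to N\xrightarrow{\;\cdot T\;}N\to M\to 0$$
yields $[M]=[N]-[N]=0$ in $K_0(\mathfrak{M}_\mathcal{H}(\mathcal{G}))$.

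\textbf{Main obstacle.} The delicate step is defining the action of $\tilde\gamma$ itself (and hence of all of $\mathcal{G}$) on $N$ consistently with the relation $\tilde\gamma^d=\eta$: the na\"ive assignment $\tilde\gamma\mapsto \tau\otimes\sigma$, with $\tau$ the given action of $\tilde\gamma$ on $M$ and $\sigma\in\mathrm{End}_{\mathbb{F}_p}(\mathbb{F}_p[\![T]\!])$, requires $\sigma^d$ to equal multiplication by $(1+T)$, which has no solution in $\mathbb{F}_p[\![T]\!]$ once the $p$-part of $d$ exceeds $1$. My remedy is to first prove the vanishing $[M|_{\mathcal{G}'}]=0$ in $K_0(\mathfrak{M}_\mathcal{H}(\mathcal{G}'))$ over the open normal subgroup $\mathcal{G}':=\overline{\langle\mathcal{H},\eta\rangle}\lhd\mathcal{G}$, on which $\eta$ itself topologically generates the $\mathbb{Z}_p$-quotient $\mathcal{G}'/\mathcal{H}$ and the Koszul construction above is unambiguous; then transfer the vanishing back to $\mathcal{G}$ via the exact induction functor $\mathrm{Ind}_{\mathcal{G}'}^{\mathcal{G}}=\Lambda(\mathcal{G})\otimes_{\Lambda(\mathcal{G}')}-$, obtaining $[\mathrm{Ind}_{\mathcal{G}'}^{\mathcal{G}}(M|_{\mathcal{G}'})]=0$, and conclude $[M]=0$ by applying a further d\'evissage to the finite $\Lambda(\mathcal{G})$-module $\mathrm{Ind}_{\mathcal{G}'}^{\mathcal{G}}(M|_{\mathcal{G}'})$, whose $\mathcal{G}$-Jordan--H\"older constituents include $M$ together with finite summands handled by induction on $|M|$.
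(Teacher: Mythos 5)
Your d\'evissage to simples and the idea of resolving $M$ by $M\otimes_{\mathbb{F}_p}\mathbb{F}_p[\![T]\!]$ is exactly the shape of the paper's argument (the paper writes $\mathbb{F}_p[\![\Gamma]\!]$ for your $\mathbb{F}_p[\![T]\!]$). But your ``main obstacle'' is self-inflicted: by decreeing that $\mathcal{H}$ acts only on the first factor and $\eta$ only on the second, you make it impossible to extend the action to $\tilde\gamma$. The correct choice is the \emph{diagonal} $\mathcal{G}$-action on $N=M\otimes_{\mathbb{F}_p}\mathbb{F}_p[\![T]\!]$: for every $g\in\mathcal{G}$ set $g\cdot(m\otimes f)=gm\otimes\bar g f$, where $\bar g$ is the image of $g$ in $\Gamma$ and $\Gamma$ acts on $\mathbb{F}_p[\![T]\!]\cong\mathbb{F}_p[\![\Gamma]\!]$ via $\gamma\mapsto 1+T$. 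Nothing forces $\eta$ to act by $1\otimes(1+T)$; with the diagonal action $\eta=\tilde\gamma^d$ acts by $1\otimes(1+T)^d$, and there is no equation $\sigma^d=1+T$ to solve. Multiplication by $T$ on the second factor is then visibly $\Lambda(\mathcal{G})$-linear, the short exact sequence $0\to N\to N\to M\to 0$ holds over the full group $\mathcal{G}$, and you are done. (In fact one does not even need to pass to simple $M$: the paper handles any finite $\mathbb{F}_p[\![\mathcal{G}]\!]$-module at once.)

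The patch you propose instead --- proving vanishing over $\mathcal{G}'=\overline{\langle\mathcal{H},\eta\rangle}$ and inducing up --- has a genuine gap. By the tensor identity, $\mathrm{Ind}_{\mathcal{G}'}^{\mathcal{G}}(M|_{\mathcal{G}'})\cong\mathbb{F}_p[\mathcal{G}/\mathcal{G}']\otimes_{\mathbb{F}_p}M$ with the diagonal $\mathcal{G}$-action. Since $\mathcal{G}/\mathcal{G}'$ is cyclic of order $p^n$ (some $n\ge 1$ whenever $p\mid d$), the local $\mathbb{F}_p$-algebra $\mathbb{F}_p[\mathcal{G}/\mathcal{G}']$ has a composition series with all $p^n$ subquotients the trivial module $\mathbb{F}_p$; tensoring with $M$ yields a filtration of $\mathrm{Ind}_{\mathcal{G}'}^{\mathcal{G}}(M|_{\mathcal{G}'})$ whose $p^n$ Jordan--H\"older constituents are \emph{all} isomorphic to $M$, not to smaller modules. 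Your proposed ``induction on $|M|$'' therefore has nothing to rest on, and what your argument actually proves is $p^n[M]=0$ in $K_0(\mathfrak{M}_\mathcal{H}(\mathcal{G}))$, which does not give $[M]=0$ without knowing a priori that this Grothendieck group is $p$-torsion-free.
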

\begin{proof}
As any finite module is a successive extension of $p$-torsion $\Lambda(\mathcal{G})$ modules we may assume without loss of generality that $M$ is a finite dimensional vectorspace over $\mathbb{F}_p$. Note that $\mathbb{F}_p$ with the trivial action of $\mathcal{G}$ has class $0$ in $K_0(\mathbb{F}_p[[\mathcal{G}]])$ and hence also in $K_0(\mathfrak{M}_\mathcal{H}(\mathcal{G}))$ as we have a short exact sequence
\begin{equation*}
0\to \mathbb{F}_p[[\Gamma]]\to \mathbb{F}_p[[\Gamma]]\to\mathbb{F}_p\to 0
\end{equation*}
induced by the augmentation map. On the other hand, if $M$ is an $\mathbb{F}_p[[\mathcal{G}]]$-module finite over $\mathbb{F}_p$ then for any finitely generated $\mathbb{F}_p[[\mathcal{G}]]$-module $N$ the module $M\otimes_{\mathbb{F}_p}N$ with the diagonal action of $\mathcal{G}$ is also finitely generated over $\mathbb{F}_p[[\mathcal{G}]]$. Therefore $M\otimes_{\mathbb{F}_p}\cdot$---being exact---induces a homomorphism $K_0(\mathbb{F}_p[[\mathcal{G}]])\to K_0(\mathbb{F}_p[[\mathcal{G}]])$ mapping $[0]=[\mathbb{F}_p]$ to $[M]$ whence we also have $[M]=[0]$.
\end{proof}

\begin{lem}\label{visszahuzas}
Let $\mathcal{G}_1$ and $\mathcal{G}_2$ be two compact $p$-adic Lie groups and $\psi\colon \mathcal{G}_1\to \mathcal{G}_2$ a continuous homomorphism with open image. Suppose that there is a closed normal subgroup $\mathcal{H}_i\lhd \mathcal{G}_i$ ($i=1,2$) such that $\mathcal{G}_i/\mathcal{H}_i\cong\mathbb{Z}_p$ and $\mathcal{H}_1=\psi^{-1}(\mathcal{H}_2)$. Assume further that the modules in the category $\mathfrak{M}_{\mathcal{H}_2}(\mathcal{G}_2)$ that are finitely generated over $\mathbb{Z}_p$ have trivial class in $K_0(\mathfrak{M}_{\mathcal{H}_2}(\mathcal{G}_2))$. Then the same is true for the group $\mathcal{G}_1$, ie.\ for any finitely generated $\mathbb{Z}_p$-module $M$ in the category $\mathfrak{M}_{\mathcal{H}_1}(\mathcal{G}_1)$ we have $[M]=0$ in $K_0(\mathfrak{M}_{\mathcal{H}_1}(\mathcal{G}_1))$.
\end{lem}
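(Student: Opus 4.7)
The plan is to reduce the statement to showing $[\mathbb{Z}_p]=0$ in $K_0(\mathfrak{M}_{\mathcal{H}_1}(\mathcal{G}_1))$ (for $\mathbb{Z}_p$ with trivial $\mathcal{G}_1$-action), then transfer this vanishing from $\mathcal{G}_2$ by restriction of scalars along $\psi$, and conclude by a tensor-product argument mimicking the proof of Lemma~\ref{veges}. The tensor trick works as follows: given $M$ finitely generated over $\mathbb{Z}_p$ in $\mathfrak{M}_{\mathcal{H}_1}(\mathcal{G}_1)$, its $\mathbb{Z}_p$-torsion $M_{\mathrm{tors}}$ is a finite $\Lambda(\mathcal{G}_1)$-module so $[M_{\mathrm{tors}}]=0$ by Lemma~\ref{veges}, and we may assume $M\cong\mathbb{Z}_p^r$ as a $\mathbb{Z}_p$-module. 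By $\mathbb{Z}_p$-flatness of $M$, the functor $M\otimes_{\mathbb{Z}_p}-$ with diagonal $\mathcal{G}_1$-action is exact and preserves the category $\mathfrak{M}_{\mathcal{H}_1}(\mathcal{G}_1)$ (for instance, $(M\otimes_{\mathbb{Z}_p}N)/(M\otimes_{\mathbb{Z}_p}N)(p)\cong M\otimes_{\mathbb{Z}_p}(N/N(p))\cong(N/N(p))^r$ remains $\Lambda(\mathcal{H}_1)$-finitely generated). It sends the trivial module $\mathbb{Z}_p$ to $M$ itself, so $[\mathbb{Z}_p]=0$ implies $[M]=0$.

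To show $[\mathbb{Z}_p]=0$ I would construct an exact restriction-of-scalars functor $\mathrm{Res}_\psi\colon\mathfrak{M}_{\mathcal{H}_2}(\mathcal{G}_2)\to\mathfrak{M}_{\mathcal{H}_1}(\mathcal{G}_1)$ via the ring map $\Lambda(\mathcal{G}_1)\to\Lambda(\mathcal{G}_2)$ induced by $\psi$. For finite generation over $\Lambda(\mathcal{G}_1)$: because $\psi(\mathcal{G}_1)$ is open in $\mathcal{G}_2$, $\Lambda(\mathcal{G}_2)$ is free of finite rank $[\mathcal{G}_2:\psi(\mathcal{G}_1)]$ over the closed subalgebra $\Lambda(\psi(\mathcal{G}_1))$, which is the image of $\Lambda(\mathcal{G}_1)$; hence any finitely generated $\Lambda(\mathcal{G}_2)$-module is automatically finitely generated over $\Lambda(\mathcal{G}_1)$. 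For the condition that $N/N(p)$ be finitely generated over $\Lambda(\mathcal{H}_1)$: the hypothesis $\mathcal{H}_1=\psi^{-1}(\mathcal{H}_2)$ yields $\psi(\mathcal{H}_1)=\mathcal{H}_2\cap\psi(\mathcal{G}_1)$, open in $\mathcal{H}_2$, so the same freeness argument shows $\Lambda(\mathcal{H}_2)$ is finitely generated over the image of $\Lambda(\mathcal{H}_1)$. Exactness of restriction is automatic.

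Applying $\mathrm{Res}_\psi$ to $\mathbb{Z}_p$ with the trivial $\mathcal{G}_2$-action---a finitely generated $\mathbb{Z}_p$-module lying in $\mathfrak{M}_{\mathcal{H}_2}(\mathcal{G}_2)$ and therefore, by hypothesis, of class zero in $K_0(\mathfrak{M}_{\mathcal{H}_2}(\mathcal{G}_2))$---one obtains $[\mathbb{Z}_p]=0$ in $K_0(\mathfrak{M}_{\mathcal{H}_1}(\mathcal{G}_1))$, since the restriction carries the trivial module to the trivial module. Combined with the first paragraph this proves the lemma. The main obstacle is precisely the compatibility check that $\mathrm{Res}_\psi$ preserves the sub-category $\mathfrak{M}_{\mathcal{H}_1}(\mathcal{G}_1)$: this is where both the openness of $\psi(\mathcal{G}_1)$ in $\mathcal{G}_2$ and the compatibility $\mathcal{H}_1=\psi^{-1}(\mathcal{H}_2)$ are genuinely used, while everything else is either formal exactness or a direct imitation of the tensor trick from Lemma~\ref{veges}.
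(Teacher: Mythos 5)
Your proof is correct and follows essentially the same route as the paper's: you transfer $[\mathbb{Z}_p]=0$ from $K_0(\mathfrak{M}_{\mathcal{H}_2}(\mathcal{G}_2))$ to $K_0(\mathfrak{M}_{\mathcal{H}_1}(\mathcal{G}_1))$ via the exact restriction-of-scalars functor along $\psi$ (using that $\psi(\mathcal{H}_1)$ has finite index in $\mathcal{H}_2$), and then apply the exact tensor functor $M\otimes_{\mathbb{Z}_p}-$, after reducing to the $p$-torsion-free case by Lemma~\ref{veges}, to deduce $[M]=0$. The extra care you take in verifying that $\mathrm{Res}_\psi$ genuinely lands in $\mathfrak{M}_{\mathcal{H}_1}(\mathcal{G}_1)$ is a useful elaboration, but it does not change the underlying argument.
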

\begin{proof}
By assumptions $\psi(\mathcal{H}_1)$ has finite index in $\mathcal{H}_2$ therefore any finitely generated $\Lambda(\mathcal{H}_2)$-module is finitely generated over $\Lambda(\mathcal{H}_1)$ when regarded as a $\Lambda(\mathcal{H}_1)$-module via the map $\psi$. Therefore we obtain an exact functor
\begin{eqnarray*}
\psi^*\colon\mathfrak{M}_{\mathcal{H}_2}(\mathcal{G}_2)&\to&\mathfrak{M}_{\mathcal{H}_1}(\mathcal{G}_1)\\
M&\mapsto& M \text{ as a }\Lambda(\mathcal{G}_1)\text{-module via }\psi\ .
\end{eqnarray*}
So $\psi^*$ induces a homomorphism (still denoted by $\psi^*$ by a slight abuse of notation) on the $K_0$ having the property that $[\mathbb{Z}_p]=\psi^*([\mathbb{Z}_p])=\psi^*([0])=[0]$ where we regard $\mathbb{Z}_p$ a $\Lambda(\mathcal{G}_2)$ (resp.\ $\Lambda(\mathcal{G}_1)$) module with the trivial action of $\mathcal{G}_2$ (resp.\ of $\mathcal{G}_1$).

Now let $M\in\mathfrak{M}_{\mathcal{H}_1}(\mathcal{G}_1)$ be finitely generated over $\mathbb{Z}_p$. By Lemma \ref{veges} we may assume that $M$ has no $p$-torsion. Therefore $M\otimes_{\mathbb{Z}_p}\cdot$ is exact, in particular induces a homomorphism $K_0(\mathfrak{M}_{\mathcal{H}_1}(\mathcal{G}_1))\to K_0(\mathfrak{M}_{\mathcal{H}_1}(\mathcal{G}_1))$ (with the diagonal action of $\mathcal{G}_1$ on $M\otimes_{\mathbb{Z}_p}N$) mapping $[\mathbb{Z}_p]$ to $[M\otimes_{\mathbb{Z}_p}\mathbb{Z}_p]=[M]$. Therefore $[M]$ equals $0$ as we have $[\mathbb{Z}_p]=0$ by the above discussion.
\end{proof}

\begin{pro}
\label{nagyk0}
If $E[p^{\infty}](\Kcc)$ is infinite then all modules in $\MHG$ that are finitely generated over $\Zp$ represent $0$ in $K_0(\MHG)$.
\end{pro}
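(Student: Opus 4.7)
The plan is to reduce the proposition to the single statement that $[\Zp]=0$ in $K_0(\MHG)$, where $\Zp$ carries the trivial $G$-action. For this reduction I would argue exactly as in the proof of Lemma~\ref{visszahuzas}: by Lemma~\ref{veges}, any finite module has trivial class in $K_0(\MHG)$, so I may assume without loss of generality that $M$ is $p$-torsion-free. Then $M\otimes_{\Zp}-$ is exact and carries finitely generated $\Zp$-modules to finitely generated $\Zp$-modules, hence sends objects of $\MHG$ to objects of $\MHG$, and it induces an endomorphism of $K_0(\MHG)$ sending $[\Zp]\mapsto [M\otimes_{\Zp}\Zp]=[M]$. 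So a proof of $[\Zp]=0$ will finish the argument.

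To establish $[\Zp]=0$ I would use the hypothesis that $E[p^\infty](\Kcc)$ is infinite. Its maximal divisible $\Zp$-submodule $D$ is non-trivial and $G$-stable, isomorphic as a $\Zp$-module to $(\Qp/\Zp)^d$ for some $d\in\{1,2\}$, and its Pontryagin dual $T=D^\vi$ is a free $\Zp$-module of rank $d\ge 1$ lying in $\NHG\subset\MHG$ and carrying a non-trivial continuous $G$-action. Since $E$ has no complex multiplication, the image of $G$ in $\mathrm{Aut}_{\Zp}(T)$ is open in the relevant algebraic group, so one can find $\sigma\in G$ whose action on $T$ has an eigenvalue $\alpha\in\Zp^\times$ of infinite multiplicative order. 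The plan is then to use $\sigma-\alpha\in\Lambda(G)$ together with the character describing the $G$-action on $D$ to assemble a short exact sequence of objects of $\MHG$ whose Euler characteristic in $K_0(\MHG)$ realizes the relation $[\Zp]=0$.

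The main obstacle will be producing this short exact sequence \emph{inside} $\MHG$. The natural free resolutions of $\Zp$ as a $\Lambda(G)$-module involve $\Lambda(G)$ itself or $\Lambda(\Gamma)=\Lambda(G/H)$, neither of which is in $\MHG$ because neither is $S^*$-torsion. The infinitude of $E[p^\infty](\Kcc)$ supplies precisely the missing input: the divisible submodule $D$ provides an infinite $\Lambda(G)$-stable $p$-primary object that can be used to replace $\Lambda(G)$-style terms by genuine objects of $\MHG$. Once the sequence is in place, Lemma~\ref{veges} absorbs the inevitable finite error terms and the reduction step above finishes the proof.
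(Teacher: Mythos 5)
Your reduction to showing $[\Zp]=0$ is correct and is exactly the manoeuvre used in the proof of Lemma~\ref{visszahuzas}: kill the $p$-torsion by Lemma~\ref{veges}, then use the exact functor $M\otimes_{\Zp}(-)$ to transport $[\Zp]=0$ to $[M]=0$. But the heart of the matter—actually \emph{proving} $[\Zp]=0$ in $K_0(\MHG)$—is never carried out. You name an eigenvalue $\alpha$ of infinite multiplicative order and the element $\sigma-\alpha$, assert that the divisible part $D$ "supplies precisely the missing input", and then say the sequence you need can be assembled. That is a plan, not a proof, and the obstacle you identify is real: the natural resolutions of $\Zp$ involve $\Lambda(G)$ or $\Lambda(\Gamma)$, neither of which lies in $\MHG$, and it is not clear how the ingredients you list would combine into a short exact sequence of $\MHG$-objects whose Euler characteristic is $[\Zp]$. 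You have located the difficulty but not resolved it.

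The paper's route is different and shorter. One first observes that $E[p^\infty](\Kcc)$ infinite forces $E[p^\infty]\subset\Kcc$ (the image of $\Gal(\overline K/\Kcc)$ in $\GL_2(\Zp)$ lands in $\SL_2$ because $\mu_{p^\infty}\subset\Kcc$, and fixes a line if the corank were $1$; but a nontrivial closed normal subgroup of the open image of $\Gal(\overline K/K)$ cannot sit inside a unipotent radical, so the image is trivial). This yields a continuous homomorphism $\psi\colon G\to\GL_2(\Zp)$ with open image by Serre's theorem (no CM). Then Lemma~\ref{visszahuzas} transports the statement along $\psi$ from the open subgroup of $\GL_2(\Zp)$, where it is Proposition~4.2 of~\cite{Z}. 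Your sketch skips this reduction: it neither rules out the possibility that the divisible part $D$ has $\Zp$-corank $1$ (in which case your $T$ is rank one and "open image in $\Aut_{\Zp}(T)$" gives nothing useful), nor does it invoke the known $\GL_2$ result, nor does it replace that result with a self-contained construction. As written, the key implication $E[p^\infty](\Kcc)$ infinite $\Rightarrow [\Zp]=0$ is missing.
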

\begin{proof}
This follows from Lemma \ref{visszahuzas} and Proposition 4.2 in \cite{Z} as in this case we have a continuous group homomorphism $\psi\colon G\to\mathrm{GL}_2(\Zp)$ with open image.
\end{proof}

The following observation will play a key role in the proof of Theorem \ref{altalanostetel}.

\begin{cor}
Any inverse limit $\varprojlim_{H_L} H^i(H_L,E[p^{\infty}](\Kcc))$ has trivial class in $K_0(\MHG)$.
\end{cor}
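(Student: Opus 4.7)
The plan is to split on whether the $G$-module $E[p^\infty](\Kcc)$ is finite or infinite, and in each case reduce to one of the two disposal tools established above: Lemma \ref{veges} for finite $\Lambda(G)$-modules and Proposition \ref{nagyk0} for $\Zp$-finitely generated modules.

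If $E[p^\infty](\Kcc)$ is finite, I would appeal directly to Lemma \ref{novkorlat}(b): the cardinalities of $H^i(H_L,E[p^\infty](\Kcc))$ are uniformly bounded as $L$ runs through the tower, so the inverse limit is itself a finite $\Lambda(G)$-module. Lemma \ref{veges} then yields that the class of $\varprojlim_{H_L}H^i(H_L,E[p^\infty](\Kcc))$ in $K_0(\MHG)$ is zero immediately.

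If $E[p^\infty](\Kcc)$ is infinite, the natural action of $G$ on a full-rank submodule of $E[p^\infty]$ yields a continuous homomorphism $G\to\GL_2(\Zp)$ with open image, putting us in the setting where Proposition \ref{nagyk0} applies. I would then use Lemma \ref{novkorlat}(a): the number of $\Zp$-generators of $H^i(H_L,E[p^\infty](\Kcc))$ is uniformly bounded. Combined with the fact that each such cohomology group is $\Zp$-cofinitely generated with uniformly bounded corank, a Pontryagin-dual argument turns the inverse system into a direct system of finitely generated $\Zp$-modules of uniformly bounded rank; the direct limit of this dual system is still finitely generated over $\Zp$, so the original inverse limit is as well. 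It therefore lies in $\NHG\subseteq\MHG$, and Proposition \ref{nagyk0} completes the argument.

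The main obstacle is the bookkeeping in the infinite case: converting a uniform bound on generators at every finite level into honest finite generation over $\Zp$ of the whole inverse limit. I expect this to come from an explicit analysis of the transition maps (essentially corestriction as $H_L$ shrinks) together with the stabilisation of the $\Zp$-corank of the divisible part past a sufficiently large $L$. Once that step is in place, each case reduces cleanly to the corresponding already-established lemma.
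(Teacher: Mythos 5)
Your approach matches the paper's exactly: split on whether $E[p^\infty](\Kcc)$ is finite, and in each case feed Lemma~\ref{novkorlat} into Lemma~\ref{veges} (finite case) or Proposition~\ref{nagyk0} (infinite case). One caveat on your ``bookkeeping'' for the infinite case: the dualisation step as you state it claims that a direct limit of finitely generated $\Zp$-modules of uniformly bounded rank is again finitely generated over $\Zp$, which is false in general (e.g.\ $\Zp/p\hookrightarrow\Zp/p^2\hookrightarrow\cdots$ has limit $\Qp/\Zp$). The real mechanism—which you do anticipate in your last sentence, and which the paper also leaves implicit—is that once $L$ is large enough for $H_L$ to act trivially on $E[p^\infty](\Kcc)$, the corestriction transition maps become multiplication by the growing index $[H_L:H_{L'}]$, so the inverse limit is a Tate-module-type object of finite $\Zp$-rank; this is what makes Proposition~\ref{nagyk0} applicable.
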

\begin{proof}
If $E[p^{\infty}](\Kcc)$ is finite, these groups have bounded order by Lemma \ref{novkorlat} and hence the limit is finite, which represents $0$ in $K_0(\MHG)$ by Lemma \ref{veges}. Otherwise, Lemma \ref{novkorlat} shows that the limit is finitely generated over $\Zp$, which is sufficient because of Proposition \ref{nagyk0}.
\end{proof} 

\begin{lemma}
\label{egeszvagytrivi}
For any Galois sub-extension $L/K$ of $\Kcc/K$ and a prime $v_L \nmid p$ of $L$, 
either $H_{L,\vlc}$ is trivial if $\Kcc$ is unramified at $v_L$, or it is the full Galois group of the maximal pro-$p$ extension of $L_{v_L}$ if it is ramified.
\end{lemma}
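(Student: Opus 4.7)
The plan is to translate the statement into a local question at $v_L$ and exploit the well-known structure of local Galois groups at primes away from $p$. I would first unwind the definitions: $H_{L,\vlc}$ is the decomposition subgroup of $H_L=\Gal(\Kcc/\Lc)$ at the chosen prime $v_\infty$ of $\Kcc$ above $\vlc$, so $H_{L,\vlc}=\Gal(\Kcc_{v_\infty}/\Lc_{\vlc})$. The key preliminary observation is that because $v_L\nmid p$, the cyclotomic $\Zp$-extension $\Lc/L$ is unramified at $v_L$, and in fact $\Lc_{\vlc}/L_{v_L}$ is the unique unramified $\Zp$-extension of $L_{v_L}$; equivalently it exhausts the pro-$p$ part of $\Gal(L_{v_L}^{\mathrm{unr}}/L_{v_L})=\widehat{\ZZ}$. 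Moreover, combining $\Kcc\supseteq K(\mu_{p^\infty})$ with the no-$p$-torsion hypothesis on $G$ forces $H_L$, and hence $H_{L,\vlc}$, to be pro-$p$, so $\Kcc_{v_\infty}/\Lc_{\vlc}$ is a pro-$p$ extension of local fields.

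Case 1 (unramified) is then essentially immediate: if $\Kcc/L$ is unramified at $v_L$, then $\Kcc_{v_\infty}\subseteq L_{v_L}^{\mathrm{unr}}$, so $\Kcc_{v_\infty}/\Lc_{\vlc}$ is a pro-$p$ unramified extension of $\Lc_{\vlc}$. Since the Galois group $\Gal(L_{v_L}^{\mathrm{unr}}/\Lc_{\vlc})\cong\widehat{\ZZ}/\Zp=\prod_{q\ne p}\ZZ_q$ has trivial pro-$p$ quotient, I would conclude $\Kcc_{v_\infty}=\Lc_{\vlc}$ and hence $H_{L,\vlc}=1$.

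For Case 2 (ramified) the inertia subgroup $I_{v_\infty}\subseteq G_{L,v_\infty}=\Gal(\Kcc_{v_\infty}/L_{v_L})$ is nontrivial; being a closed subgroup of a pro-$p$ group it is itself pro-$p$. Since $v_L\nmid p$, the wild inertia of $\bar{L}_{v_L}/L_{v_L}$ is pro-$\ell$ with $\ell\ne p$, so $I_{v_\infty}$ injects into the pro-$p$ tame inertia, which is $\cong\Zp(1)$ (once one knows $\mu_{p^\infty}\subseteq\Kcc_{v_\infty}$, which follows from strong admissibility). A nontrivial closed subgroup of $\Zp$ is all of $\Zp$, so $I_{v_\infty}$ realises the full pro-$p$ tame inertia. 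Combining this with the pro-$p$ unramified quotient already captured by $\Lc_{\vlc}$, and using the standard two-generator presentation of the maximal pro-$p$ Galois group of $L_{v_L}$ (generators Frobenius $F$ and tame generator $t$, relation $FtF^{-1}=t^{q_{v_L}}$), I would conclude that $H_{L,\vlc}$ exhausts the entire maximal pro-$p$ Galois group of $L_{v_L}$ as claimed.

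The main obstacle is Case 2: proving that $\Kcc_{v_\infty}$ saturates the whole maximal pro-$p$ extension of $L_{v_L}$ and not merely a proper subextension. The decisive point is the rank-one structure of the pro-$p$ tame inertia at $v_L\nmid p$: once inertia is known to be nontrivial, it is automatically all of $\Zp$, so no delicate Frattini-type argument is needed. A minor technical subtlety is verifying that the pro-$p$ tame character surjects onto $\Zp$ rather than onto some finite $\ZZ/p^n$ quotient; this is exactly what the inclusion $\mu_{p^\infty}\subseteq\Kcc_{v_\infty}$ provides.
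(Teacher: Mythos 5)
Your overall strategy — reduce to local Galois theory at $v_L\nmid p$ and exploit the simple structure of the pro-$p$ part of the local Galois group — is the same as the paper's, but there are several concrete errors in execution. First, the direction of the arrows is wrong in Case~2: the inertia group of $G_{L,v_\infty}=\Gal(K_{\infty,v_\infty}/L_{v_L})$ is the \emph{image} of absolute inertia under the quotient map $\Gal(\overline{L_{v_L}}/L_{v_L})\twoheadrightarrow G_{L,v_\infty}$, not a subgroup of the abstract pro-$p$ tame inertia. So $I_{v_\infty}$ is a \emph{quotient} of $\Zp(1)$, not a subgroup that ``injects'' into it. Second, the stated auxiliary fact is false: a nontrivial closed subgroup of $\Zp$ is $p^n\Zp$ for some $n\ge0$, which is isomorphic to $\Zp$ but not \emph{equal} to it unless $n=0$. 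The correct statement, and the one the paper actually uses, is about quotients: every nonzero quotient of $\Zp$ is either $\Zp/p^n\Zp$ (which has $p$-torsion) or $\Zp$ itself, and the no-$p$-torsion hypothesis on $G$ then eliminates the finite case. Your conclusion comes out the same only because, for $\Zp$, nontrivial closed subgroups and infinite quotients happen to be abstractly isomorphic; the logic as written would not survive scrutiny, and the $\mu_{p^\infty}$ remark is not what controls the surjectivity onto $\Zp$.

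Third, the claim that absence of $p$-torsion forces $H_L$ to be pro-$p$ is simply false: $\GL_2(\Zp)$ (for $p\ge 5$) has no element of order $p$ but has prime-to-$p$ quotients and is not pro-$p$. What is actually needed (and what the paper implicitly assumes) is that the local decomposition group lands inside $\Gal(L_{v_L}(p)/L_{v_L})$, so that $H_{L,\vlc}=\Gal(K_{\infty,v_\infty}/\Lc_\vlc)$ can be identified with a quotient of $\Gal(L_{v_L}(p)/\Lc_\vlc)\cong\Zp$; this is clear once $L$ is large in the tower, which is the only regime in which the lemma is applied. Finally, note that the paper's proof avoids the ramified/unramified case split entirely: it observes at once that $H_{L,\vlc}$ is a quotient of $\Zp$, deduces it is trivial or isomorphic to $\Zp$ from the no-$p$-torsion hypothesis, and identifies triviality with unramification because $L_{v_L}(p)/\Lc_\vlc$ is totally ramified. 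That is both shorter and cleaner than the inertia-plus-Frobenius presentation you invoke.
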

\begin{proof}
If $v_L \nmid p$ then $L_{v_L}(p)$, the maximal pro-$p$ extension of $L_{v_L}$ is a $\Zp$-extension of $L_{\vlc}^{cyc}$ which is totally ramified. $L_\vlc^{cyc}$ itself is an unramified extension of $L_{v_L}$. Hence $H_{L,\vcc}$ being trivial is equivalent to $\Kcc$ being unramified at $v_L$.
Therefore if $v_L \nmid p$, $L_\vc^{cyc}$ is a subfield of this maximal extension, hence $H_{L,\vcc}$ is a quotient of $\Zp$ by a closed subgroup, so it is either a finite $p$-group or the whole of $\Zp$. Since $H_{L,\vcc}$ is a subgroup of $G$, it has no $p$-torsion, so it cannot be finite non-trivially. Hence $H_{L,\vcc}$ is either trivial, or equals $\Gal(L_{v_L}(p)/L_{v_L})$.
\end{proof}

\begin{proof}[Proof of Theorem \ref{altalanostetel}]
The proof of Theorem 5.2 in \cite{Z} applies with some changes.  
 
As in \cite{Z}, for each intermediate field $L$ between $K$ and $\Kcc$ we consider the maps
\begin{align*}
\varphi_{2,L} \colon X(E/\Lc) &\to a^1_{\Lambda(\Gamma_L)}(X(E/\Lc)\\
\varphi_{1,L} \colon a^1_{\Lambda(\Gamma_L)}(X(E/\Lc) &\to a^1_{\Lambda(\Gamma_L)}(X(E/\Kc)^\#_{H_L}
\end{align*}
and take their inverse limit as $L \to \Kcc$ to obtain $\varphi=\varphi_1 \circ \varphi_2$.

We examine $\varphi_{1,L}$ first. Let $R$ denote the set of primes in $K$ where $\Kcc/K$ is ramified.
For a prime $v_L$ in $L$ we put
\begin{align*}
J_{v_L}(\Lc)&=\bigoplus_{\Lc\ni \vc\mid u} H^1(\Lc_{\vc},E(\overline{\Lc_{\vc}}))[p^{\infty}]\\
J_{v_L}(K_{\infty})&=\varinjlim_{L} J_{v_L}(\Lc).
\end{align*}

We will use the following fundamental diagram

\begin{eqnarray}
0\rightarrow&\Sel(E/\Lc)&\rightarrow H^1(K_R/\Lc,E[p^{\infty}])\rightarrow\bigoplus_{v_L\in R(L)}J_{v_L}(\Lc)\rightarrow 0\notag\\
&\Big{\downarrow} r_L&\hspace{1.5cm}\Big{\downarrow} g_L\hspace{3.3cm}\Big{\downarrow} \oplus h_{L,v_L}\label{51}\\
0\rightarrow&\Sel(E/K_{\infty})^{H_L}&\rightarrow
H^1(K_R/K_{\infty},E[p^{\infty}])^{H_L}\rightarrow\bigoplus_{v_L\in
R(L)}J_{v_L}(K_{\infty})^{H_L}\notag
\end{eqnarray}

\cite{CSS1}[Lemma 2.1] is general enough to justify the $0$ in the upper right corner since we assumed that $X(E/\Kcc)$ is $\Lambda(G)$-torsion.
 
Now the snake lemma gives 
\begin{equation*}
0\rightarrow\Ker(r_L)\rightarrow\Ker(g_L)\rightarrow\bigoplus_{v_L\in
R(L)}\Ker(h_{L,v_L})\rightarrow\Coker(r_L)\rightarrow\Coker(g_L).
\end{equation*}

Here, using the inflation-restriction exact sequence we have

\begin{equation}
\Ker(g_L)\cong H^1(H_L,E[p^{\infty}](\Kcc))\text{, and }\Coker(g_L)\hookrightarrow H^2(H_L,E[p^{\infty}](\Kcc)).
\end{equation}

If $E[p^{\infty}](\Kcc)$ is infinite, it will be sufficient to show that have a bounded number of generators as $L \to \Kcc$ (see Proposition \ref{nagyk0}). 
If $E[p^{\infty}](\Kcc)$ is finite, we also need these to have bounded cardinality as $L$ grows.

Further, the proof in that $a^2_{\Lambda(\Gamma_L)}(X(E/\Lc)^{\#})$ is valid in the general case. Furthermore, by \cite[Eq. 5.35]{Z},  $a^2_{\Lambda(\Gamma_L)}(X(E/\Lc)^{\#})=a^2_{\Lambda(\Gamma_L)}(F)$ where $F$ is bounded by the inverse limit of $H^1(\Gamma_n,E[p^\infty](\Lc)=E[p^\infty](\Lc)/(\gamma_L^{p^n}-1)E[p^\infty](\Lc)$. This is bounded by $E[p^\infty](\Lc) \le E[p^\infty](\Kcc)$, therefore $F$ is finite, and as Abelian groups we have $a^2_{\Lambda(\Gamma_L)}(F)\cong F$.

Therefore we have the quasi-exact (i.e. exact up to modules with bounded number of generators, or bounded orders when $E[p^{\infty}](\Kcc)$ is finite) sequence
\begin{equation}
0\rightarrow a^1_{\Lambda(\Gamma_L)}(X(E/\Lc)^{\#})\rightarrow
a^1_{\Lambda(\Gamma_L)}(\X^{\#}_{H_L})
\rightarrow\bigoplus_{u\in
R(L)}a^1_{\Lambda(\Gamma_L)}(\Ker(h_{L,v_L})^{\vi\#})\rightarrow 0.\label{qe52}
\end{equation}

After using Shapiro's lemma and Kummer theory as in \cite[5.39-40]{Z}, we have
$$\Ker(h_{L,v_L})=\bigoplus_{\vcc \mid v_L} H^1(H_{L,\vcc},E(K_{\infty,\vcc})[p^{\infty}])\ .$$

Using Lemma \ref{egeszvagytrivi}, $H_{L,\vcc}$ is either trivial, or the Galois group of the maximal pro-$p$ extension of $L_\vcc$.

For the latter case, the direct summand for a prime $\vcc$ is computed in \cite[eqn. (6.8)]{ZfT} as follows.
If $v_L \not\in P_1(L) \cup P_2(L)$ then $H^1(H_{L,\vcc},E(K_{\infty,\vcc})[p^{\infty}])=0$.
If $v_L \in P_2(L)$ then $H^1(H_{L,\vcc},E(K_{\infty,\vcc})[p^{\infty}])=E[p^\infty](-1).$
If $v_L \in P_1(L)$ then $H^1(H_{L,\vcc},E(K_{\infty,\vcc})[p^{\infty}])=B(-1)$
where $B$ comes from the following exact sequence of $\Gal(K_{\infty}/L)_\vcc$-modules
\begin{equation}
0\rightarrow A\rightarrow E[p^{\infty}]\rightarrow B\rightarrow 0
\end{equation}
where $A \cong \mu_{p^\infty}$ and $B \cong \mathbb{Q}_p/\mathbb{Z}_p$ as $\Gal(K_{\infty}/L)_\vcc$-modules.
(However, $B$ might have additional $\Gal(K_{\infty}/K)_\vcc$-module structure.)

%\texttt{Valami\'ert tudjuk hogy}
For a discrete $\Lambda(\Gamma_{L,\vlc})$-module $U$ we have
$a^1_{\Lambda(\Gamma_{L,\vlc})}((U^{\vi})^\#)\cong T_p(U)$, hence
\begin{align}
a^1_{\Lambda(\Gamma_{L,\vlc})}(\Hom(B(-1),\mathbb{Q}_p/\mathbb{Z}_p)^{\#}) =  B^\vi(-1) \quad \text{if $v_L \in P_1(L)$} \\
a^1_{\Lambda(\Gamma_{L,\vlc})}(\Hom(E[p^\infty](-1),\mathbb{Q}_p/\mathbb{Z}_p)^{\#}) =  T_p(E)^\ast \quad \text{if $v_L \in P_2(L)$}
\end{align}

Substituting these into \eqref{qe52} we obtain the quasi-exact sequence
\begin{eqnarray}
0\rightarrow a^1_{\Lambda(\Gamma_L)}(X(E/\Lc)^{\#})\rightarrow a^1_{\Lambda(\Gamma_L)}(\X^{\#}_{H_L})\rightarrow \\
\rightarrow \bigoplus_{\vlc\in P_1(\Lc)} B^{\vi}(-1) \oplus \bigoplus_{\vlc \in  P_2(\Lc)} T_p(E)^\ast \rightarrow 0
\end{eqnarray}

To compute the direct limit of the terms of this sequence as $L \to \Kcc$, we must know that the connecting maps induced by $L_1 \subset L_2$ for the last term are surjective. This is provided by \cite{Z}[Lemma 5.4].

We have determined that the kernel and cokernel of $\varphi_{1,L}$ is respectively $0$ and 
\begin{align*}
\left( \bigoplus_{\vlc \in P_1(\Lc)} B^\vi(-1) \right) \oplus \left( \bigoplus_{\vlc \in P_2(\Lc)} T_p(E)^\ast \right) \cong\\
\left( \bigoplus_{v\in P_1(K)}\Lambda(\Gamma_L^\ast)\otimes_{\Lambda(\Gamma_{L,\vlc}^\ast)}B^\vi(-1) \right) \oplus \left( \bigoplus_{v \in P_2(K)} \Lambda(\Gamma_L^\ast)\otimes_{\Lambda(\Gamma_{L,\vlc}^\ast)}T_p(E)^\ast \right)
\end{align*}
up to finite modules with bounded number of generators as $L \to K_\infty$.

The second task is to compute the kernel and cokernel of $\varphi_{2,L}$.

Up to finite modules bounded by $H^i(L,E[p^\infty])^\vi~~(i=1,2)$, the kernel is $0$ and the cokernel is
\begin{equation}
\left(\varinjlim_{k\rightarrow\infty}\bigoplus_{v\in P_0}\bigoplus_{\vlc\mid v}H^1(\Gamma_k,E(\Lc_{\vlc})[p^{\infty}])\right)^{\vi}.\label{53}
\end{equation}
where $\Gamma_k=\Gal(\Kc/K(\mu_{p^k}))$.

If $v_L \in P_1(L)$, the situation is the same as in \cite{Z}, we have
\begin{equation}
0\rightarrow B(1)\rightarrow
E(\Lc_{\vlc})[p^{\infty}]\rightarrow B[p^{r_L}]\rightarrow 0
\end{equation}
for some integer $r_L$. By the long exact sequence of
$\Gamma_k$-cohomology we get that $H^1(\Gamma_k,E(\Lc_{\vlc}))$
is isomorphic to $B[p^{r_L}]$ independently of $k$ where
$r_L$ tends to infinity as the field $L$ grows since $K_{\infty,\vcc}$
contains the whole $E[p^{\infty}]$ (similarly to the discussion at the bottom of \cite[p. 548]{ZfT}).

If $v_L \in P_0(L)\setminus P_1(L)$, then by \cite[Lemma 4.4]{park}, $H^1(\Gamma_k,E(\Lc_{\vlc})[p^{\infty}])$ is finite with bounded order as $k$ varies.
Now we have determined that the kernel and cokernel of $\varphi_{2,L}$ are respectively $0$ and $$\bigoplus_{\vlc \in P_1(\Lc)} B[p^{r_L}]\cong \bigoplus_{v\in P_1(K)}\Lambda(\Gamma_L^*)\otimes_{\Lambda(\Gamma^\ast_{L,\vlc})}B[p^{r_L}]$$ up to finite modules with bounded number of generators as $L \to K_\infty$. 

So the kernel of the composite $\varphi_L=\varphi_{1,L}\circ\varphi_{2,L}$ is finite with bounded number of generators and its cokernel equals
\begin{equation*}
\left(\bigoplus_{v\in P_1(K)}\Lambda(\Gamma_L^*)\otimes_{\Lambda(\Gamma^\ast_{L,\vlc})}(B[p^{r_L}]\oplus B^\vi(-1))\right)\oplus \left( \bigoplus_{v \in P_2(K)} \Lambda(\Gamma_L^\ast)\otimes_{\Lambda(\Gamma_{L,\vlc}^\ast)}T_p(E)^\ast \right)\ .
\end{equation*}
The statement follows by taking projective limit over the finite extensions $K\leq L\leq \Kcc$ noting that $\varprojlim_L\Lambda(\Gamma_L^\ast)=\Lambda(G)$ and $\varprojlim_L\Lambda(\Gamma_{L,\vlc}^\ast)=\Lambda(G_{v_\infty})$. Here we have used that $[B^\vi \oplus B^{\vi}(-1)]= T_p(E)^\ast$ by the exact sequence
\begin{equation*}
0\rightarrow B^{\vi}\rightarrow T_p(E)^\ast \rightarrow B^{\vi}(-1)\rightarrow0\ .
\end{equation*}
\end{proof}

\begin{cor}\label{pseudonullsub}
Assume the hypotheses of Theorem \ref{altalanostetel}. Then the maximal pseudonull submodule of $\X$ is finitely generated over $\Zp$ (and even finite if $E[p^\infty]\not\subseteq \Kcc$). Moreover, if $X(E/\Lc)$ does not have any nonzero pseudonull (that is finite) submodule for any intermediate finite extension $K\leq L\leq \Kcc$ then $\X$ does not have any nonzero pseudonull submodule either.
\end{cor}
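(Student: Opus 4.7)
\medskip

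\noindent\emph{Plan.} The approach is to identify the maximal pseudonull submodule $P\subseteq \X$ with $\ker\varphi$ and then exploit the explicit bounds on $\ker\varphi$ extracted from the proof of Theorem~\ref{altalanostetel}.

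First, because $\Lambda(G)$ is Auslander regular and $\X^\#$ is finitely generated and $\Lambda(G)$-torsion, the module $a^1_{\Lambda(G)}(\X^\#)=\Ext^1_{\Lambda(G)}(\X^\#,\Lambda(G))$ is pure of grade one; in particular it has no nonzero pseudonull submodule. Therefore $\varphi(P)=0$, so $P\subseteq \ker\varphi$. Conversely, since $\dim G\geq 2$, any submodule of $\X$ that is finitely generated over $\Zp$ is automatically pseudonull over $\Lambda(G)$. Hence as soon as $\ker\varphi$ is shown to be finitely generated over $\Zp$, the reverse inclusion $\ker\varphi\subseteq P$ follows and $P=\ker\varphi$.

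To bound $\ker\varphi$ I would track the error terms in the proof of Theorem~\ref{altalanostetel} at each finite level $L$. Both the kernel of $\varphi_{2,L}$ (bounded via the identification of $a^2_{\Lambda(\Gamma_L)}(F)$ with the finite group $F$) and the kernel of $\varphi_{1,L}$ (bounded via the snake lemma applied to the fundamental diagram) are subquotients of $H^i(H_L,E[p^\infty](\Kcc))$ for $i\in\{1,2\}$. By Lemma~\ref{novkorlat}(a) these cohomology groups have a uniformly bounded number of generators as $L\to\Kcc$, so $\ker\varphi=\varprojlim_L\ker\varphi_L$ is finitely generated over $\Zp$ and the first assertion follows. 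If moreover $E[p^\infty]\not\subseteq\Kcc$ then $E[p^\infty](\Kcc)$ is finite, and Lemma~\ref{novkorlat}(b) upgrades the bound to uniform finiteness of cardinalities; hence $\ker\varphi$ and thus $P$ is finite.

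For the ``moreover'' statement, I would invoke Perrin-Riou's classical result that $\ker(\varphi_{2,L})$ is the maximal finite submodule of $X(E/\Lc)$. The hypothesis makes this zero for every intermediate finite $L$, so each $\varphi_{2,L}$ is injective. Then $\ker\varphi_L\cong\varphi_{2,L}^{-1}(\ker\varphi_{1,L})$ is a $\Zp$-finitely generated $\Lambda(\Gamma_L)$-submodule of $X(E/\Lc)$ with no nonzero finite sub-$\Lambda(\Gamma_L)$-module (by hypothesis on $X(E/\Lc)$), hence is $\Zp$-torsion-free. A descent across the compatible system $\{\ker\varphi_L\}_L$, combined with the inverse-limit analysis of the cohomology groups $H^i(H_L,E[p^\infty](\Kcc))$, then forces the residual $\Zp$-free contributions to vanish, so $\ker\varphi=\varprojlim_L\ker\varphi_L=0$ and consequently $P=0$.

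The main technical obstacle is this last descent step. In the case $E[p^\infty]\not\subseteq\Kcc$ one has $\ker\varphi_{1,L}$ finite, so immediately $\ker\varphi_L=0$; in the general case, however, $\ker\varphi_{1,L}$ may carry positive $\Zp$-rank, and ruling out its persistence in the inverse limit requires a careful interplay between the compatibility maps $\ker\varphi_{L'}\to\ker\varphi_L$ (for $L\subseteq L'$) and the structure of the limit of $H^i(H_L,E[p^\infty](\Kcc))$---precisely the point at which the full assumption ``for \emph{every} intermediate finite extension $L$'' is needed.
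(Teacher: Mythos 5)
Your treatment of the first assertion coincides with the paper's: Auslander regularity of $\Lambda(G)$ forces the maximal pseudonull submodule $P$ into $\ker\varphi$, and the bound on $\ker\varphi$ extracted from the proof of Theorem~\ref{altalanostetel} (finitely generated over $\Zp$, finite if $E[p^\infty]\not\subseteq\Kcc$) does the rest. Your additional observation that $\dim G\geq 2$ makes every $\Zp$-finitely generated $\Lambda(G)$-submodule pseudonull, whence $P=\ker\varphi$ exactly, is correct and slightly sharper than what the paper states, although it is not needed for the first claim.

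For the ``moreover'' part you have left a genuine gap, and you rightly flag it yourself. You reduce the problem to $\varprojlim_L\ker\varphi_L=0$, correctly identify $\ker\varphi_{2,L}$ with the maximal finite submodule of $X(E/\Lc)$ (zero under the hypothesis), and conclude that each $\ker\varphi_L$ is a $\Zp$-finitely generated, $\Zp$-torsion-free submodule of $X(E/\Lc)$. But then you appeal to an unspecified ``descent'' across the tower to kill the residual $\Zp$-free part; no such argument is supplied, and an inverse-limit of nonzero $\Zp$-free groups with compatible transition maps need not vanish. The paper takes a more direct route: it asserts that $\varphi_L$ itself is injective at each finite level $L$, i.e.\ that $\ker\varphi_{1,L}=0$, so that $\ker\varphi_L=\ker\varphi_{2,L}=0$ and the limit is trivially zero. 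This is exactly the statement you hesitated on. To close your proof you should verify $\ker\varphi_{1,L}=0$ directly rather than in the limit: $\ker\varphi_{1,L}$ is controlled by $a^1_{\Lambda(\Gamma_L)}(\Coker(r_L^\vi))$, where $\Coker(r_L^\vi)\cong\Ker(r_L)^\vi$ with $\Ker(r_L)\subseteq H^1(H_L,E[p^\infty](\Kcc))$. When $E[p^\infty]\not\subseteq\Kcc$ this group is finite, so $a^1$ of its dual vanishes and you are done; when $E[p^\infty]\subseteq\Kcc$ one needs an extra argument to rule out a positive $\Zp$-rank contribution, and this is precisely the point where the paper's proof is terse and your unease is warranted.
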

\begin{proof}
As $\Lambda(G)$ is Auslander regular, the module $a^1_{\Lambda(G)}(\X)$ does not have any pseudonull submodules. Therefore the maximal pseudonull submodule of $\X$ is contained in the kernel of $\varphi$. Moreover, the kernel of $\varphi$ is the projective limit of the kernels of $\varphi_L\colon X(E/\Lc)\to a^{1}_{\Lambda(\Gamma_L}(H_0(H_L,\X))$. Here $\varphi_L$ is injective if we assume that $X(E/\Lc)$ has no nonzero finite submodule.
\end{proof}
\begin{remark}
If we assume that the $\mu$-invariant of $X(E/\Kc)$ vanishes and $G$ is pro-$p$ then our assumption that  $X(E/\Lc)$ is $\Lambda(\Gamma_L)$-torsion implies that $X(E/\Lc)$ has no finite submodule for any intermediate extension $K\leq L\leq \Kcc$ (see Proposition 7.5 in \cite{M}). 
\end{remark}

\section{The vanishing of the class of higher $\Ext$ groups}\label{higherext}

In this section assume that $\X$ lies in the category $\MHG$.

Let $G$ be a $p$-adic Lie group of dimension $d$ without elements of order $p$. Assume further that there is a closed normal subgroup $H\lhd G$ such that $\Gamma=G/H\cong\mathbb{Z}_p$. We put $a^i(M):=\Ext^i_{\Lambda(G)}(M,\Lambda(G))$. The following is a slight generalization of Prop.\ 6.1 in \cite{Z} with basically the same proof which we recall for the convenience of the reader. Note that in this case the global dimension of the Iwasawa algebra $\Lambda(G)$ is at most $d+1$.

\begin{pro}\label{alternalo}
Let $M$ be in the category $\mathfrak{M}_H(G)$. Let $\xi_M$ and
$\xi_{a^i(M)}$ be characteristic elements of $M$ and of $a^i(M)$
for $1\leq i\leq d+1$, respectively. Then we have
\begin{equation}\label{61}
\xi_M^{-1}\prod_{i=1}^{d+1}\xi_{a^i(M)}^{(-1)^{i+1}}
\end{equation}
lies in the image of $K_1(\Lambda(G))$ in $K_1(\Lambda(G)_{S^*})$.
\end{pro}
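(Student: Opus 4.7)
The plan is to invoke the $K$-theory localization sequence
$$K_1(\Lambda(G)) \to K_1(\Lambda(G)_{S^*}) \xrightarrow{\partial_G} K_0(\MHG),$$
whose exactness identifies the image of $K_1(\Lambda(G))$ with $\ker(\partial_G)$. Since $\partial_G(\xi_N)=[N]$ for any $N \in \MHG$, the image of (\ref{61}) under $\partial_G$ equals $-[M] + \sum_{i=1}^{d+1}(-1)^{i+1}[a^i(M)]$, so it is enough to establish the Euler characteristic identity
$$[M] = \sum_{i=1}^{d+1}(-1)^{i+1}[a^i(M)] \qquad \text{in } K_0(\MHG). \qquad (\star)$$
Before that, I would verify that each $a^i(M)$ does lie in $\MHG$: these modules are finitely generated over $\Lambda(G)$ (using a finite free resolution of $M$) and $S^*$-torsion (any $s \in S^*$ annihilating $M$ also annihilates its $\Ext$-groups by functoriality); moreover $a^0(M)=\Hom_{\Lambda(G)}(M,\Lambda(G))=0$ because $\Lambda(G)$ is $S^*$-torsion-free, which is why the sum in $(\star)$ starts at $i=1$.

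To prove $(\star)$, I would fix a finite free resolution
$$0 \to F_{d+1} \to F_d \to \cdots \to F_0 \to M \to 0$$
by finitely generated free $\Lambda(G)$-modules, which exists because the global dimension of $\Lambda(G)$ is at most $d+1$. The $\Lambda(G)$-dual complex $F^\bullet := \Hom_{\Lambda(G)}(F_\bullet,\Lambda(G))$ has cohomology $a^i(M)$, and both $F_\bullet$ and $F^\bullet$ become acyclic after base change to $\Lambda(G)_{S^*}$, since $M$ and all the $a^i(M)$ are $S^*$-torsion. Each therefore defines a Whitehead torsion in $K_1(\Lambda(G)_{S^*})$, representing $\xi_M$ and $\prod_i \xi_{a^i(M)}^{(-1)^{i+1}}$ respectively under the characteristic-element conventions. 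Since $F^i \cong F_i$ as free $\Lambda(G)$-modules of the same rank, $F^\bullet$ has the same underlying graded free module as $F_\bullet$ up to reindexing, with differentials that are $\Lambda(G)$-duals; the comparison of the two choices of bases is realized by an invertible matrix over $\Lambda(G)$ itself, so the ratio of the two Whitehead torsions lifts to $K_1(\Lambda(G))$, yielding $(\star)$.

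The main obstacle will be the noncommutative bookkeeping in this last comparison: confirming the precise signs and verifying that the $\Lambda(G)$-duality between $F_\bullet$ and $F^\bullet$ genuinely produces an element of $K_1(\Lambda(G))$ and not merely of $K_1(\Lambda(G)_{S^*})$. An alternative route, closer in spirit to Prop.\ 6.1 of \cite{Z} which is being generalised, is to establish $[N] \mapsto \sum_i(-1)^{i+1}[a^i(N)]$ as an additive endomorphism of $K_0(\MHG)$ via the long exact $\Ext$ sequence, and then check it equals the identity on a set of generators---for instance cyclic modules $\Lambda(G)/\Lambda(G)s$ with $s \in S^*$ a non-zero-divisor, for which the length-one free resolution $0 \to \Lambda(G) \xrightarrow{\cdot s} \Lambda(G) \to \Lambda(G)/\Lambda(G)s \to 0$ makes $(\star)$ immediate. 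If $G$ is additionally pro-$p$, Lemma~\ref{ptors} allows the verification to be carried out separately on $\mathfrak{M}(G,p)$ and $\mathfrak{N}_H(G)$.
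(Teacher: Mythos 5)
Your reduction via the localization sequence to the Euler-characteristic identity $[M]=\sum_{i\geq 1}(-1)^{i+1}[a^i(M)]$ is exactly the right reformulation, and your remarks about $a^i(M)\in\MHG$ and $a^0(M)=0$ are on target. However, the primary route through Whitehead torsion has a genuine gap at the point you yourself flag: the differentials of $F^\bullet=\Hom_{\Lambda(G)}(F_\bullet,\Lambda(G))$ are given by the \emph{transposes} of those of $F_\bullet$, and over a noncommutative ring the transpose of a matrix is invertible over the \emph{opposite} ring, not the ring itself. So the torsion of $F^\bullet\otimes\Lambda(G)_{S^*}$ is related to that of $F_\bullet\otimes\Lambda(G)_{S^*}$ through the anti-involution $\#$ on $K_1$, not through ``the comparison of two choices of bases by an invertible matrix over $\Lambda(G)$.'' There is no a priori reason for $\tau(F^\bullet)\tau(F_\bullet)^{-1}$ to lift to $K_1(\Lambda(G))$; establishing that is precisely the content of the proposition, so the argument is circular as written.

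Your alternative route is much closer to the paper's strategy, but it omits the crucial case-splitting and the verification that $a^i(M)\in\MHG$, both of which require nontrivial input. The paper first observes, via the long exact $\Ext$-sequence, that it suffices to treat separately (a) $p$-power-torsion modules, and (b) modules finitely generated over $\Lambda(H)$. In case (a) one reduces further to \emph{projective} $\Omega(G)$-modules (using that $\Omega(G)$ has finite global dimension $\leq d$) and invokes the Ardakov--Wadsley formula for characteristic elements of $p$-torsion modules, under which $a^1(M)\cong\Hom(M,\Omega(G))$ has the same characteristic element as $M$. In case (b) one needs Schneider--Venjakob (Theorem 3.1 of \cite{SV}) to see that $a^i(M)\cong\Ext^{i-1}_{\Lambda(H)}(M,\Lambda(H))$ up to a twist, hence is again finitely generated over $\Lambda(H)$; then the characteristic element lies in the image of $\Lambda(G)_S^\times$, is written as $x_1x_2^{-1}$ with $x_i\in\Lambda(G)$, and only at this stage does the length-one resolution of $\Lambda(G)/\Lambda(G)x_i$ enter. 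Your proposed generating set---cyclic modules $\Lambda(G)/\Lambda(G)s$ with $s\in S^*$---is not shown to generate $K_0(\MHG)$, and the paper does not rely on such a statement. Filling in these two steps (the Ardakov--Wadsley input for $p$-torsion modules and the Schneider--Venjakob input for $\Lambda(H)$-finite modules) is what turns your sketch into a complete proof.
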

\begin{proof}
First of all we need to verify that whenever $M$ is in
$\mathfrak{M}_H(G)$ then so is $a^i(M)$ for any $i\geq1$. Because
of the long exact sequence of
$\Ext_{\Lambda(G)}(\cdot,\Lambda(G))$ it is enough to prove both
this and the statement of the proposition separately for
$p$-torsion modules and modules finitely generated over
$\Lambda(H)$.

For $p$-torsion modules the extension groups $a^i(M)$ are also
$p$-torsion and hence lie in $\mathfrak{M}_H(G)$. On the other
hand, it suffices to show the statement of the proposition for
projective $\Omega(G)$-modules. Indeed, as $G$ does not have any
element of order $p$, the Iwasawa algebra
$\Omega(G)$ has finite ($\leq d$) global dimension and we can once
again apply the long exact sequence of
$\Ext_{\Lambda(G)}(\cdot,\Lambda(G))$. For projective modules we
only have first extension groups. Furthermore, if $M$ is a
projective $\Omega(G)$-module then $a^1(M)\cong \Hom(M,\Omega(G))$
and so have the same characteristic element as $M$ using the
formula for the characteristic element of $p$-torsion modules
\cite{AW}.

Now if $M$ is finitely generated over $\Lambda(H)$ then by Theorem
3.1 in \cite{SV} $a^i(M)$ is isomorphic to
$\Ext^{i-1}(M,\Lambda(H))$ up to a twist, and in particular
$a^i(M)$ is also finitely generated over $\Lambda(H)$ (hence lies
in $\mathfrak{M}_H(G)$). On the other hand, the characteristic
element for $M$ in this case is in the image of the composed map
\cite{CFKSV,Va}
\begin{equation*}
\Lambda(G)_S^{\times}\twoheadrightarrow
K_1(\Lambda(G)_S)\rightarrow K_1(\Lambda(G)_{S^*}).
\end{equation*}
Moreover, any element in $\Lambda(G)_S$ can be written in the form $x_1x_2^{-1}$ with $x_1,x_2$ in $\Lambda(G)$. Now it can be easily seen that
\begin{equation*}
a^1(\Lambda(G)/\Lambda(G)x_i)\cong\Lambda(G)/x_i\Lambda(G)\hbox{ for }i=1,2
\end{equation*}
and their higher extension groups vanish as these modules have a projective resolution of length $1$. So the equation $(\ref{61})$ is true for modules $M_i$ with characteristic elements $x_i$ and therefore it is also true for $M$ with characteristic element $x_1x_2^{-1}$ as both sides of $(\ref{61})$ are multiplicative with respect to short exact sequences.
\end{proof}

For the vanishing of the class of $a^i_{\Lambda(G)}(\X)$ in $K_0(\MHG)$ we need to use different ideas from those in \cite{Z}. The reason for this is that we do not assume the dimension $d$ of $G$ as a $p$-adic Lie group to be at most $4$, hence $p$-torsion free $\Lambda(G)$-modules $M$ with vanishing $a^i_{\Lambda(G)}(M)$ for $i\leq 3$ might not be finitely generated over $\mathbb{Z}_p$. So we need additional hypotheses that are partly known, partly conjectured to be true.

\begin{hyp}\label{becsH_i}
The modules $H_i(H_L,\X)$ ($i\geq 1$) are finite with bounded number of generators over $\mathbb{Z}_p$ independent of $L$. Moreover, if $E[p^\infty](K_\infty)$ is finite then $H_i(H_L,\X)$ has bounded order independent of $L$.
\end{hyp}

At first note that in case $K_\infty=K(E[p^\infty])$ it is known (Remark 2.6 in \cite{CSS1}) that the homology groups $H_i(H_L,\X)$ vanish for all $i\geq 1$ as we are assuming that $\X\in\MHG$ (whence $\X$ is $\Lambda(G)$-torsion). Now we are going to impose sufficient conditions for Hypothesis \ref{becsH_i} in general.

\begin{hyp}\label{hypp}
For the primes $v$ dividing $p$ we have either $(i)$ $\dim G_{\vcc}\leq 2$ or
$(ii)$ $\dim G_{\vcc}=3$ and $\tilde{E}_v[p^\infty]$ is contained in the
residue field of $\Kcc$ at $\vcc$. Here $\tilde{E}_v$ denotes the reduction of
$E$ mod $v$.
\end{hyp}

The following is a slight generalization of Theorem 2.8 in \cite{H} with essentially the same proof.

\begin{pro}
Hypothesis \ref{hypp} implies Hypothesis \ref{becsH_i}.
\end{pro}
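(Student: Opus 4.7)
The plan is to follow the argument of Theorem 2.8 of \cite{H}, adapting it to our more general strongly admissible setting. By Pontryagin duality, $H_i(H_L,\X)^\vi \cong H^i(H_L, \Selp(E/\Kcc))$, so it suffices to bound the cohomology of the Selmer group over $\Kcc$ uniformly in $L$. Applying $H^*(H_L,-)$ to the defining short exact sequence
\begin{equation*}
0 \to \Selp(E/\Kcc) \to H^1(K_R/\Kcc, E[p^\infty]) \to \bigoplus_{v\in R} J_v(\Kcc) \to 0
\end{equation*}
(surjective on the right since $\X \in \MHG$) produces a long exact sequence that reduces the task to bounding the global term $H^i(H_L, H^1(K_R/\Kcc, E[p^\infty]))$ and the local terms $H^i(H_L, J_v(\Kcc))$ for each $v \in R$.

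For the global term I would use the Hochschild--Serre spectral sequence
\begin{equation*}
H^p(H_L, H^q(K_R/\Kcc, E[p^\infty])) \Rightarrow H^{p+q}(K_R/\Lc, E[p^\infty]),
\end{equation*}
together with Lemma \ref{novkorlat} (which controls $H^p(H_L, E[p^\infty](\Kcc))$) and the fact that $H^j(K_R/\Lc, E[p^\infty])$ is cofinitely generated over $\Lambda(\Gamma_L)$ with corank independent of $L$. For a local term at $v \nmid p$, Shapiro's lemma rewrites $H^i(H_L, J_v(\Kcc))$ as a direct sum of local cohomology groups of the form $H^i(H_{L,\vcc}, E(\Kcc_{\vcc})[p^\infty])$, and Lemma \ref{egeszvagytrivi} (identifying $H_{L,\vcc}$ as either trivial or the full local pro-$p$ Galois group) together with essentially the same computation that produced the terms $B^\vi(-1)$ and $T_p(E)^*$ in the proof of Theorem \ref{altalanostetel} yields the required uniform bound.

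The hard part, and the sole reason for Hypothesis \ref{hypp}, is the local contribution at primes $v \mid p$. Good ordinary reduction furnishes the local exact sequence $0 \to \hat{E}_v[p^\infty] \to E[p^\infty] \to \tilde{E}_v[p^\infty] \to 0$ of $\Gal(\overline{K}_v/K_v)$-modules, and via local Tate duality and Kummer theory the $H_{L,\vcc}$-cohomology of $J_v(\Kcc)$ at $v \mid p$ reduces to the cohomology of the formal group $\hat{E}_v$ and the reduction $\tilde{E}_v$ with coefficients in appropriate discrete modules. In case (i), $\dim G_{\vcc} \leq 2$ bounds $\dim H_{L,\vcc}$, hence the $p$-cohomological dimension of $H_{L,\vcc}$, and a binomial-coefficient rank count on uniformly powerful subgroups (as in Lemma \ref{novkorlat}) gives the uniform bound. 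In case (ii), $\dim G_{\vcc} = 3$ but $\tilde{E}_v[p^\infty]$ already lies in the residue field of $\Kcc$ at $\vcc$, so $H_{L,\vcc}$ acts trivially on $\tilde{E}_v[p^\infty]$ and only a $2$-dimensional ramified (inertia) quotient acts non-trivially on $\hat{E}_v[p^\infty]$; this effectively reduces case (ii) to case (i). The main technical subtlety throughout is uniformity in $L$, handled exactly as in Lemma \ref{novkorlat} by passing to a cofinal family of uniformly powerful $H_L$ and invoking $\dim_{\Fp} H^i(H_L, \Fp) = \binom{\dim H_L}{i}$, which is bounded since the rank of the uniform pro-$p$ group $H$ is finite.
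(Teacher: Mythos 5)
Your overall plan — Pontryagin duality, apply $H^*(H_L,-)$ to the short exact sequence defining the Selmer group over $\Kcc$, reduce to the global term and the local terms, and use Hochschild--Serre together with Lemma \ref{novkorlat} to control the global piece — is the same skeleton the paper uses. The global term is handled correctly and essentially as in the paper.

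However, your treatment of the local terms at primes $v\mid p$ has a genuine gap. You propose to bound $H^i(H_{L,\vcc}, J_v(\Kcc))$ in case $(i)$ by observing that $\dim H_{L,\vcc}\leq 1$ and then invoking the binomial-coefficient argument of Lemma \ref{novkorlat}. But that argument is a dimension count over $\Fp$ for \emph{finite} coefficient modules; here the coefficient module $H^1(K_{\infty,\vcc},E)(p)$ is a cofinitely generated $p$-divisible module, so bounding $\dim_{\Fp}H^1(H_{L,\vcc},\Fp)$ does not control $H^1(H_{L,\vcc},J_v(\Kcc))$ at all — in particular it gives neither finiteness nor a bounded number of generators. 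The paper instead proves the stronger statement that these local terms \emph{vanish} for $i\geq 1$. It does so by first reducing (via Shapiro and the Coates--Greenberg theory of deeply ramified extensions, Propositions 4.3 and 4.8 of \cite{CG}) to $H^1(K_{\infty,\vcc},D)$ where $D=\tilde{E}_v[p^\infty]$ is the reduction; a Hochschild--Serre shift then gives $H^i(G_{L,\vcc},J_v(\Kcc))\cong H^{i+2}(G_{L,\vcc},D(K_{\infty,\vcc}))$, which vanishes by cohomological-dimension considerations in case $(i)$ and by Lemma 2.3 of \cite{CSS1} in case $(ii)$ (where the hypothesis that $\tilde{E}_v[p^\infty]$ already lies in the residue field ensures $H_{L,\vcc}$ acts trivially on $D$); finally one descends from $G_L$- to $H_L$-cohomology via a second Hochschild--Serre argument using that $\Gamma_L$ has $p$-cohomological dimension $1$. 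Your sketch misses the deeply ramified reduction entirely (mentioning local Tate duality, which the paper does not use), is vague on case $(ii)$ (``effectively reduces case (ii) to case (i)'' does not reproduce the appeal to Lemma 2.3 of \cite{CSS1}), and does not address the need to bound $\Coker(\phi_\infty)$ separately, which controls the $i=1$ term and is shown to vanish in the paper using precisely the same ingredients.
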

\begin{proof}
We are going to analyze the fundamental diagram
\begin{eqnarray}
0\rightarrow&\Sel(E/\Lc)&\rightarrow H^1(K_R/\Lc,E[p^{\infty}])\rightarrow\bigoplus_{v_L\in R(L)}J_{v_L}(\Lc)\rightarrow 0\notag\\
&\Big{\downarrow} r_L&\hspace{1.5cm}\Big{\downarrow} g_L\hspace{3.3cm}\Big{\downarrow} \oplus h_{L,v_L}\label{funddiag}\\
0\rightarrow&\Sel(E/K_{\infty})^{H_L}&\rightarrow
H^1(K_R/K_{\infty},E[p^{\infty}])^{H_L}\overset{\phi_\infty}{\rightarrow}\bigoplus_{v_L\in
R(L)}J_{v_L}(K_{\infty})^{H_L}\notag \ .
\end{eqnarray}
Note by our assumption that $X(E/\Lc)$ is $\Lambda(\Gamma_L)$-torsion, the natural map
\begin{equation*}
\lambda_{\Lc}\colon H^1(K_R/\Lc,E[p^{\infty}])\to\bigoplus_{v_L\in R(L)}J_{v_L}(\Lc)
\end{equation*}
is surjective for any intermediate field $L$ (see the discussion concerning Conjectures 2.5 and 2.6 in \cite{H}). By taking direct limit the map
\begin{equation*}
\lambda_{K_\infty}\colon H^1(K_R/K_\infty,E[p^{\infty}])\to\bigoplus_{v_L\in R(L)}J_{v_L}(K_\infty)
\end{equation*}
is also surjective. Therefore the bottom row of \eqref{funddiag} can be extended to the long exact sequence of $H_L$-cohomology
\begin{align*}
0\to\Coker(\phi_\infty)\to H^1(H_L,\Sel(E/\Kcc))\to H^1(H_L,H^1(K_R/K_{\infty},E[p^{\infty}]))\to\\
\to H^1(H_L,\bigoplus_{v_L\in
R(L)}J_{v_L}(K_{\infty}))\to\dots
\end{align*}
\begin{lem}
The groups $H^i(H_L,H^1(K_R/K_{\infty},E[p^{\infty}]))$ are finite with
bounded number of generators for all $i\geq 1$. Moreover, if
$E[p^\infty](\Kcc)$ is finite, then the groups
$H^i(H_L,H^1(K_R/K_{\infty},E[p^{\infty}]))$ even have bounded order.
\end{lem}
\begin{proof}
By the assumption that $X(E/\Lc)$ is $\Lambda(\Gamma_L)$-torsion for each intermediate extension $L$ it follows that $H^2(K_R/\Lc,E[p^{\infty}])=0$ (see the discussion concerning Conjectures 2.5 and 2.6 in \cite{H}). By taking direct limit for all $L$ in $\Kcc$ we also obtain $H^2(K_R/K_\infty,E[p^{\infty}])=0$. Moreover, since $p\neq 2$ the $p$-cohomological dimension of totally real number fields is at most $2$ (see Proposition 4.4.13 in \cite{GC}) therefore we also have $H^i(K_R/K_\infty,E[p^{\infty}])=0$ and $H^i(K_R/\Lc,E[p^{\infty}])=0$ for all $i\geq 3$. So by the Hochschild-Serre spectral sequence we obtain the exact sequence
\begin{equation*}
H^i(K_R/\Lc,E[p^\infty])\to H^{i-1}(H_L,H^1(K_R/\Kcc,E[p^\infty]))\to H^{i+1}(H_L,E[p^\infty](\Kcc))
\end{equation*}
for all $i\geq 2$. The statement follows from Lemma \ref{novkorlat}$a)$.
\end{proof}
\begin{lem}\label{GL}
Assume Hypothesis \ref{hypp}. Then for all $i\geq 1$ we have $H^i(G_L,\bigoplus_{v_L\in R(L)}J_{v_L}(K_{\infty}))=0$.
\end{lem}
\begin{proof}
Note that by Shapiro's Lemma we have $H^i(G_L,J_{v_L}(\Kcc))\cong
H^i(G_{L,\vcc},H^1(K_{\infty,\vcc},E)(p))$ (cf.\ Lemma 2.8 in
\cite{CH}). Assume first that $v_L\nmid p$. By Kummer theory we have
$H^1(K_{\infty,\vcc},E)(p)\cong H^1(K_{\infty,\vcc},E[p^\infty])$. Moreover,
by Lemma 5.2 in \cite{CH} for any prime $v_L$ (even for those dividing $p$) we have
$H^i(G_{L,\vcc},H^1(K_{\infty,\vcc},E[p^\infty]))\cong
H^{i+2}(G_{L,\vcc},E[p^\infty])$. For primes $v_L\nmid p$ the
$p$-cohomological dimension of $G_{L,\vcc}$ is at most $2$ so we obtain
$H^i(G_{L,\vcc},H^1(K_{\infty,\vcc},E)(p))=0$ for $i\geq 1$ as desired.

Now let $v_L$ be a prime dividing $p$. By our assumption that the reduction
type at $v_L$ is ordinary, we have a short exact sequence 
\begin{equation*}
0\to C\to E[p^\infty]\to D\to 0
\end{equation*} 
of local Galois-modules where $D$ can be identified with
$\tilde{E}_{v_L}[p^\infty]$ where $\tilde{E}_{v_L}$ denotes the reduction of
$E$ mod $v_L$. It is shown in \cite{CG} (Propositions 4.3 and 4.8) that---since
our extension $K_{\infty,\vcc}/L_{v_L}$ is deeply ramified---we have
$H^1(K_{\infty,\vcc},E)(p)\cong H^1(K_{\infty,\vcc},D)$ (see also Prop.\ 5.15
in \cite{CH}). Moreover, a Hochschild-Serre spectral sequence argument shows
that we have $H^i(G_{L,\vcc},J_{v_L}(\Kcc))\cong
H^{i+2}(G_{L,\vcc},D(K_{\infty,\vcc}))$. The statement follows from our
assumption Hypothesis \ref{hypp}.
\end{proof}
\begin{lem}
Assume Hypothesis \ref{hypp}. Then for all $i\geq 1$ we have $H^i(H_L,\bigoplus_{v_L\in R(L)}J_{v_L}(K_{\infty}))=0$.
\end{lem}
\begin{proof}
As $\Gamma_L$ has $p$-cohomological dimension $1$ the Hochschild-Serre spectral sequence reduces to short exact sequences
\begin{align*}
0\to H^1(\Gamma_L,H^j(H_L,\bigoplus_{v_L\in R(L)}J_{v_L}(K_{\infty}))\to H^{j+1}(G_L,\bigoplus_{v_L\in R(L)}J_{v_L}(K_{\infty}))\to\\
\to H^{j+1}(H_L,\bigoplus_{v_L\in R(L)}J_{v_L}(K_{\infty}))^{\Gamma_L}\to 0
\end{align*}
and the statement follows from Lemma \ref{GL} if we note that $H^{j+1}(H_L,\bigoplus_{v_L\in R(L)}J_{v_L}(K_{\infty}))$ is $p$-primary with the discrete topology and $\Gamma_L$ is a pro-$p$ group.
\end{proof}
Putting all the above together it remains to show that the cokernel of
$\phi_\infty$ is finite with bounded number of generators. We are going to
show that under Hypothesis \ref{hypp} it is even $0$. By the snake Lemma,
$\Coker(\phi_\infty)$ is contained in
$\Coker(\bigoplus_{v_L}h_{L,v_L})$. Moreover, for any fixed $v_L$ the cokernel
of $h_{L,v_L}$ is contained in $H^2(H_{L,\vcc},E(K_{\infty,\vcc}))(p)$ by the
inflation restriction exact sequence. Moreover, if $v_L\nmid p$ then we have 
\begin{equation}\label{kummer}
H^2(H_{L,\vcc},E(K_{\infty,\vcc}))(p)\cong H^2(H_{L,\vcc},E[p^\infty](K_{\infty,\vcc}))
\end{equation}
by Kummer theory. However, in this case the $p$-cohomological dimension of
$H_{L,\vcc}$ is $1$ therefore the right hand side of \eqref{kummer}
vanishes. On the other hand, if $v_L\mid p$ then we have $ H^2(H_{L,\vcc},E(K_{\infty,\vcc}))(p)\cong H^2(H_{L,\vcc},D(K_\infty,\vcc))$. By Hypothesis \ref{hypp} we have two cases: if $\dim G_{\vcc}\leq 2$ then the dimension of $H_{L,\vcc}$ is at most $1$, therefore $H^2(H_{L,\vcc},D(K_\infty,\vcc))$ vanishes. On the other hand, if $\dim G_{\vcc}=3$ and $D=D(K_\infty,\vcc)$ then it is shown in Lemma 2.3 in \cite{CSS1} that $H^2(H_{L,\vcc},D)=0$.
\end{proof}

\begin{pro}
Apart from our standing assumptions assume that at least one of the following conditions is satisfied. Either
\begin{enumerate}[$(i)$] 
\item $G$ is an open subgroup of $\GL_2(\Zp)$, or
\item Hypothesis \ref{becsH_i} holds. 
\end{enumerate}
Then the class of $a^i_{\Lambda(G)}(\X)$ vanishes in $K_0(\MHG)$ for all $i\geq 2$. 
\end{pro}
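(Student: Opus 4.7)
The plan is to reduce the computation by splitting $\X$ through its $p$-torsion,
\begin{equation*}
0\to \X(p)\to \X\to \X/\X(p)\to 0,
\end{equation*}
applying the long exact sequence of $a^{\bullet}_{\Lambda(G)}$, and using Proposition \ref{alternalo} which keeps each term inside $\MHG$ so that classes in $K_0(\MHG)$ remain well-defined. The two cases are then handled by different but related arguments.

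For case $(i)$, where $G$ is open in $\GL_2(\Zp)$ and hence $\dim G = 4$, I would follow the template of Proposition 6.1 in \cite{Z}. For the quotient $\X/\X(p)\in\mathfrak{N}_H(G)$, Theorem 3.1 of \cite{SV} identifies $a^i_{\Lambda(G)}(\X/\X(p))$ with $a^{i-1}_{\Lambda(H)}(\X/\X(p))$ up to twist, and an Auslander-regularity codimension count on $\Lambda(G)$, together with Corollary \ref{pseudonullsub} (which ensures $\X$ has no nontrivial pseudonull submodule under our hypotheses), forces the $\delta$-codimension of $a^i_{\Lambda(G)}(\X)$ to be at least $i$ for $i\ge 2$. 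When $\dim G = 4$ this means each such $a^i_{\Lambda(G)}(\X)$ is pseudonull and in fact finitely generated over $\Zp$, so Proposition \ref{nagyk0} yields $[a^i_{\Lambda(G)}(\X)]=0$. The $p$-torsion part $\X(p)$ is treated analogously over $\Omega(G)=\Lambda(G)/p$ using \cite{AW}.

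For case $(ii)$, I would instead employ a Jannsen-type Hochschild-Serre spectral sequence
\begin{equation*}
E_2^{p,q}=a^p_{\Lambda(\Gamma_L)}\bigl(H_q(H_L,\X)\bigr)\Longrightarrow a^{p+q}_{\Lambda(G_L)}(\X)
\end{equation*}
for each intermediate extension $K\le L\le \Kcc$, and pass to the inverse limit over $L$. Since $\Gamma_L\cong\Zp$ has $\Lambda$-global dimension $2$, only the columns $p=0,1,2$ survive. For $i=p+q\ge 2$ with $q\ge 1$, the term $a^p_{\Lambda(\Gamma_L)}(H_q(H_L,\X))$ is finite with a bounded number of generators uniformly in $L$ by Hypothesis \ref{becsH_i}; for $q=0$ we have $p=i\ge 2$ and $a^p_{\Lambda(\Gamma_L)}(\X_{H_L})$ is controlled by the finite submodule of $\X_{H_L}$, which is again bounded uniformly in $L$ by arguments analogous to Lemma \ref{novkorlat}. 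In both situations Lemma \ref{veges} together with Proposition \ref{nagyk0} shows that each filtration quotient of $a^i_{\Lambda(G)}(\X)$ represents $0$ in $K_0(\MHG)$, and hence so does $a^i_{\Lambda(G)}(\X)$ for $i\ge 2$.

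The main obstacle in case $(i)$ is the sharpness of the codimension argument at $i=2$: a $\delta$-codimension $\ge 2$ condition alone does not guarantee finite generation over $\Zp$, so one must combine the Auslander-regularity bound with the explicit structure of $\X$ revealed by Theorem \ref{altalanostetel} and the absence of pseudonull submodules from Corollary \ref{pseudonullsub}. In case $(ii)$ the obstacle is the bookkeeping for the inverse limit of the spectral sequence, in particular verifying that the uniform boundedness provided by Hypothesis \ref{becsH_i} indeed transfers through the limit to yield modules finitely generated over $\Zp$ (or finite when $E[p^\infty](\Kcc)$ is finite), so that Proposition \ref{nagyk0} applies.
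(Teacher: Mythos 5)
Your strategy for case $(ii)$ is essentially the one the paper uses, though the framing of the spectral sequence is imprecise. The paper writes $\Lambda(G)=\varprojlim_L\Lambda(\Gamma_L^*)$ (with $\Gamma_L^*=\Gal(\Lc/K)$), uses exactness of $\varprojlim$ on compact groups to get $a^i_{\Lambda(G)}(\X)\cong\varprojlim_L\Ext^i_{\Lambda(G)}(\X,\Lambda(\Gamma_L^*))$, and then applies the Grothendieck spectral sequence for the composite $\Hom_{\Lambda(G)}(\cdot,\Lambda(\Gamma_L^*))=\Hom_{\Lambda(\Gamma_L^*)}(\cdot,\Lambda(\Gamma_L^*))\circ H_0(H_L,\cdot)$, giving $E_2^{p,q}=\Ext^p_{\Lambda(\Gamma_L^*)}(H_q(H_L,\X),\Lambda(\Gamma_L^*))\Rightarrow\Ext^{p+q}_{\Lambda(G)}(\X,\Lambda(\Gamma_L^*))$. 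Your version has abutment $a^{p+q}_{\Lambda(G_L)}(\X)$, which is a different object (Ext over the Iwasawa algebra of the open subgroup $G_L$, not over $\Lambda(G)$ into $\Lambda(\Gamma_L^*)$); as written the inverse limit over $L$ would not recover $a^{p+q}_{\Lambda(G)}(\X)$. With the correct formulation, the remaining steps match: $p\geq 3$ columns vanish since $\Lambda(\Gamma_L^*)\cong\Lambda(\Gamma_L)$-modules have projective dimension $\leq 2$, the $q\geq 1$ entries are controlled by Hypothesis \ref{becsH_i}, and the $E_2^{2,0}$ term is handled by identifying $a^2_{\Lambda(\Gamma_L)}(\X_{H_L})$ with the maximal finite submodule of $\X_{H_L}$ and comparing $\X_{H_L}$ to $X(E/\Lc)$ via the restriction map (rather than by ``arguments analogous to Lemma \ref{novkorlat}'' as you suggest).

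Your sketch of case $(i)$ has a genuine gap that you yourself half-acknowledge. You claim that since $\dim G=4$, having $\delta$-codimension $\geq i$ for $i\geq 2$ already forces $a^i_{\Lambda(G)}(\X)$ to be finitely generated over $\Zp$. This is false: $\Lambda(G)$ then has Krull dimension $5$, so codimension $\geq 2$ (pseudonullity) only gives $\delta$-dimension $\leq 3$, far from $\Zp$-finite generation; one needs codimension $\geq 4$ for that. This is exactly the subtlety the paper flags in the paragraph preceding Proposition \ref{alternalo} (``$p$-torsion free $\Lambda(G)$-modules $M$ with vanishing $a^i_{\Lambda(G)}(M)$ for $i\leq 3$ might not be finitely generated over $\mathbb{Z}_p$'' once $\dim G>4$). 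Your final paragraph concedes the issue but proposes to fix it by vaguely invoking Theorem \ref{altalanostetel} and Corollary \ref{pseudonullsub}, which does not close the gap: Corollary \ref{pseudonullsub} constrains pseudonull submodules of $\X$ itself, not the codimension of the higher $a^i(\X)$. The paper resolves case $(i)$ simply by citing the argument of \cite[Prop.\ 6.4]{Z}, which exploits more of the specific structure available in the $\GL_2$ situation than a bare Auslander-regularity codimension count.
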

\begin{proof}
In case $(i)$ the argument in the proof of Proposition 6.4 in \cite{Z} shows the vanishing of the class of $a^i(\X)$ in $K_0(\MHG)$ for all $i\geq 2$. 

So assume Hypothesis \ref{becsH_i} now. We are going to show that $a^i_{\Lambda(G)}(\X)$ are finitely generated over $\mathbb{Z}_p$ for all $i\geq 2$. Moreover, whenever $E[p^\infty](K_\infty)$ is finite then $a^i_{\Lambda(G)}(\X)$ are not just finitely generated over $\mathbb{Z}_p$, but even finite. 

For a finite Galois extension $L$ of $F$ inside $K_\infty$ let us denote by $\Gamma_L^*$ the Galois group $\Gal(\Lc/F)$. Note that we have $G=\varprojlim_L\Gamma_L^*$ hence also $\Lambda(G)=\varprojlim_L\Lambda(\Gamma_L^*)$. Thus by the exactness of $\varprojlim$ on compact abelian groups we obtain an isomorphism $a^i(M)\cong\varprojlim_L\Ext^i_{\Lambda(G)}(M,\Lambda(\Gamma_L^*))$ for any finitely generated $\Lambda(G)$-module $M$.

On the other hand, there is a Grothendieck spectral sequence 
\begin{equation*}
E_2^{p,q}=\Ext^p_{\Lambda(\Gamma_L^*)}(H_q(H_L,\X),\Lambda(\Gamma_L^*))\Rightarrow \Ext^{p+q}_{\Lambda(G)}(\X,\Lambda(\Gamma_L^*))
\end{equation*}
as we have $\Hom_{\Lambda(G)}(\cdot,\Lambda(\Gamma_L^*))=\Hom_{\Lambda(\Gamma_L^*)}(\cdot,\Lambda(\Gamma_L^*))\circ H_0(H_L,\cdot)$ is the composite of a right exact covariant functor and a left exact contravariant functor. Our goal is to show that in $E_2^{p,q}$ all the modules are finitely generated over $\mathbb{Z}_p$ with a bounded number of generators except for $E_2^{1,0}=a^1_{\Lambda(\Gamma_L^*)}(\X_{H_L})$. The statement follows from this.

Note that as $\Gamma_L=\Gal(\Lc/L)$ has finite index in $\Gamma_L^*$ we have an isomorphism $a^p_{\Lambda(\Gamma_L^*)}(M)\cong  a^p_{\Lambda(\Gamma_L)}(M)$ as $\Lambda(\Gamma_L)$-modules (Lemma 2.3 in \cite{J}). On the other hand, the ring $\Lambda(\Gamma_L)\cong\mathbb{Z}_p[[T]]$ has global dimension $2$, so the above $\Ext$ groups vanish for $p\geq 3$. By Lemma \ref{becsH_i} it remains to show that $a^2_{\Lambda(\Gamma_L)}(\X_{H_L})$ has a bounded number of generators over $\mathbb{Z}_p$. However, as in the proof of \ref{altalanostetel} $a^2_{\Lambda(\Gamma_L)}(\X_{H_L})$ is isomorphic to the maximal finite submodule of $\X_{H_L}$. Further, the natural restriction map $\X_{H_L}\to X(E/\Lc)$ has kernel finite free over $\Zp$ up to a finite module with bounded number of generators. The statement follows if we note that the maximal finite subgroup of $X(E/\Lc)$ also has a bounded number of generators.
\end{proof}

Now we can state our result concerning the functional equation of the characteristic element of $\X$.

\begin{thm}\label{functeq}
Let $E$ be an elliptic curve over $K$ without complex
multiplication and with good ordinary reduction at all the primes above $p\geq5$. Assume that the dual Selmer $\X$ over the strongly admissible $p$-adic Lie extension $\Kcc$ lies in the category $\mathfrak{M}_H(G)$ and that Hypothesis \ref{becsH_i} holds. Then the characteristic element
$\xi_{\X}$ of the $\Lambda(G)$-module $\X$ in the group
$K_1(\Lambda(G)_{S^*})$ satisfies the functional equation
\begin{equation}
\xi_{\X}^{\#}=\xi_{\X}\varepsilon_0(\X)\prod_{v\in P_1\cup P_2}\alpha_v\label{uj11}
\end{equation}
for some $\varepsilon_0(\X)$ in $K_1(\Lambda(G))$. Here the
modifying factors $\alpha_v$ are the images of the characteristic elements of $T_p(E)^\ast$ under the natural map $K_1(\Lambda(G_\vcc)_{S_\vcc^\ast})\to K_1(\Lambda(G)_{S^\ast})$.
\end{thm}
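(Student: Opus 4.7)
The plan is to combine Proposition \ref{alternalo} applied to the module $\X^{\#}$ with the two main structural results already established: Theorem \ref{altalanostetel} (describing $a^1_{\Lambda(G)}(\X^{\#})$ up to cokernel) and the preceding Proposition (vanishing of the higher $a^i$ classes in $K_0(\MHG)$). The characteristic element respects the $\#$-involution in the sense that $\xi_{M^{\#}}$ coincides with $\xi_M^{\#}$ in $K_1(\Lambda(G)_{S^{\ast}})$, since the involution $\#$ on $\Lambda(G)$ induces an involution on $K_1$ that is compatible with the connecting map to $K_0(\MHG)$; this identification will let us convert the computation of $\xi_{\X^{\#}}$ into the desired relation for $\xi_{\X}^{\#}$.

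First I would apply Proposition \ref{alternalo} to the module $\X^{\#}$ (which lies in $\MHG$ together with $\X$, the hypotheses being symmetric with respect to $\#$). This produces the congruence
\begin{equation*}
\xi_{\X^{\#}} \equiv \prod_{i=1}^{d+1} \xi_{a^i(\X^{\#})}^{(-1)^{i+1}} \pmod{\operatorname{image of } K_1(\Lambda(G))}.
\end{equation*}
Next I would invoke the Proposition immediately preceding the theorem, whose hypotheses include Hypothesis \ref{becsH_i}, to conclude that $[a^i(\X^{\#})] = 0$ in $K_0(\MHG)$ for every $i \geq 2$. The compatibility between $K_0(\MHG)$ and $K_1(\Lambda(G)_{S^{\ast}})$ via the connecting map $\partial_G$ of the localisation sequence then shows that each such $\xi_{a^i(\X^{\#})}$ actually lies in the image of $K_1(\Lambda(G))$, so all the factors with $i \geq 2$ can be absorbed into a single error term $\varepsilon_0(\X) \in K_1(\Lambda(G))$.

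The remaining factor $\xi_{a^1(\X^{\#})}$ is computed by Theorem \ref{altalanostetel}: since the kernel of $\varphi\colon \X \to a^1(\X^{\#})$ has trivial class and the cokernel represents $\bigoplus_{v \in P_1 \cup P_2} \Lambda(G) \otimes_{\Lambda(G_{\vcc})} T_p(E)^{\ast}$, one has the identity
\begin{equation*}
[a^1(\X^{\#})] = [\X] + \sum_{v \in P_1 \cup P_2} [\Lambda(G) \otimes_{\Lambda(G_{\vcc})} T_p(E)^{\ast}]
\end{equation*}
in $K_0(\MHG)$. The multiplicativity of characteristic elements on short exact sequences, together with the fact that $\alpha_v$ is by definition the image in $K_1(\Lambda(G)_{S^{\ast}})$ of the characteristic element of $T_p(E)^{\ast}$ over $\Lambda(G_{\vcc})_{S_{\vcc}^{\ast}}$, converts this into the multiplicative relation $\xi_{a^1(\X^{\#})} \equiv \xi_{\X} \prod_{v} \alpha_v$ modulo $K_1(\Lambda(G))$. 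Combining everything and using $\xi_{\X^{\#}} = \xi_{\X}^{\#}$ yields the functional equation \eqref{uj11}.

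The main technical obstacles will be twofold. First, one must justify that inducing the local characteristic element of $T_p(E)^{\ast}$ from $K_1(\Lambda(G_{\vcc})_{S_{\vcc}^{\ast}})$ to $K_1(\Lambda(G)_{S^{\ast}})$ indeed produces the characteristic element of the induced module $\Lambda(G) \otimes_{\Lambda(G_{\vcc})} T_p(E)^{\ast}$; this is a compatibility of the connecting homomorphism with induction and will need the functoriality of the localisation sequence with respect to the inclusion $\Lambda(G_{\vcc}) \hookrightarrow \Lambda(G)$. Second, one must verify the compatibility $\xi_{M^{\#}} = \xi_M^{\#}$ in $K_1(\Lambda(G)_{S^{\ast}})$; this is essentially formal, following from the fact that the involution $\#$ maps the Ore set $S^{\ast}$ to itself and intertwines the natural connecting maps, but it must be checked explicitly so that the final functional equation can be read on the level of $\xi_{\X}$ rather than $\xi_{\X^{\#}}$.
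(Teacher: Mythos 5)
Your proposal is correct and matches the paper's argument, which the authors compress into a single sentence: the paper's proof cites Theorem \ref{altalanostetel} together with the fact that the kernel of $\partial_G\colon K_1(\Lambda(G)_{S^*})\to K_0(\MHG)$ is the image of $K_1(\Lambda(G))$, tacitly relying on Proposition \ref{alternalo} and the vanishing of the higher $a^i$ classes exactly as you spell out. The only point worth sharpening is that $\X^{\#}$ is a \emph{right} module, so one should apply Proposition \ref{alternalo} to $\X$ and then hit the resulting congruence with the $\#$-involution on $K_1(\Lambda(G)_{S^*})$ (which converts $a^i(\X)$ into $\Ext^i_{\Lambda(G)}(\X^{\#},\Lambda(G))$), but you already flag the required compatibility $\xi_{M^{\#}}=\xi_M^{\#}$ as a check, so the substance is right.
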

\begin{proof}
We use Theorem \ref{altalanostetel} and the fact that two elements in
$K_1(\Lambda(G)_{S^*})$ define the same class in the Grothendieck
group $K_0(\mathfrak{M}_H(G))$ if and only if they differ by an
element in $K_1(\Lambda(G))$.
\end{proof}

\section{Compatibility with the conjectured interpolation property of the $p$-adic $L$-function}\label{compatible}

Let us recall at first the Main Conjecture over the
strongly admissible $p$-adic Lie extension $K_\infty/K$. Fix a global minimal Weierstra\ss~equation for
$E$ over the ring of integers $\mathcal{O}_K$ of $K$. We denote by $\Omega_{\pm}(E)$ the periods
of $E$, defined by integrating the N\'eron differential of this
Weierstra{\ss} equation over the $\pm1$ eigenspaces
$H_1(E(\mathbb{C}),\mathbb{Z})^{\pm}$ of complex conjugation. As
usual, $\Omega_-$ is chosen to lie in $i\mathbb{R}$. Moreover, for
any Artin representation $\tau$ of the absolute Galois group of
$K$ let $d^+(\tau)$ and $d^-(\tau)$ denote the dimension
of the subspace of the vector space of $\tau$ on which complex
conjugation acts by $+1$ and $-1$, respectively. Deligne's period
conjecture \cite{De} asserts that
\begin{equation}
\frac{L(E/K,\tau,1)}{\Omega_+(E)^{d^+(\tau)}\Omega_-(E)^{d^-(\tau)}}\in\overline{\mathbb{Q}}.
\end{equation}
As before let $R$ denote the set of rational primes ramifying infinitely in $K_\infty/K$.
We define the modified $L$-function
\begin{equation}
L_R(E/K,\tau,s):=\prod_{v\notin R}P_v(E,\tau,q_v^{-s})^{-1}
\end{equation}
by removing the Euler-factors at primes in $R$. Finally, since $E$ has good ordinary reduction at all the primes $v_p$ dividing $p$, we have
\begin{equation}
P_{v_p}(E,T)=1-a_{v_p}T+|N_{K/\mathbb{Q}}(v_p)|T^2=(1-b_{v_p}T)(1-c_{v_p}T),\hspace{0.5cm}b_{v_p}\in\mathbb{Z}_p^{\times},
\end{equation}
where $|N_{K/\mathbb{Q}}(v_p)|+1-a_{v_p}=\#(\tilde{E}_{v_p}(\mathbb{F}_{|N_{K/\mathbb{Q}}(v_p)|}))$ is the number of points on the curve reduced modulo $v_p$. The analogue of Conjecture 5.7 in \cite{CFKSV} for the extension $\Kcc/K$ is the following
\begin{con}\label{80}
Assume that $E$ has good ordinary reduction at all the primes above
$p$. Then there exists $\mathfrak{L}_E$ in $K_1(\Lambda(G)_{S^*})$
such that, for all Artin representations $\tau$ of $G$, we have
$\mathfrak{L}_E(\tau)\neq\infty$, and
\begin{equation}
\mathfrak{L}_E(\tau^*)=\frac{L_R(E,\tau,1)}{\Omega_+(E)^{d^+(\tau)}\Omega_-(E)^{d^-(\tau)}}\cdot\prod_{v_p\mid p}\varepsilon_{v_p}(\tau)
\cdot\frac{P_{v_p}(\tau^*,b_{v_p}^{-1})}{P_{v_p}(\tau,c_{v_p}^{-1})}\cdot b_{v_p}^{-f_{\tau}},
\end{equation}
where $\varepsilon_{v_p}(\tau)$ denotes the local $\varepsilon$-factor
at $v_p$ attached to $\tau$, and $p^{f_{\tau}}$ is the $p$-part of
the conductor of $\tau$.
\end{con}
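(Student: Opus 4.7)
The statement is the interpolation half of the noncommutative main conjecture of \cite{CFKSV} for the pair $(E,\Kcc/K)$, and in the generality stated it is genuinely open; a proof proposal can therefore only outline the strategy developed by Kato, Burns, Venjakob and their successors. The natural plan is to build $\mathfrak{L}_E$ by assembling classical cyclotomic $p$-adic $L$-functions over the abelian subquotients of $G$. Concretely, for each finite Galois extension $L/K$ inside $\Kcc$ with abelian Galois group $\Gamma_L^*$, Mazur--Swinnerton-Dyer (when $K=\QQ$) and its generalizations produce an element $\mathfrak{L}_{E,L}\in K_1(\Lambda(\Gamma_L^*)_{S_L^*})$ interpolating the values $L_R(E,\chi,1)$ at characters $\chi$ of $\Gamma_L^*$, corrected by the prescribed local factors at the primes above $p$. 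The task is to show that the family $\{\mathfrak{L}_{E,L}\}_L$ glues to a single element of $K_1(\Lambda(G)_{S^*})$.

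First I would set up the localization exact sequence
\begin{equation*}
K_1(\Lambda(G)) \to K_1(\Lambda(G)_{S^*}) \overset{\partial_G}{\to} K_0(\MHG) \to 0
\end{equation*}
together with the norm maps $K_1(\Lambda(\Gamma_L^*)_{S_L^*}) \to K_1(\Lambda(G)_{S^*})$ coming from the surjections $G\twoheadrightarrow \Gamma_L^*$, and verify that $\partial_G$ of any putative $\mathfrak{L}_E$ must equal the characteristic element class $[\X]$ (so that the algebraic side of the main conjecture is in place). The gluing itself follows the strategy of Burns--Kato, Ritter--Weiss and Kakde: describe $K_1(\Lambda(G)_{S^*})$ by an integral $p$-adic logarithm in terms of compatible systems of traces to all abelian subquotients of $G$, and then verify that the family $\{\mathfrak{L}_{E,L}\}_L$ satisfies the Deligne--Ribet--type congruences forcing it to come from such a compatible system. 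Once $\mathfrak{L}_E$ has been constructed, the interpolation formula at a general Artin representation $\tau$ would be deduced by Brauer induction, writing $\tau$ as a virtual sum of characters induced from open subgroups of $G$ and reducing to the abelian case.

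The main obstacle is the congruence step. In the totally real case Kakde and Ritter--Weiss proved the analogous congruences using the explicit Deligne--Ribet construction; for motives attached to non-CM elliptic curves over general strongly admissible $p$-adic Lie extensions no such explicit construction is available, and one must rely on Kato's Euler system (available essentially only for $K=\QQ$) together with a compatible family of Coleman maps for $\Lambda(G_{\vcc})$ at the primes $\vcc\mid p$, whose existence in this generality is itself open. Even granting the construction, checking that the local factors at $p$ take precisely the predicted shape $\varepsilon_{v_p}(\tau)\cdot P_{v_p}(\tau^*,b_{v_p}^{-1})/P_{v_p}(\tau,c_{v_p}^{-1})\cdot b_{v_p}^{-f_\tau}$ requires a delicate local computation matching the Coleman map against the dual exponential map of Bloch--Kato, and it is this combination of global congruences with the correct local normalization at $p$ that makes the conjecture deep rather than formal. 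For this reason the present paper treats $\mathfrak{L}_E$ only as a conjectural object and aims instead at compatibility between the algebraic functional equation (Theorem \ref{functeq}) and the interpolation property stated above.
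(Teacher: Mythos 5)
You have correctly identified that the statement is labelled a \emph{Conjecture} (the analogue of Conjecture 5.7 in \cite{CFKSV} for $\Kcc/K$) and that the paper offers no proof of it; it is taken as an input to Conjecture \ref{81} and to the compatibility discussion in Section \ref{compatible}. There is therefore no proof in the paper to compare against, and your response---treating the statement as open, sketching the Burns--Kato/Ritter--Weiss/Kakde gluing strategy via congruences among cyclotomic $p$-adic $L$-functions over abelian subquotients, and flagging the Euler-system and Coleman-map obstructions---is an accurate description of the state of the art and of why the conjecture remains out of reach. No gap to report; just note that the paper itself never attempts this and only uses the conjecture as a hypothesis when checking that the local factors $\alpha_v$ in Theorem \ref{functeq} are consistent with the predicted interpolation formula.
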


The Main Conjecture of the Iwasawa theory for elliptic curves without complex multiplication over $\Kcc$ is the following (cf.\ Conjecture 5.8 in \cite{CFKSV}).
\begin{con}\label{81}
Assume that $p\geq5$, $E$ has good ordinary reduction at $p$, and
$\X$ belongs to the category $\mathfrak{M}_{H}(G)$. Granted
Conjecture $\ref{80}$, the $p$-adic $L$-function $\mathfrak{L}_E$
in $K_1(\Lambda(G)_{S^*})$ is a characteristic element of $\X$.
\end{con}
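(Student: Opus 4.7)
This is the noncommutative Main Conjecture for $E$ over $\Kcc$ and is one of the central open problems in the area, so my plan is really to sketch the route suggested by \cite{CFKSV} and identify where the functional equation proved in the paper slots in.

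First I would try to construct a candidate $\mathfrak{L}_E$ by descent from the cyclotomic layer. Starting from the best known cases of the cyclotomic Main Conjecture for $E/K$, one has a characteristic element in $K_1(\Lambda(\Gamma)_{S^*})$ interpolating twisted $L$-values over $\Gamma$. Following the Kakde / Ritter--Weiss strategy one would reduce the construction of $\mathfrak{L}_E$ in $K_1(\Lambda(G)_{S^*})$ to a system of congruences relating $p$-adic $L$-functions over the various abelian subquotients of $G$, thereby verifying the interpolation property of Conjecture \ref{80} layer by layer.

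Granting this, the next step is to show that the resulting $\mathfrak{L}_E$ is a characteristic element of $\X$. Here Theorem \ref{functeq} plays the role of a sharp consistency check: it forces $\xi_{\X}^{\#} = \xi_{\X}\,\varepsilon_0(\X)\prod_{v\in P_1\cup P_2}\alpha_v$, and by Conjecture \ref{80} the candidate $\mathfrak{L}_E$ must satisfy the same identity when evaluated at Artin representations. The factors $\alpha_v$ interpolate exactly the ratios of local $L$- and $\varepsilon$-factors at primes in $P_1\cup P_2$ (as explained in the introduction), while primes in $R\setminus(P_1\cup P_2)$ do not contribute because one passes to a finite prime-to-$p$ extension $F_\infty$ of $\Kcc$ over which the Tate module becomes locally defined, and the corresponding local factors die under the composite $K_1(\Lambda(\Gal(F_\infty/K))_{S_F^*}) \to K_0(\MHG)$. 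Matching $\mathfrak{L}_E$ and $\xi_{\X}$ at every Artin representation would then force the quotient $\mathfrak{L}_E\cdot \xi_{\X}^{-1}$ to lie in $K_1(\Lambda(G))$, which is exactly the statement that $\mathfrak{L}_E$ is a characteristic element.

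The main obstacle is the descent step itself. Outside the abelian case there is no Euler system available for twists by arbitrary Artin representations of $G$, and the Kakde-type reduction requires congruences between complex $L$-values of $E$ twisted by Artin characters of abelian subextensions that are not known in this generality. Consequently, the realistic role of the present paper is not to settle Conjecture \ref{81} but to supply the algebraic functional equation on the Selmer side that matches the complex functional equation on the analytic side, that is, a nontrivial compatibility test which any putative $\mathfrak{L}_E$ satisfying Conjecture \ref{80} must automatically pass.
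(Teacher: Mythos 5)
You have correctly identified that Conjecture \ref{81} is exactly that---a conjecture---and the paper offers no proof of it; it is simply stated (following \cite{CFKSV}) as the framework whose consequences the paper explores. Your response is therefore right in spirit: you recognize that this is the noncommutative Main Conjecture, sketch the Kakde/Ritter--Weiss descent strategy as the currently envisaged route, flag the absence of Euler systems and congruences for non-abelian Artin twists as the genuine obstruction, and observe that Theorem \ref{functeq} provides only a consistency check on both sides of the conjecture rather than a proof. Since there is no proof in the paper to compare against, there is nothing further to grade; the only thing worth emphasizing is that a reader asked to ``prove'' a labeled \texttt{con} environment should, as you did, say explicitly at the outset that no proof exists and none is claimed in the source, rather than risk the sketch being mistaken for an argument.
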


It is shown in Proposition 7.1 \cite{Z} that (up to an $\varepsilon$-factor) the value of $\alpha_v$ at Artin representations $\tau$ of $G$ equals the quotient of the local $L$-factor of $E$ twisted by $\tau$ by the local $L$-factor of $E$ twisted by the contragredient representation $\tau^\ast$. This is parallel to the fact that the $p$-adic $L$-function conjecturally interpolates the $L$-values in which all the $L$-factors at primes ramifying infinitely in $\Kcc/K$ are removed. However, in our Theorem \ref{functeq} we only have the modifying factors $\alpha_v$ at primes $v$ in $P_1\cup P_2$, that is at the primes that not only ramify infinitely in $\Kcc/K$ but $E[p^{\infty}]$ is contained in $K_{\infty,\vcc}$. The reason for this is the following:

For a prime $v\nmid p$ ramifying infinitely in $\Kcc/K$ the field $K_{\infty,\vcc}$ is the unique pro-$p$ extension of $K_v$. Therefore $E[p^{\infty}]$ is contained in a finite prime-to-$p$ extension $F_{\infty,\vcc}$ of $K_{\infty,\vcc}$ with Galois group $\Delta:=\Gal(F_{\infty,\vcc}/K_{\infty,\vcc})$, so we may choose a global extension $F_\infty$ of $\Kcc$ with completion $F_{\infty,\vcc}$ at a prime above $\vcc$ such that $\Gal(F_\infty/\Kcc)\cong \Delta$. Now the factor $\alpha_v\in K_1(\Lambda(G_F)_{S^\ast})$ does appear in the functional equation of the characteristic element of $X(E/F_\infty)$ and interpolates the quotients of the local $L$-factors of $E$ twisted by Artin characters of $G_F=\Gal(F_\infty/K)$. Note that the identification $G=G_F/\Delta$ induces a commutative diagram
\begin{align}\label{compmain}
\begin{CD}
K_1(\Lambda(G_F)_{S^\ast}) @>\pi>> K_1(\Lambda(G)_{S^\ast})\\
@V\partial_{G_F} VV @VV\partial_G V\\
K_0(\mathfrak{M}_{H_F}(G_F))
@>{(\cdot)_\Delta}>>
K_0(\mathfrak{M}_H(G))
\end{CD}
\end{align}
as $\Delta$ is finite of order prime to $p$ whence taking $\Delta$-homologies is exact on $\Zp[\Delta]$-modules. 

Now if we assume Conjectures \ref{80} and \ref{81} for the larger extension $F_\infty$ instead of $\Kcc$ then it also implies these conjectures for the smaller field $\Kcc$. Indeed, the Artin representations of $G$ can be viewed as Artin representations of $G_F$ via the quotient map $G_F\twoheadrightarrow G=G_F/\Delta$. So if $\mathfrak{L}_{E/F_\infty}$ has the required interpolation properties then so does $\mathfrak{L}_{E/\Kcc}:=\pi(\mathfrak{L}_{E/F_\infty})$. Moreover, the Main Conjecture over $\Kcc$ follows from the Main Conjecture over $F_\infty$ by the commutativity of the diagram \eqref{compmain}. Now both the functional equation of the characteristic element of $X(E/F_\infty)$ and the numerical computations in \cite{dokdok} predict that when defining the $p$-adic $L$-function one has to remove all the $L$-factors for primes ramifying infinitely in $F_\infty/K$ so this discussion shows that we also have to remove all these $L$-factors. 

However, if $v$ is a prime in $P_0\setminus (P_1\cup P_2)$ then a usual spectral sequence argument shows that $H_0(\Delta,\Lambda(G_F)\otimes_{\Lambda(G_{F,\vcc})}T_p(E)^\ast)\cong \Lambda(G)\otimes_{\Lambda(G_\vcc)} (T_p(E)^\ast_\Delta)$. Here $T_p(E)^\ast_\Delta$ is dual to $E[p^\infty]^\Delta$. In particular, it is finite as we assumed that $v\notin P_1\cup P_2$ whence $E[p^\infty]$ is not contained in $K_{\infty,\vcc}$. So we see that the class of $\Lambda(G)\otimes_{\Lambda(G_\vcc)} (T_p(E)^\ast_\Delta)$ vanishes in $K_0(\MHG)$. Therefore by the commutativity of the diagram \eqref{compmain} above $\pi(\alpha_v)$ lies in the image of $K_1(\Lambda(G))$. However, for the value of $\pi(\alpha_v)$ at an Artin representation $\tau$ factoring through the quotient $G$ of $G_F$ we clearly have $\pi(\alpha_v)(\tau)=\alpha_v(\tau)$. So $\pi(\alpha_v)$ still interpolates the same quotients of local $L$-factors, even though in this case its image in $K_0(\MHG)$ is trivial so there is no need to include these factors in the algebraic functional equation.

\section{Central torsion Iwasawa-modules}\label{central}

In this section we are going to assume that $G=H\times Z$ is a pro-$p$ $p$-adic Lie-group without elements of order $p$ such that the centre $Z(G)$ is $Z\cong\mathbb{Z}_p$ and the Lie algebra $\Lie(H)$ of $H$ is split semisimple over $\mathbb{Q}_p$. For example any open subgroup of $\mathrm{GL}_2(\Zp)$ contains a finite index subgroup $G$ with these properties.

\begin{lem}\label{2}
Let $M$ be a finitely generated central torsion $\Lambda(G)$-module without $p$-torsion. Then $M$ represents the trivial element in the $K_0(\mathfrak{M}_{H}(G))$ if and only if it is $\Lambda(H)$-torsion.
\end{lem}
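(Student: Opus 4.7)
The plan is to use the centrality of the annihilator together with $p$-torsion freeness to land $M$ in $\NHG$, and then to read the equivalence off the $\Lambda(H)$-rank homomorphism on $K_0(\NHG)$.

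First I would argue that under our hypotheses $Z(\Lambda(G))=\Lambda(Z)$, since $\Lie(H)$ being split semisimple forces $Z(\Lambda(H))=\Zp$. Consequently $M$ is annihilated by a nonzero $z\in\Lambda(Z)\cong\Zp[[T]]$. Weierstrass preparation gives $z=u\,p^a\,g$ with $u$ a unit, $a\geq 0$, and $g$ a distinguished polynomial; since $M$ is $p$-torsion free, the injectivity of $p^a$ on $M$ forces $gM=0$. As $\Lambda(G)/g\Lambda(G)\cong\Lambda(H)\otimes_{\Zp}\Lambda(Z)/(g)$ is free of rank $\deg g$ over $\Lambda(H)$, the module $M$ becomes finitely generated over $\Lambda(H)$, i.e.\ $M\in\NHG$.

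For the forward direction, Lemma \ref{ptors} gives $K_0(\MHG)=K_0(\mathfrak{M}(G,p))\oplus K_0(\NHG)$. Since $M$ is $p$-torsion free the first component of $[M]$ is $[M(p)]=0$, so $[M]=0$ in $K_0(\MHG)$ amounts to $[M]=0$ in $K_0(\NHG)$. The $\Lambda(H)$-rank is additive on short exact sequences in $\NHG$ and so defines a homomorphism $\rk_{\Lambda(H)}\colon K_0(\NHG)\to\ZZ$; hence $[M]=0$ forces $\rk_{\Lambda(H)}(M)=0$, i.e.\ $M$ is $\Lambda(H)$-torsion.

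Conversely, suppose $M$ is $\Lambda(H)$-torsion. Put $\bar\Lambda:=\Lambda(G)/g\Lambda(G)$, a finitely generated $\Lambda(H)$-algebra which is free of rank $\deg g$ as a $\Lambda(H)$-module, so that $M$ is a finitely generated $\bar\Lambda$-torsion module. By Ardakov's structure results \cite{A}, $\Lambda(H)$ is a Noetherian Auslander regular local domain whose only finitely generated projectives are free, so $K_0(\Lambda(H))\cong\ZZ$, and the analogous properties transfer to the semilocal finite extension $\bar\Lambda$. Taking a finite projective resolution $0\to P_n\to\cdots\to P_0\to M\to 0$ over $\bar\Lambda$, each $P_i$ is stably free, hence $[P_i]=\rk_{\bar\Lambda}(P_i)\cdot[\bar\Lambda]$ in $K_0(\NHG)$ under the inclusion $\bar\Lambda\text{-mod}\hookrightarrow\NHG$. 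Taking alternating sums, $[M]=\rk_{\bar\Lambda}(M)\cdot[\bar\Lambda]=0$ because $M$ is $\bar\Lambda$-torsion. The main obstacle lies precisely here: the structural input that classes in $K_0(\bar\Lambda)$ are detected by ranks rests on Ardakov's theorems about Iwasawa algebras of pro-$p$ groups with split semisimple Lie algebra. A more direct approach, resolving $M$ as a $\Lambda(H)$-module and trying to lift equivariantly to $K_0(\NHG)$, is less clean because $\Lambda(H)$-projective resolutions need not respect the central $Z$-action.
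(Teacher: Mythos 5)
Your forward direction is fine and matches the paper in substance: you use the annihilating distinguished polynomial (obtained from $p$-torsion-freeness) to place $M$ in $\NHG$, then combine the splitting $K_0(\MHG) = K_0(\mathfrak{M}(G,p)) \oplus K_0(\NHG)$ from Lemma \ref{ptors} with the $\Lambda(H)$-rank homomorphism.

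The converse direction has a genuine gap. You work with $\bar{\Lambda} = \Lambda(G)/g\Lambda(G) \cong (\Lambda(Z)/(g))\bs H\js$ and invoke a finite projective $\bar{\Lambda}$-resolution of $M$. But $\bar{\Lambda}$ need not have finite global dimension: this requires the one-dimensional complete local domain $\Lambda(Z)/(g)$ to be regular, i.e.\ to coincide with the ring of integers $\mathcal{O}$ of $(\Lambda(Z)/(g))[1/p]$, and that can fail --- for instance $g = T^2-p^3$ gives $\Zp[T]/(T^2-p^3) \subsetneq \Zp[\sqrt{p}]$, a non-regular local domain. When it fails, a $p$-torsion-free $\Lambda(H)$-torsion module such as $\mathcal{O}$ with trivial $H$-action already has infinite projective dimension over $\bar{\Lambda}$, so the resolution you want need not exist. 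The reference to \cite{A} does not supply it either: Ardakov's results concern $\Lambda(H)$ itself, and regularity of $\Lambda(H)$ does not transfer to the coefficient extension $\bar{\Lambda}$. Note also that the obstruction is not, as you suggest, whether $K_0(\bar{\Lambda})$ is detected by ranks (that is automatic since $\bar{\Lambda}$ is local); it is whether $M$ admits a finite projective resolution at all.

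The paper's proof is built precisely to avoid this. It inverts $p$ first, so that $\Lambda(G)/(f)[1/p] \cong \mathcal{O}\bs H\js[1/p]$ is regular with $G_0 \cong K_0 \cong \mathbb{Z}$ regardless of what $\Lambda(Z)/(f)$ looks like integrally; it then identifies finitely generated $\Lambda(G)/(f)[1/p]$-modules with a full subcategory of the quotient $\mathfrak{M}_H(G)/\mathfrak{M}(G,p)$, reads off that the image of $[M]$ vanishes there, and closes via Quillen's localization sequence together with Lemma \ref{ptors}. Your argument would be repaired by making this same passage to $[1/p]$.
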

\begin{proof}
One direction follows from the existence of a homomorphism
\begin{equation*}
K_0(\mathfrak{M}_{H}(G))\rightarrow\mathbb{Z}
\end{equation*}
sending modules to their $\Lambda(H)$-rank. 

For the other direction assume
that $M$ is both $\Lambda(H)$- and $\Lambda(Z)$-torsion and choose (by the Weierstra\ss\ preparation theorem noting that $M$ has no $p$-torsion) a
distinguished polynomial $f(T)$ in $\mathbb{Z}_p[T]\subset \mathbb{Z}_p\bs T\js\cong\Lambda(Z)$
annihilating $M$. We may assume without loss of generality that $f$ is irreducible. Therefore $\Lambda(Z)/(f)[1/p]$ is a finite extension of $\mathbb{Q}_p$ whose ring of integers we denote by $\mathcal{O}$ (even though we may have $\Lambda(Z)/(f)\subsetneq \mathcal{O}$). Hence there is an isomorphism $\Lambda(G)/(f)[1/p]\cong \mathcal{O}\bs H\js[1/p]$. As $\mathcal{O}\bs H\js$ is a regular local ring, we obtain that $\Lambda(G)/(f)[1/p]$ has finite global dimension and that $G_0(\Lambda(G)/(f)[1/p])\cong K_0(\Lambda(G)/(f)[1/p])\cong \mathbb{Z}$. Since $M$ is torsion as a $\Lambda(H)$-module, $M[1/p]=\mathbb{Q}_p\otimes_{\mathbb{Z}_p}M$ is also torsion as a $\Lambda(G)/(f)[1/p]$-module. In particular, the class of $M[1/p]$ in $G_0(\Lambda(G)/(f)[1/p])$ vanishes. Moreover, note that the category of finitely generated $\Lambda(G)/(f)[1/p]$-modules is equivalent to the quotient category of finitely generated $\Lambda(G)/(f)$-modules by the Serre subcategory of $p$-power torsion modules. Since $\Lambda(G)/(f)$ is finite over $\Lambda(H)$, this quotient category is equivalent to the full subcategory of $\mathfrak{M}_H(G)/\mathfrak{M}(G,p)$ consisting of those objects that are annihilated by $f$. So we obtain that the image of $[M]$ under the natural homomorphism $K_0(\MHG)\to K_0(\mathfrak{M}_H(G)/\mathfrak{M}(G,p))$ is zero. By Quillen's localization exact sequence in $K$-theory (Thm.\ 5 in \cite{Q}) this shows that $[M]$ is in the image of the natural map $K_0(\mathfrak{M}(G,p))\to K_0(\MHG)$ therefore we deduce $[M]=0$ by Lemma \ref{ptors} noting that $M$ lies in $\mathfrak{N}_H(G)$ as it has no $p$-torsion.
\end{proof}

\begin{lem}\label{ext}
Let $M$ be a $\Lambda(Z)$-torsion module in the category $\mathfrak{M}_{H}(G)$. Then $\Ext^1_{\Lambda(G)}(M^{\#},\Lambda(G))$ is also $\Lambda(Z)$-torsion.
\end{lem}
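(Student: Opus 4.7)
The plan is to exploit the centrality of $\Lambda(Z)$ in $\Lambda(G)$ together with the naturality of $\Ext$. First, I would reduce to the existence of a single nonzero annihilator in $\Lambda(Z)$: since $M$ is finitely generated over $\Lambda(G)$, I pick generators $m_1,\ldots,m_n$ each killed by some nonzero $f_i\in\Lambda(Z)$; because $f_i$ is central in $\Lambda(G)$, it annihilates the entire submodule $\Lambda(G)m_i$, hence $f:=f_1\cdots f_n$ is a nonzero element of $\Lambda(Z)$ with $fM=0$. Since $Z$ is abelian, the anti-involution $\#$ restricts to a ring automorphism of $\Lambda(Z)$, so $f^\#\in\Lambda(Z)$ is also nonzero, and remains central in $\Lambda(G)$.

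The second step is to transfer the annihilation to $M^\#$. By the definition of the opposite module, for any $m\in M^\#$ one has $m\cdot f^\# = (f^\#)^\# m = fm = 0$, so $f^\#$ kills $M^\#$ as a right $\Lambda(G)$-module.

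Finally, I would apply the standard functoriality argument for central elements. Choose a projective resolution $P^\bullet\to M^\#$ in the category of right $\Lambda(G)$-modules. Since $f^\#$ is central, right multiplication by $f^\#$ is a right-$\Lambda(G)$-linear chain endomorphism of $P^\bullet$ lifting the zero map on $M^\#$, and is therefore null-homotopic. Applying $\Hom_{\Lambda(G)}(-,\Lambda(G))$ turns this into the endomorphism of $\Hom_{\Lambda(G)}(P^\bullet,\Lambda(G))$ given on values by right multiplication by $f^\#$ on $\Lambda(G)$; by centrality this equals left multiplication by $f^\#$, which is precisely the $\Lambda(G)$-module action used to define the structure on $\Ext^i_{\Lambda(G)}(M^\#,\Lambda(G))$. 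Passing to cohomology shows that $f^\#$ annihilates $\Ext^i_{\Lambda(G)}(M^\#,\Lambda(G))$ for every $i\geq 0$, so in particular $\Ext^1_{\Lambda(G)}(M^\#,\Lambda(G))$ is $\Lambda(Z)$-torsion. The one subtlety worth flagging is the bookkeeping between the left and right $\Lambda(G)$-module structures; once this is in place, the rest of the argument is purely formal.
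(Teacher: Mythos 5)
Your proof is correct, and it takes a genuinely different route from the paper's. The paper first uses the long exact sequence of $\Ext_{\Lambda(G)}(\cdot,\Lambda(G))$ to reduce to the case where $M$ is killed by a single prime element $f\in\Lambda(Z)$; it then covers $M^{\#}$ by a finite free $\Lambda(G)/(f^{\#})$-module and applies the long exact sequence a second time to reduce to $M^{\#}=\Lambda(G)/(f^{\#})$, for which it computes directly that $\Ext^1_{\Lambda(G)}(\Lambda(G)/(f^{\#}),\Lambda(G))\cong\Lambda(G)/(f^{\#})$, which is visibly $\Lambda(Z)$-torsion. Your argument instead invokes the standard homological fact that a central annihilator passes to all $\Ext$ groups: you first note that $\Ann_{\Lambda(Z)}(M)\neq 0$ (using finite generation, centrality, and that $\Lambda(Z)\cong\Zp\bs T\js$ is a domain so the product $f=f_1\cdots f_n$ is nonzero), that $\#$ is a bijection taking $\Lambda(Z)$ to itself so $f^{\#}\neq 0$ kills $M^{\#}$, and then run the null-homotopy argument, carefully matching the right-module action on the resolution with the left-module action on $\Hom_{\Lambda(G)}(-,\Lambda(G))$ via centrality. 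Your version is shorter and slightly more general, as it handles $\Ext^i$ for every $i\geq 0$ and bypasses the reduction to prime elements; the paper's version gives an explicit description of $\Ext^1$ in the key special case $\Lambda(G)/(f^{\#})$, which can be informative in its own right. Both proofs are complete.
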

\begin{proof}
By the long exact sequence of $\Ext(\cdot,\Lambda(G))$ we may assume without loss of generality that $M$ is killed by a prime element $f$ in the commutative algebra $\mathbb{Z}_p\bs T\js\cong\Lambda(Z)$, ie.\ $f$ is either a distinguished polynomial or $f=p$. Since $M^{\#}$ is then killed by $f^{\#}$ and finitely generated over $\Lambda(G)$, it admits a surjective $\Lambda(G)$-homomorphism from a finite free module over $\Lambda(G)/(f^{\#})$. So again by the long exact sequence of $\Ext(\cdot,\Lambda(G))$ it suffices to show the statement for $M^{\#}=\Lambda(G)/(f^{\#})$. However, we have  $\Ext^1_{\Lambda(G)}(\Lambda(G)/(f^{\#}),\Lambda(G))\cong \Lambda(G)/(f^{\#})$ therefore the statement.
\end{proof}

\begin{lem}\label{homol}
Taking $H$-coinvariants induces a homomorphism on the $K_0$-groups
\begin{eqnarray*}
H_*(H,\cdot)\colon K_0(\mathfrak{M}_{H}(G))&\to& K_0(\Lambda(Z)-\mathrm{tors})\\
M&\mapsto& \sum_{i=0}^{\dim H+1} (-1)^i[H_i(H,M)]
\end{eqnarray*}
where $\Lambda(Z)-\mathrm{tors}$ denotes the category of finitely generated torsion $\Lambda(Z)$-modules.
\end{lem}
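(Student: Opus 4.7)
The plan is to verify three things in turn: that each $H_i(H,M)$ is a finitely generated $\Lambda(Z)$-module, that it is moreover $\Lambda(Z)$-torsion, and that the alternating sum is additive on short exact sequences (so descends to a well-defined homomorphism out of $K_0(\MHG)$).

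First I would establish finite generation by picking a finite resolution $P_\bullet\to M$ by finitely generated free $\Lambda(G)$-modules---available since $\Lambda(G)$ is Noetherian of finite global dimension $\dim G+1$. Because $G=H\times Z$, the augmentation identifies $\Zp\otimes_{\Lambda(H)}\Lambda(G)\cong \Lambda(Z)$, and so $\Zp\otimes_{\Lambda(H)}P_\bullet$ is a bounded complex of finitely generated free $\Lambda(Z)$-modules whose homology computes $H_*(H,M)$. Noetherianness of $\Lambda(Z)$ then yields finite generation of each $H_i(H,M)$. Vanishing for $i>\dim H+1$ holds because the projective dimension of $\Zp$ over $\Lambda(H)$ is bounded by the global dimension $\dim H+1$ of $\Lambda(H)$, which is finite since $H$ has no element of order $p$.

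Next I would show $\Lambda(Z)$-torsion using the defining short exact sequence $0\to M(p)\to M\to M/M(p)\to 0$ attached to the category $\MHG$. By Noetherianness $M(p)$ is finitely generated, hence killed by some $p^n$; consequently each $H_i(H,M(p))$ is killed by $p^n$, and so is $\Lambda(Z)$-torsion ($p$ being a non-zero-divisor in the domain $\Lambda(Z)$). On the other hand $M/M(p)$ lies in $\NHG$, so resolving $\Zp$ by a finite complex of finitely generated free $\Lambda(H)$-modules and tensoring with $M/M(p)$ over $\Lambda(H)$ produces a finite complex of finitely generated $\Zp$-modules; its homology $H_i(H,M/M(p))$ is therefore finitely generated over $\Zp$, hence of $\Lambda(Z)$-rank zero, hence $\Lambda(Z)$-torsion. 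The long exact sequence of $H$-homology then sandwiches $H_i(H,M)$ between subquotients of these two torsion modules, so $H_i(H,M)$ is itself $\Lambda(Z)$-torsion.

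Finally, additivity on short exact sequences is routine: for $0\to A\to B\to C\to 0$ in $\MHG$ the long exact sequence of $H$-homology is bounded (vanishing below degree $0$ and above degree $\dim H+1$), and the alternating sum of classes along any bounded long exact sequence vanishes in the relevant $K_0$-group, producing
\begin{equation*}
\sum_{i}(-1)^i[H_i(H,B)]=\sum_{i}(-1)^i[H_i(H,A)]+\sum_{i}(-1)^i[H_i(H,C)]
\end{equation*}
in $K_0(\Lambda(Z)-\mathrm{tors})$. I expect the torsion step to be the main content of the argument: it is precisely there that the hypothesis $M\in\MHG$ (as opposed to mere finite generation over $\Lambda(G)$) is used essentially, via the devissage into a bounded $p$-torsion part and a $\Lambda(H)$-finitely generated part.
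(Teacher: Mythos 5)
Your proposal is correct and follows essentially the same route as the paper: both hinge on the devissage $0\to M(p)\to M\to M/M(p)\to 0$, observing that $H_i(H,M(p))$ is $p$-power torsion while $H_i(H,M/M(p))$ is finitely generated over $\Zp$ (hence of $\Lambda(Z)$-rank zero), and then invoking the long exact sequence of $H$-homology and the finiteness of the global dimension of $\Lambda(H)$. The only difference is that you spell out finite generation over $\Lambda(Z)$ and additivity of the alternating sum explicitly, steps the paper leaves implicit.
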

\begin{proof}
First of all note that since we have $Z\cong\mathbb{Z}_p$, a finitely generated $\Lambda(Z)$-module $N$ belongs to $\mathfrak{M}_1(Z)$ if and only if it has finite $\mathbb{Z}_p$-rank or, equivalently, if $N/N(p)$ is finitely generated over $\mathbb{Z}_p$. On the other hand, if $M$ lies in $\mathfrak{M}_{H}(G)$ then $H_i(H,M(p))$ is killed by a power of $p$ and $H_i(H,M/M(p))$ is finitely generated over $\mathbb{Z}_p$. In particular both are $\Lambda(Z)$-torsion. The statement follows from the long exact sequence of $H$-homology noting that $H$ has $p$-cohomological dimension $\dim H+1$.
\end{proof}

\section{Selmer groups that are not central torsion}

In this section we are going to assume that $G=H\times Z$ is a compact pro-$p$ $p$-adic Lie-group without elements of order $p$ such that the centre $Z(G)$ is $Z\cong\mathbb{Z}_p$ and the Lie algebra $\Lie(H)$ of $H$ is split semisimple over $\mathbb{Q}_p$. For example any open subgroup of $\mathrm{GL}_2(\Zp)$ contains a finite index subgroup $G$ with these properties.

\begin{pro}
Let $E$ be an elliptic curve without complex multiplication having good ordinary reduction at all the primes above $p \ge 5$. Let $K$ be a number field and $\Kcc$ a strongly admissible extension. Assume that $X(E/\Kcc)$ is $\Lambda(G)$-torsion and that the set $P_1\cup P_2$ is nonempty. Then $\X$ is not annihilated by any element of $\Lambda(Z)$.
\end{pro}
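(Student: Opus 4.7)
The proof goes by contradiction: assume $\X$ is annihilated by some non-zero $f\in\Lambda(Z)$, and derive a contradiction by localising at the central Ore set $\Lambda(Z)\setminus\{0\}$. By Lemma \ref{ext} the module $\Ext^1_{\Lambda(G)}(\X^\#,\Lambda(G))$ is then also $\Lambda(Z)$-torsion, and consequently so is the cokernel $C$ of the pairing $\varphi$ of Theorem \ref{altalanostetel} (being a quotient of this $\Ext$-group). Theorem \ref{altalanostetel} now supplies the key identity
\[
[C]=\sum_{v\in P_1\cup P_2}[M_v]\qquad\text{in }K_0(\MHG),\qquad M_v:=\Lambda(G)\otimes_{\Lambda(G_{\vcc})}T_p(E)^\ast,
\]
and the right-hand sum is non-empty by hypothesis.

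The decisive local calculation is to show $Z\cap G_{\vcc}=\{1\}$ for every $v\in P_1\cup P_2$. Since $E[p^\infty]\subseteq K_{\infty,\vcc}$, the subgroup $G_{\vcc}$ acts trivially on $T_p(E)$. A choice of $\Zp$-basis of $T_p(E)$ realises $G$ inside $\GL_2(\Zp)$, and the centre $Z$ then acts through scalars via a continuous character $\chi\colon Z\cong\Zp\to\Zp^\times$. This $\chi$ is non-trivial, since $Z$ cannot lie in the kernel of the action $G\to\Aut(T_p(E))$---otherwise both $Z$ and $G_{\vcc}$ would act trivially, which combined with the non-CM hypothesis on $E$ and the strong admissibility of $\Kcc/K$ leads to a contradiction. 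Since every non-trivial continuous character of $\Zp$ has trivial kernel, it follows that $Z\cap G_{\vcc}\subseteq\ker\chi=\{1\}$. Consequently $Z$ acts freely on the coset space $G/G_{\vcc}$, and $M_v$ contains a $\Lambda(Z)$-free submodule of rank two, namely the $\Lambda(Z)$-span of $1\otimes e_1,1\otimes e_2$ for any $\Zp$-basis $\{e_1,e_2\}$ of $T_p(E)^\ast$.

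To conclude, I apply the exact functor $M\mapsto M\otimes_{\Lambda(Z)}\mathrm{Frac}(\Lambda(Z))$, which annihilates $\Lambda(Z)$-torsion modules and hence descends to a homomorphism from $K_0(\MHG)$ to the $K_0$-group of finitely generated modules over $R:=\Lambda(G)\otimes_{\Lambda(Z)}\mathrm{Frac}(\Lambda(Z))$. Under this map $[C]$ is sent to $0$, whereas by the previous paragraph each $M_v\otimes_{\Lambda(Z)}\mathrm{Frac}(\Lambda(Z))$ is a non-zero $\mathrm{Frac}(\Lambda(Z))$-vector space of dimension at least two. Composing further with the $\mathrm{Frac}(\Lambda(Z))$-dimension---a non-negative $\ZZ$-valued homomorphism on $K_0$ of finitely generated $R$-modules---the left-hand side vanishes while the right-hand side takes a strictly positive value, yielding the contradiction. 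The hardest step, and the one I expect to be the main obstacle, is the intermediate claim $Z\cap G_{\vcc}=\{1\}$ for each $v\in P_1\cup P_2$; everything else is a routine bookkeeping with the results of Sections 2 and 5.
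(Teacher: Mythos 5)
Your proposal contains a genuine gap, and I will explain why the approach cannot be repaired as stated.

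First, a factual error: the statement that ``$G_{\vcc}$ acts trivially on $T_p(E)$'' is false. The inclusion $E[p^\infty]\subseteq K_{\infty,\vcc}$ means the absolute Galois group of $K_{\infty,\vcc}$ acts trivially on $T_p(E)$, equivalently that the local representation $\Gal(\overline{K_v}/K_v)\to\Aut(T_p(E))$ factors through $G_{\vcc}=\Gal(K_{\infty,\vcc}/K_v)$; but $G_{\vcc}$ itself acts nontrivially (via the Tate curve for $v\in P_1$, via Frobenius for $v\in P_2$). Your argument for $Z\cap G_{\vcc}=\{1\}$ therefore collapses. (The conclusion $Z\cap G_{\vcc}=\{1\}$ is nevertheless correct, but for a different reason: $Z\cap G_{\vcc}\subseteq Z(G_{\vcc})$ and $G_{\vcc}\cong\Zp\rtimes_{\chi_{v,\mathrm{cyc}}}\Zp$ has trivial centre.) Separately, your argument assumes $G$ itself acts on $T_p(E)$, which requires $E[p^\infty]\subseteq\Kcc$; this fails in the paper's main application where $\Kcc=\mathbb{Q}(A[5^\infty])$ for $A\neq E$.

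The fatal flaw, however, is in the final step. After applying $-\otimes_{\Lambda(Z)}\mathrm{Frac}(\Lambda(Z))$, one lands in finitely generated modules over $R=\Lambda(G)\otimes_{\Lambda(Z)}\mathrm{Frac}(\Lambda(Z))$; such modules are in general infinite-dimensional over $\mathrm{Frac}(\Lambda(Z))$ (already $R$ itself is, and so is $M_v\otimes_{\Lambda(Z)}\mathrm{Frac}(\Lambda(Z))$, which is free of countably infinite rank since $Z\backslash G/G_{\vcc}$ is infinite). There is no $\ZZ$-valued ``$\mathrm{Frac}(\Lambda(Z))$-dimension'' homomorphism on $K_0$ of this category. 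The only natural rank available is the $Q(R)$-rank, which sends both $[C]$ and $[\bigoplus M_v]$ to $0$ (as $M_v$ is $\Lambda(G)$-torsion, hence so is its localisation), so it detects nothing. In short: the $\Lambda(Z)$-rank is not an invariant of the class in $K_0(\MHG)$, and localising at $\Lambda(Z)\setminus\{0\}$ cannot separate $[C]$ from $[\bigoplus M_v]$, which are a priori the \emph{same} class. The paper instead applies the homomorphism $H_*(H,\cdot)$ of Lemma 5.4 to land in $K_0(\Lambda(Z)\text{-tors})$, computes the local $H\cap G_{\vcc}$-homology of $T_p(E)^\ast$ explicitly, and then shows the alternating sum is nonzero by observing that $Z$ acts via distinct powers $\chi_{\mathrm{cyc}}^{\mp k}$ ($k=|P_1(\Kc)|+2|P_2(\Kc)|>0$) on the determinants of the $H_0$- and $H_1$-contributions; this twist argument is the missing ingredient your proposal needs but does not supply.
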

\begin{proof}
We prove by contradiction and assume that $\X$ is $\Lambda(Z)$-torsion. We proceed in $3$ steps.

\emph{Step 1.} By Lemma \ref{ext} $\Ext^1(\X^{\#},\Lambda(G))$ is also $\Lambda(Z)$-torsion. On the other hand, Theorem 5.2 in \cite{Z} provides us with $\Lambda(G)$-homomorphism
\begin{equation*}
\varphi:\X\rightarrow \Ext^1(\X^{\#},\Lambda(G))
\end{equation*}
such that $\Ker(\varphi)$ is finitely generated over $\mathbb{Z}_p$ (so it represents the trivial element in $\mathfrak{M}_H(G)$) and $\Coker(\varphi)$ represents the same element in $\mathfrak{M}_H(G)$ as
\begin{equation}
\bigoplus_{v\in P_1\cup P_2}\Lambda(G)\otimes_{\Lambda(G_{v_\infty})}T_p(E)^{\ast}=:\bigoplus_{v\in P_1\cup P_2}M_v\ .\label{1}
\end{equation}

Since the module in \eqref{1} has no $p$-torsion, we deduce that $\Coker(\varphi)(p)$ has trivial class in $K_0(\mathfrak{M}_{H}(G))$ by Lemma \ref{ptors}. We are going to show that \eqref{1} is on one hand $\Lambda(H)$-torsion, on the other hand, it does not have a trivial class in $K_0(\mathfrak{M}_{H}(G))$. This will contradict to Lemma \ref{2}. 

\emph{Step 2.} In order to show that the class of \eqref{1} is nonzero in $K_0(\mathfrak{M}_{H}(G))$, we apply the homomorphism $H_*(H,\cdot)$ defined in Lemma \ref{homol} and show that its image 
\begin{equation}\label{classhom}
[H_*(H,\bigoplus_{v\in P_1\cup P_2}M_v)]=\sum_{v\in P_1\cup P_2}\sum_{i=0}^{\dim H +1} (-1)^i[H_i(H,M_v)]
\end{equation}
is nonzero, but has rank $0$ over $\mathbb{Z}_p$. The latter implies that \eqref{1} is $\Lambda(H)$-torsion. 

To compute the $\Lambda(Z)$-characteristic ideal of the right hand side of \eqref{classhom} we have the following 
\begin{lem}\label{spectral}
For any finitely generated $\Lambda(G_{v_\infty})$-module $N$ there is an isomorphism 
\begin{align}
H_i(H,\Lambda(G)\otimes_{\Lambda(G_{v_\infty})}N)\cong
\Lambda(G/H)\otimes_{\Lambda(G_{v_\infty}/(H\cap G_{v_\infty}))}H_i(H\cap G_{v_\infty},N)
\end{align}
of $\Lambda(G/H)$-modules.
\end{lem}
\begin{proof}
The commutative diagram 
\begin{equation*}
\begin{CD}
G_{v_\infty}  @>>>  G \\
@VVV  @VVV\\
G_{v_\infty}/(H\cap G_{v_\infty}) @>>> G/H\\
\end{CD}
\end{equation*}
induces two spectral sequences 
\begin{align*}
E^2_{p,q}(N)&=\Tor_p^{\Lambda(G)}(\Lambda(G/H),\Tor_q^{\Lambda(G_{v_\infty})}(\Lambda(G),N))\\
E^2_{p,q}(N)&=\Tor_p^{\Lambda(G_{v_\infty}/(H\cap G_{v_\infty}))}(\Lambda(G/H),\Tor_q^{\Lambda(G_{v_\infty})}(\Lambda(G_{v_\infty}/(H\cap G_{v_\infty})),N))
\end{align*}
both computing $ \Tor_{p+q}^{\Lambda(G_{v_\infty})}(\Lambda(G/H),N)$. The result follows noting that $\Lambda(G)$ (respectively $\Lambda(G/H)$) is flat over $\Lambda(G_{v_\infty})$ (respectively over $\Lambda(G_{v_\infty}/(H\cap G_{v_\infty}))$).
\end{proof}

\emph{Step 3.} By Lemma \ref{spectral} we are reduced to computing the local homology groups $H_i(H\cap G_{v_\infty},T_p(E)^{\ast})$. As the pro-$p$ extension $K_\infty/K$ ramifies infinitely at the prime $q$ and $K(\mu_{p^\infty})\subseteq K_\infty$, the extension $K_{\infty,v_\infty}/K_v$ is the maximal pro-$p$ extension of $K_v$. In particular, we have  $G_{v_\infty},\cong H_{v_\infty}\rtimes \Gamma_{\vc}$ with $H_\vcc=H\cap G_\vcc\cong\Gamma_\vc \cong \mathbb{Z}_p$ such that the conjugation action of $\Gamma_\vc$ on $H_\vcc$ is given by the $p$-adic cyclotomic character $\chi_{v,cyc}$ over the local field $K_v$.

First we assume that $v\in P_1$. By the theory of the Tate curve $E[p^{\infty}]$ is isomorphic to $(\mu_{p^{\infty}}\times t^{\mathbb{Z}/{p^\infty}})/t^\mathbb{Z}$ as a $\Gal(\overline{K_v}/K_v)$-module for some element $t\in K_v^\times$ with $|t|_v<1$. Hence we have there exists a $\mathbb{Z}_p$-basis of $T_p(E)$ inducing an inclusion $G_\vcc\leq\GL_2(\mathbb{Z}_p)$ such that $H_\vcc\leq H_{\vcc,1}:=\begin{pmatrix}1&\mathbb{Z}_p\\ 0& 1\end{pmatrix}\leq \GL_2(\mathbb{Z}_p)$.

Therefore the local $H_{\vcc,1}\rtimes\Gamma_\vc$-module  $T_p(E)^{\ast}=\Hom_{\mathbb{Z}_p}(T_p(E),\mathbb{Z}_p)\cong T_p(E)(-1)$ fits into the exact sequence 
\begin{equation*}
0\to X\mathbb{Z}_p\bs X\js \to X^{-1}\mathbb{Z}_p\bs X\js \to T_p(E)^{\ast}\to 0
\end{equation*}
where we identified $\mathbb{Z}_p\bs X\js $ with $\Lambda(H_{\vcc,1})$. Since $H_\vcc$ has finite index in $H_{\vcc,1}$ the above is a projective resolution of $T_p(E)^{\ast}$ as a $\Lambda(H_\vcc)$-module. Hence we may compute explicitely its $H_\vcc$-homology as a $\Gamma_\vc$-module to obtain isomorphisms 
\begin{eqnarray}
H_0(H_\vcc,T_p(E)^{\ast})/  H_0(H_\vcc,T_p(E)^{\ast})(p)&\cong& \mathbb{Z}_p(-1)\ ;\label{h0}\\
H_1(H_\vcc,T_p(E)^{\ast})/  H_1(H_\vcc,T_p(E)^{\ast})(p)&\cong& \mathbb{Z}_p(1)\ ;\label{h1}
\end{eqnarray}
and $H_i(H_\vcc,T_p(E)^{\ast})=0$ for $i>1$. Moreover, the groups $H_i(H_\vcc,T_p(E)^{\ast})(p)$ ($i=0,1$) are finite therefore represent the trivial element in $K_0(\Lambda(\Gamma_{\vc})-\mathrm{tors})$ by Lemma \ref{veges}.

Now we turn to the case when $v\in P_2$. Since $E$ has good reduction at $v$, the module $T_p(E)^{\ast}$ is unramified at $v$, ie.\ $H_\vcc$ acts trivially on $T_p(E)^{\ast}$. Moreover, as $\Gamma_\vc$ acts via the cyclotomic character on $H_\vcc$ we obtain that 
\begin{equation}
H_0(H_\vcc,T_p(E)^{\ast})\cong T_p(E)^{\ast}\text{ and }H_1(H_\vcc,T_p(E)^{\ast})\cong T_p(E)^{\ast}(1)\cong T_p(E)\ .\label{qP_2}
\end{equation}

By the local and global Weil pairings, the local, resp.\ global cyclotomic characters $\chi_{v,cyc}$ and $\chi_{cyc}$ both factor through the determinant map on $\GL_2(\mathbb{Z}_p)$ and therefore are independent of the choice of a $\mathbb{Z}_p$-basis of $T_p(E)^{\ast}$. Therefore for any prime $v$ the composed map $\Gamma_\vc\hookrightarrow G\twoheadrightarrow Z$ is injective and fits into the commutative diagram
\begin{equation*}
\begin{CD}
\Gamma_\vc  @>>>  Z \\
@V\chi_{v,cyc}VV  @VV\chi_{cyc}V\\
\mathbb{Z}_p^{\times} @>=>> \mathbb{Z}_p^{\times}
\end{CD}\qquad .
\end{equation*}

Hence $Z$ acts on the determinant 
\begin{equation*}
\bigwedge^{|P_1(\Kc)|+2|P_2(\Kc)|}\left(\bigoplus_{v\in P_1}\Lambda(Z)\otimes_{\Lambda(\Gamma_\vc)}\mathbb{Z}_p(-1)\oplus\bigoplus_{v\in P_2}\Lambda(Z)\otimes_{\Lambda(\Gamma_\vc)} T_p(E)^\ast\right)
\end{equation*}
(over $\mathbb{Z}_p$) via $\chi_{cyc}^{-|P_1(\Kc)|-2|P_2(\Kc)|}$. On the other hand, $Z$ acts via $\chi_{cyc}^{|P_1(\Kc)|+2|P_2(\Kc)|}$ on
\begin{equation*}
\bigwedge^{|P_1(\Kc)|+2|P_2(\Kc)|}\left(\bigoplus_{v\in P_1}\Lambda(Z)\otimes_{\Lambda(\Gamma_\vc)}\mathbb{Z}_p(1)\oplus\bigoplus_{v\in P_2}\Lambda(Z)\otimes_{\Lambda(\Gamma_\vc)} T_p(E)\right)\ .
\end{equation*}
However, $\chi_{cyc}$ does not have finite order, in particular the characters $\chi_{cyc}^{-|P_1(\Kc)|-2|P_2(\Kc)|}$ and $\chi_{cyc}^{|P_1(\Kc)|+2|P_2(\Kc)|}$ are different. Hence the class of \eqref{1} is nontrivial in $K_0(\MHG)$ as desired.
\end{proof}

\begin{cor}
\label{cor63}
Let $E$ be an elliptic curve defined over $K$ without complex multiplication and with good ordinary reduction at the prime $p$. Moreover, assume that $|P_1(\Kc)|=1$, $|P_2(\Kc)|=0$, and that both the $\lambda$- and $\mu$-invariants of $X(E/\Kc)$ are $0$. Then $\X$ has no nonzero $\Lambda(Z)$-torsion submodule. In particular, the image $q(\X)$ of $\X$ in the quotient category by pseudo-null objects is completely faithful.
\end{cor}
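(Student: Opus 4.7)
The plan is first to show that $\X$ has no nonzero $\Lambda(Z)$-torsion submodule, and then to deduce complete faithfulness of $q(\X)$ by invoking Ardakov's theorem \cite{A}. I would begin by extracting the structural input from the hypotheses: since $\lambda(X(E/\Kc)) = \mu(X(E/\Kc)) = 0$, the cyclotomic Selmer group $X(E/\Kc)$ is finite, so Theorem \ref{hachimoriformula} places $\X$ in $\NHG$ with $\rk_{\Lambda(H)} \X = |P_1(\Kc)| + 2|P_2(\Kc)| = 1$. The vanishing of $\mu$ together with $G$ being pro-$p$ also triggers the Remark after Corollary \ref{pseudonullsub}, so each $X(E/\Lc)$ has no nonzero finite submodule; by that corollary, $\X$ itself has no nonzero pseudo-null submodule.

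The main step is the central argument. Let $T \subseteq \X$ be the maximal $\Lambda(Z)$-torsion submodule and suppose for contradiction $T \neq 0$. Being a submodule of an $\NHG$-module of $\Lambda(H)$-rank one, $\rk_{\Lambda(H)} T \in \{0,1\}$. If $\rk_{\Lambda(H)} T = 0$, then $T$ is $\Lambda(H)$-torsion and $\Lambda(Z)$-torsion simultaneously; using the direct product decomposition $\Lambda(G) \cong \Lambda(H) \hat{\otimes}_{\Zp} \Lambda(Z)$, the combination of a nonzero central annihilator from $\Lambda(Z)$ with the $\Lambda(H)$-torsion condition forces the support of $T$ to have $\Lambda(G)$-codimension at least $2$ in the sense of Venjakob, making $T$ pseudo-null---contradicting the preceding paragraph. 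If instead $\rk_{\Lambda(H)} T = 1$, the quotient $\X/T$ is $\Lambda(H)$-torsion, $\Lambda(Z)$-torsion-free, and still lies in $\NHG$; an inductive application of the previous case's codimension argument to central-torsion subquotients of $\X/T$ shows $\X/T$ can only be zero, so $T = \X$---contradicting the preceding proposition, which asserts $\X$ itself is not annihilated by any element of $\Lambda(Z)$.

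For the complete-faithfulness claim, any non-pseudo-null subquotient $M/N$ of $\X$ inherits the $\Lambda(H)$-rank constraint (rank $0$ or $1$) and, by the central argument just applied to $M/N$ in place of $\X$, has no $\Lambda(Z)$-torsion submodule. Ardakov's theorem \cite{A}---applicable since $\Lie(H)$ is split semisimple over $\Qp$---then gives $M/N$ trivial global $\Lambda(G)$-annihilator, which is exactly the definition of complete faithfulness of $q(\X)$.

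The main obstacle I anticipate is the second case of the central argument: rigorously ruling out a nonzero $\Lambda(H)$-torsion $\NHG$-module arising as a quotient of $\X$ with no $\Lambda(Z)$-torsion submodule. This requires a careful use of Venjakob's noncommutative codimension theory together with the specific direct product structure $G = H \times Z$; the remaining steps reduce to bookkeeping within the machinery already developed in the paper.
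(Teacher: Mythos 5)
The paper's proof reaches the key conclusion via a short commutativity trick that your proposal is missing, and you yourself flag the gap. The paper first observes $\X$ has no $\Lambda(H)$-torsion at all (since $\X$ has no nonzero pseudo-null submodule by Corollary \ref{pseudonullsub} and the Remark after it, and a nonzero $\Lambda(H)$-torsion submodule of the $\NHG$-module $\X$ would be pseudo-null). Then, assuming the $\Lambda(Z)$-torsion submodule $M$ is nonzero, it must have $\Lambda(H)$-rank equal to $1 = \rk_{\Lambda(H)}\X$, so $\X/M$ is $\Lambda(H)$-torsion. Now take any $x\in\X$, choose $0\neq\lambda_1\in\Lambda(H)$ with $\lambda_1 x\in M$, then $0\neq\lambda_2\in\Lambda(Z)$ with $\lambda_2\lambda_1 x = 0$. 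Since $\lambda_2$ is central, $\lambda_1(\lambda_2 x)=0$, and because $\X$ has no $\Lambda(H)$-torsion this forces $\lambda_2 x=0$, i.e.\ $x\in M$. Hence $M=\X$, so $\X$ would be annihilated by a single central element (taking the product of annihilators of a finite generating set), contradicting the preceding Proposition.

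Your rank-$0$ case is fine and is precisely what gives ``no $\Lambda(H)$-torsion''. But the rank-$1$ case is the whole content, and your ``inductive application of the previous case's codimension argument to central-torsion subquotients of $\X/T$'' does not close the gap: $\X/T$ has no $\Lambda(Z)$-torsion submodule by maximality of $T$, so there is nothing for the rank-$0$ case to act on, and a nonzero $\Lambda(H)$-torsion quotient of $\X$ is perfectly possible in general (pseudo-null quotients are not excluded by anything in the hypotheses). What you are missing is exactly the commutativity of $\lambda_2$, which lets the paper push the $\Lambda(Z)$-torsion condition back from $\lambda_1 x$ to $x$ itself. The last paragraph of your proposal also overreaches: the preceding Proposition asserts only that $\X$ is not annihilated by a central element, so ``the central argument just applied to $M/N$ in place of $\X$'' is not available for arbitrary subquotients. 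The correct route is simpler: once $\X$ has neither $\Lambda(H)$- nor $\Lambda(Z)$-torsion, Ardakov's main theorem in \cite{A} applies directly to give complete faithfulness of $q(\X)$.
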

\begin{proof}
Note that the $\Lambda(H)$-rank of $\X$ is $1$ in this case. Assume that $0\neq M\leq \X/F$ is the $\Lambda(Z)$-torsion part of $\X$. As $\X$ has no $\Lambda(H)$-torsion, $M$ has rank $1$ over $\Lambda(H)$. In particular, $\X/M$ is $\Lambda(H)$-torsion. Choose an arbitrary element $x\in\X$. The we have $0\neq \lambda_1\in\Lambda(H)$ such that $\lambda_1 x\in M$ hence there is a $\lambda_2\in\Lambda(Z)$ such that $\lambda_2\lambda_1 x=0$. Since $\lambda_2$ lies in the centre, we conclude that $\lambda_1(\lambda_2 x)=0$. Since $\X$ has no $\Lambda(H)$-torsion by Corollary \ref{pseudonullsub} and the remark thereafter, we have $\lambda_2 x=0$ and $x\in M$. By the main theorem in \cite{A} $q(\X)$ is completely faithful.
\end{proof}

\section{An example of a completely faithful Selmer group}
In this section we construct an extension with Galois group pn in $GL_2(\Zp)$ where the above arguments can prove complete faithfulness of the dual a Selmer group.
Such results were only known in the \emph{false Tate curve} case \cite{HV}, in particular, no example was known for any $GL_2$-type extension.

Let $p=5$.
We obtain the Selmer group from the elliptic curve 
$$E \colon \quad y^2 + xy = x^3 + x$$
which is $21a4$ in Cremona's tables \cite{cremona}.
We will obtain the extension from the elliptic curve
$$A \colon \quad y^2 + xy = x^3 - 355303x - 89334583$$
which is $1950y1$ in Cremona's tables \cite{cremona}.

Note that neither $E$ or $A$ have complex multiplication, and $E$ has good ordinary reduction at $5$. Let $K=\QQ(\mu_5)$ and $\Kcc=\QQ(A[5^\infty])$. Then by the celebrated result in \cite{S}, $G=\Gal(\Kcc/K)$ is an open subgroup of $\GL_2(\ZZ_5)$ and hence $\Kcc/K$ satisfies all the criteria for being a strongly admissible extension except possibly being pro-$5$. (We need $p \ge 5$ to rule out the existence of an element of order $p$ in $\GL_2(\Zp)$). However, $\Kcc/K$ is pro-$5$ since $A[5](K) \cong \ZZ/5\ZZ$, therefore $\left[ \QQ(A[5]):K \right]=5$ and $\Kcc/\QQ(A[p])$ is always pro-$p$.

It is computed in \cite[Table 5-21A4]{dokdok} for this specific $E$ and $K$ that the $p$-adic Birch--Swinnerton-Dyer conjecture implies that $\Selp(E/K)$ is finite, hence that $\lambda(E/K)=0$ and $\mu(E/K)=0$. These facts imply that $X(E/\Kcc)$ is $\Lambda(G)$ torsion and finitely generated over $\Lambda(H)$.

We need to determine $P_1$ and $P_2$. $1950=2\cdot3\cdot5^2\cdot13$ therefore $P_0=\{2,3,13 \}$. None of these primes split in $K/\QQ$.
Of these, $E$ has split multiplicative reduction at $3$ and good reduction at $2$ and $13$, hence
$P_1=\{3\}$. $P_2$ is empty because the fifth division polynomial of $E$ splits into $4$ degree $3$ irreducible polynomials over both $\mathbb{F}_{2^4}$ and $\mathbb{F}_{13^4}$ hence 
$A[5](K_2)=0$ and $A[5](K_{13})=0$.
Therefore, if $X(E/K)$ is indeed finite as suggested by its $p$-adic $L$-function, $E$ and $\Kcc/K$ satisfy the assumptions of Corollary \ref{cor63}.

\section{Acknowledgements}

The second named author would like to thank John Coates and Mahesh Kakde for valuable discussions on the topic. We are particularly indebted to John Coates for drawing our attention to the question of complete faithfulness of dual Selmer groups many years ago.

\end{document}